\newtheorem{thm}{Theorem}
\newtheorem{lem}[thm]{Lemma}
\newtheorem{cor}[thm]{Corollary}
\newtheorem{cond}[thm]{Condition}
\newcommand{\bl}{s}
\newcommand*{\addFileDependency}[1]{
	\typeout{(#1)}
	\@addtofilelist{#1}
	\IfFileExists{#1}{}{\typeout{No file #1.}}
}
\newcommand*{\myexternaldocument}[1]{%
	\externaldocument{#1}%
	\addFileDependency{#1.tex}%
	\addFileDependency{#1.aux}%
}
\begin{document}

	\title{Large-Sample Properties of Non-Stationary Source Separation for Gaussian Signals}

	\author{
			Fran\c{c}ois~Bachoc \\
		 Toulouse Mathematics Institute,		  University Paul Sabatier, France \\
		  ~ \\
		Christoph~Muehlmann \\
		Institute of Statistics \& Mathematical Methods in Economics, \\
		 Vienna University of Technology, Austria \\
		 ~ \\
		Klaus~Nordhausen \\
		Department of Mathematics and Statistics, \\
		 University of Jyv\"askyl\"a, Finland \\
		 ~ \\
		Joni~Virta  \\
		Department of Mathematics and Statistics, 		University of Turku, Finland
}

	\maketitle
	
	\begin{abstract}
		Non-stationary source separation is a well-established branch of blind source separation with many different methods. However, for none of these methods large-sample results are available. To bridge this gap, we develop large-sample theory for NSS-JD, a popular method of non-stationary source separation based on the joint diagonalization of block-wise covariance matrices. We work under an instantaneous linear mixing model for independent Gaussian non-stationary source signals together with a very general set of assumptions: besides boundedness conditions, the only assumptions we make are that the sources exhibit finite dependency and that their variance functions differ sufficiently to be asymptotically separable. The consistency of the unmixing estimator and its convergence to a limiting Gaussian distribution at the standard square root rate are shown to hold under the previous conditions. Simulation experiments are used to verify the theoretical results and to study the impact of block length on the separation.

		{\bf Keywords:} Blind source separation, block covariance matrix, consistency, joint diagonalization, limiting normality.
		
		{\it This work has been submitted to the IEEE for possible publication. 
			Copyright may be transferred without notice, after which this 
			version may no longer be accessible.}
	\end{abstract}

	\section{Introduction}\label{sec:intro}
	
	The linear blind source separation (BSS) model assumes that a set of $p$-dimensional signals $ X_t $ is an instantaneous linear mixture of a set of $p$ source signals $ Z_t $,
	\begin{equation}\label{eq:bss:model}
		X_t = A Z_t, \quad t \in \mathbb{N},
	\end{equation}
	where the parameter of interest is the \textit{mixing matrix} $A \in \mathbb{R}^{p \times p}$ (or, equivalently, the \textit{unmixing matrix} $ A^{-1} $) which is assumed to be invertible, see \cite{ComonJutten2010}. In practice one observes the series $X_t$ at the instances $1, \ldots , T$, for some $T \in \mathbb{N}$.

	Generally, the latent signals in $Z_t$ are assumed to exhibit a dependency structure that is simpler than the one existing between the observed signals in $X_t$ (often temporal uncorrelatedness or full independence). This assumption makes the model \eqref{eq:bss:model} especially attractive in modelling and prediction where the two tasks are made considerably easier after the estimation of the matrix $A$. This is because, by working with $ Z_t $, we avoid the need for modelling the dependencies between the variables, which would be necessary if we operated directly on the observed $ X_t $ \cite{ComonJutten2010,CichockiAmari2002,PhamCardoso2001}. Among the most common assumptions on the dependency structure of $ Z_t $ is second order stationarity coupled with the fact that the autocovariance structures of the signals are sufficiently different to be distinguishable from each other \cite{ComonJutten2010,PanMatilainenTaskinenNordhausen2022}. The resulting methodology is called second order source separation (SOS), incorporating classical methods such as AMUSE \cite{TongSoonHuangLiu1990} and SOBI \cite{BelouchraniAbedMeraimCardosoMoulines1997}, which are based on the diagonalization of autocovariance matrices, and also more recent ones, see, e.g., \cite{MiettinenNordhausenOjaTaskinen2014b,MiettinenMatilainenNordhausenTaskinen2020}.
	
	The stationarity (and ergodicity) of the series are convenient assumptions also with respect to large-sample statistics. Under them, the sample moments of the series can be expected to converge to their population values \cite{BrockwellDavis1991}, often a key requirement in studying the limiting behavior of unmixing estimators. In this work, we step outside of this standard asymptotic framework and develop large-sample theory under the model~\eqref{eq:bss:model} and the assumption of non-stationary sources. The resulting \textit{non-stationary source separation} (NSS) model is highly appealing in many applications, such as speech recognition, where the signals cannot be expected to be stationary, but rather piece-wise stationary \cite{nassif2019speech}  {or in a group independent component analysis (ICA) framework in which the data from $N$ subjects are concatenated. In group ICA it is assumed that model~\eqref{eq:bss:model} holds for each subject with the same mixing matrix $A$ but that the sources might have slightly different properties~\cite{PfisterWeichwaldBuehlmannSchoelkopf2019}}.
	
	Three standard methods of estimating $ A^{-1} $ in NSS are known as NSS-SD, NSS-JD and NSS-JD-TD, \cite{ChoiCichocki_2000_IEEE,choi2000blind,ChoiCichockiBelouchrani2001}. Each of the methods is based on dividing the total observed $ T $-length time series into $ K $ blocks and jointly diagonalizing a set of block-wise covariance or autocovariance matrices. More precisely, NSS-SD (simultaneous diagonalization) uses $ K = 2 $ blocks and simultaneously diagonalizes the block-wise covariance matrices of the two blocks. NSS-JD (joint diagonalization) instead first whitens the series using the global covariance matrix, and then jointly diagonalizes the block-wise covariance matrices of an arbitrary amount $ K $ of blocks. NSS-TD-JD (time-delayed joint diagonalization) is otherwise as NSS-JD, but includes also block-wise autocovariance matrices (for a suitable set of lags) in the joint diagonalization. Each method uses successively more information on the source series than the previous.  {NSS-SD and NSS-JD basically just need ordered observations but do not need serial dependence while NSS-TD-JD utilizes also information on the time dependency structure of the series and is usually considered in a block stationary framework. 
		
		In this work our focus is on the theoretical properties of the second method, NSS-JD, which is presented in more detail in Section \ref{sec:nss}. Our reason for focusing on NSS-JD is two-fold. On one hand, it is in most simulation studies, such as in \cite{ChoiCichocki_2000_IEEE,Nordhausen2014}, superior to NSS-SD and, on the other hand, it is more general than NSS-TD-JD which requires block stationary structures.
		
		Recent developments in non-stationary source separation include \cite{PhamCardoso2001,HsiehChien2011,Nordhausen2014, PfisterWeichwaldBuehlmannSchoelkopf2019,TehraniSameiJutten2020} which consider, for example, robust and Bayesian approaches or assume that not all components are non-stationary.}  {However, asymptotic considerations for NSS are so far still missing. Possibly the most methodological approach is given in \cite{PhamCardoso2001} which develops NSS approaches embedded in a Gaussian maximum likelihood framework and a Gaussian mutual information framework assuming independent observations. For these approaches either a block stationary model is assumed or some smooth function for the changing component variance needs to be modeled. It is also noteworthy that the choice of the number of blocks $K$ is hardly ever discussed and in most papers mentioned above $K$ is usually chosen in such a way that it contains 50 or 100 observations. Simulations in \cite{Nordhausen2014} indicate however that a sufficient number of blocks seems more relevant than the number of observations within a block.
		
		Taking the above considerations into account, the large-sample properties we develop are non-standard in the sense that, due to stating very weak structural assumptions on the source series, the limiting distributions of our estimators are not static, but instead change with the sample size $ T $.  
		Another key property of our framework is that we will not take the number of blocks $K$ as fixed but instead let $ K \rightarrow \infty $, proportional to $ T $.
		Our method of proof uses a parametrization of the space of orthogonal matrices through matrix exponentials of skew-symmetric matrices. This enables us to simplify the analysis of $ M $-estimators that are orthogonal matrices. This technique may prove useful also in other BSS problems. The previous points mean that extra layers of complexity arise in the theory, and as such, the derivation of our theoretical results is postponed to the online supplementary material.
		
		The structure of the paper is as follows. In Section \ref{sec:nss} we go over the non-stationary source separation model and recall one of the standard estimators for its unmixing matrix, NSS-JD. In Section \ref{sec:large:sample} we give our main results regarding the consistency and limiting normality of the estimator, along with the assumptions required for the results to hold. We also discuss the strictness of the assumptions. In Section \ref{sec:simulation} we apply the method to simulated data and demonstrate that the asymptotic results are representative of the finite sample behavior. In Section \ref{sec:conclusion} we close with a discussion of possible extensions for future study and, finally, the covariance matrix of the limiting distribution derived in Section \ref{sec:large:sample} is presented in Appendix \ref{sec:appendix}.
		
		\section{Non-stationary source separation}\label{sec:nss}
		
		In this section we review the non-stationary source separation model along with NSS-JD, a method for non-stationary source estimation. As described in Section \ref{sec:intro}, NSS-JD is based on dividing the total observed time series into blocks, and throughout the paper we assume, for convenience, that the blocks are of equal, fixed length, denoted in the following by $ \bl \geq 2 $. Hence, $T = K \bl$ throughout, and $K \to \infty$ with $\bl$ fixed in all the large-sample considerations. An extension to varying (but bounded) block lengths would be straightforward, but tedious in notation.
		
		\subsection{NSS model}\label{sec::nss_model}
		
		Recall from Section \ref{sec:intro} that we observe the instantaneous linear mixing model,
		\begin{align}\label{eq:nss:model}
			X_t = A Z_t, \quad t \in \{ 1, \ldots , K \bl \},
		\end{align}
		where $K \in \mathbb{N}$ is a positive integer and $ A \in \mathbb{R}^{p \times p} $ is invertible. In Section \ref{sec:large:sample} we will detail the exact assumptions for \eqref{eq:nss:model} that are required for our large-sample results to hold, in particular, the independence and Gaussianity of the source series. However, recall that no stationarity assumptions are made for $ Z_t $, meaning that both the mean and variance functions of $ Z_t $ are allowed to be non-constant and arbitrary (but bounded). Additionally, we will also postpone discussing the identifiability of the model parameters to Section \ref{sec:large:sample}. 
		
		Finally, note that we impose in \eqref{eq:nss:model}, for convenience, the assumption that the total observed time series length $ T = K \bl $ is a multiple of the fixed block length $ \bl $, meaning that we have exactly $ K $ blocks. This assumption is completely without loss of generality, as including a finite ``tail'' of observations, $ T = K \bl + r $, $ r \in \{ 0, \ldots \bl - 1 \} $, has no impact in the asymptotic regime $ K \rightarrow \infty $ we pursue in Section \ref{sec:large:sample}.
		
		\subsection{NSS-JD estimate of the unmixing matrix}\label{sec:nss_jd}
		
		To estimate the unmixing matrix $ A^{-1} $ we use NSS-JD which is based on the simultaneous diagonalization of block-wise covariance matrices. Let
		\begin{align}\label{eq:cov:x:1}
			\hat{\mathrm{cov}}_{X,i}
			= \frac{1}{\bl} \sum_{j=1}^\bl (X_{\bl(i-1)+j} - \bar{X}_i) (X_{\bl(i-1)+j} - \bar{X}_i)'
		\end{align}
		denote the covariance matrix of the $ i $th block of length $ \bl $ where $ \bar{X}_i = (1/\bl) \sum_{j = 1}^\bl X_{\bl(i-1)+j} $ is the sample mean vector of the $ i $th block, $ i \in \{ 1, \ldots , K \} $. The subscript $ X $ in $\hat{\mathrm{cov}}_{X,i}$ is used to differentiate from the analogous quantities defined in Section \ref{sec:large:sample} for the latent series $ Z_t $. Let further 
		\begin{align}\label{eq:cov:x:2}
			\hat{\bar{\mathrm{cov}}}_{X,K} = \frac{1}{K} \sum_{i=1}^K \hat{\mathrm{cov}}_{X,i}
		\end{align}
		denote the average block-wise covariance matrix over all $ K $ blocks. Note that in $ \hat{\bar{\mathrm{cov}}}_{X,K} $ the centering is done block-wise, and thus $ \hat{\bar{\mathrm{cov}}}_{X,K} $ is not equal to the usual sample covariance matrix of the full series where the centering is with respect to the global mean vector $ \bar{X} = (1/T) \sum_{i = 1}^T X_{i} $. This modification is necessary as our theoretical results are based on exploiting finite dependence within individual series, which global centering would break. 
		
		The NSS-JD estimate of the unmixing matrix is now found as
		\begin{align}\label{eq:nssjd_solution}
			\hat{W}_{X,K} = \hat{U}_{X,K} \hat{\bar{\mathrm{cov}}}_{X,K}^{-1/2}.
		\end{align}
		Here $ \hat{\bar{\mathrm{cov}}}_{X,K}^{-1/2} $ denotes the unique symmetric positive definite matrix satisfying $ \hat{\bar{\mathrm{cov}}}_{X,K}^{-1/2} \hat{\bar{\mathrm{cov}}}_{X,K} \hat{\bar{\mathrm{cov}}}_{X,K}^{-1/2} = I_p $, and $ \hat{U}_{X,K} $ is the \textit{joint diagonalizer} of the block-wise covariance matrices of the series whitened by the average block-wise covariance matrix $ \hat{\bar{\mathrm{cov}}}_{X,K} $. By joint diagonalizer, we refer to a solution of the following optimization problem,
		\begin{align}\label{eq:joint_diag_1}
			\hat{U}_{X,K} 
			\in
			\mathrm{argmax}_{ U \in \mathcal{O}_p } g(U)
			,
		\end{align}
		where $ \mathcal{O}_p $ is the set of $ p \times p $ orthogonal matrices, and
		\begin{align}\label{eq:joint_diag_2}
			g(U) = \sum_{i=1}^K
			||  
			\mathrm{diag}
			\left(
			U
			\hat{\bar{\mathrm{cov}}}_{X,K}^{-1/2}
			\hat{\mathrm{cov}}_{X,i}
			\hat{\bar{\mathrm{cov}}}_{X,K}^{-1/2}
			U'
			\right)
			||^2,
		\end{align}
		where $ \mathrm{diag}(A) $ denotes the diagonal matrix having the same diagonal elements as $ A $ and $||A||$ is the Frobenius norm of $A$, for any square matrix $A$. The set notation in \eqref{eq:joint_diag_1} is justified as the maximizer of $ g $ can never be unique since any optimal $ U $ can always have its rows permuted or their signs changed to produce a distinct optimal solution. Calling \eqref{eq:joint_diag_2} joint diagonalization is confirmed by the orthogonal invariance of the Frobenius norm. I.e.: maximizing $ g $ is equivalent to minimizing the sum of the squared off-diagonal elements of $ \hat{\bar{\mathrm{cov}}}_{X,K}^{-1/2}
		\hat{\mathrm{cov}}_{X,i}
		\hat{\bar{\mathrm{cov}}}_{X,K}^{-1/2} $ for $ i \in \{ 1, \ldots , K \} $. See \cite{IllnerMiettinenFuchsTaskinenNordhausenOjaTheis2015} for different algorithms for solving \eqref{eq:joint_diag_1}. In the simulations of Section \ref{sec:simulation} we use the standard algorithm based on Jacobi rotations \cite{Clarkson1988, BelouchraniAbedMeraimCardosoMoulines1997}.
		
		Given the unmixing estimate $ \hat{W}_{X,K} $, an estimate of the latent sources $Z_t$ is given by $ \hat{W}_{X,K} X_t $. In Section \ref{sec:large:sample} we show that this indeed provides a consistent estimate of the sources, under a general set of assumptions. Note finally, that the estimate produced by NSS-JD has the following invariance property \cite{Nordhausen2014,MiettinenTaskinenNordhausenOja2015}: Let $ L \in \mathbb{R}^{p \times p} $ be an arbitrary invertible matrix and let $ \hat{W}_{LX,K}, \hat{U}_{LX,K} $ etc. be defined as above but with $ X_t $ replaced by $ LX_t $. Then, the set of source estimates $ \{ \hat{W}_{X,K} X_t \} $, where $ \hat{U}_{X,K} $ goes through all maximizers \eqref{eq:joint_diag_1}, is equal to the set of source estimates $ \{ \hat{W}_{LX,K} L X_t \} $, where $ \hat{U}_{LX,K} $ goes through all maximizers of the equivalent of \eqref{eq:joint_diag_1} for $ LX_t $. As such, changing the coordinate system of the observations has no effect on the produced (set of) source estimates and we may, without loss of generality, restrict to $ A = I_p $ later on in the simulations in Section \ref{sec:simulation}.

		\section{Large-sample properties of NSS-JD}\label{sec:large:sample}
		
		We divide the discussion of the large-sample properties of NSS-JD into two parts, first going over the required assumptions and then stating the results on consistency and limiting normality.
		
		\subsection{Assumptions}\label{sec:assumptions}
		
		Assume that the observations obey the NSS model \eqref{eq:nss:model} and that we work in the asymptotic regime that $ K \rightarrow \infty $. Thus the block length $ \bl $ is kept fixed but the sample size increases by including more and more blocks in the joint diagonalization.
		
		The following list details the assumptions required for the consistency and the limiting normality of the NSS-JD estimator to hold. Along the assumptions we also discuss their intuitive meanings.
		
		\begin{cond} \label{cond:gaussian:independent}
			Denoting $Z_t = (Z_{t}^{(1)}, \ldots , Z_{t}^{(p)})'$, the latent sources $ Z_{t}^{(1)}, \ldots , Z_{t}^{(p)} $ are independent jointly Gaussian processes.
		\end{cond}
		As is common in the literature, we make, for technical convenience, the assumption of Gaussian sources. This in turn means that the latent sources, which are uncorrelated by definition, are also independent.
		
		\begin{cond} \label{cond:block:independence}
			There exists a fixed $L \in \mathbb{N}$ such that
			for any $i,j \in \{1, \ldots ,K\}$, $ |i - j| \geq L$, and $ k \in \{ 1, \ldots , p \} $, the vectors $( Z_{\bl(i-1)+a}^{(k)} )_{a=1,\ldots,\bl}$ and $( Z_{\bl(j-1)+a}^{(k)} )_{a=1,\ldots,s}$ are independent.
		\end{cond}
		Condition \ref{cond:block:independence} states that the latent series exhibit finite dependency. Note that the length of the dependency can be arbitrary (as long as its finite) without affecting the conclusions of subsequent Theorems \ref{theo:consistency} and \ref{theo:limiting_normality} but we expect that, the longer the memory, the larger the asymptotic variances of the estimators are.
		
		\begin{cond} \label{cond:mean:function}
			For any $i \in \{1,\ldots, K\}$ and $j \in \{1,\ldots,s\}$,  the mean vector of $Z_{\bl(i-1)+j}$ depends only on $i$.
		\end{cond}
		The simplest way to fulfill Condition \ref{cond:mean:function} is to simply assume a stationary mean for the latent vectors $ Z_t $, which is standard in the NSS literature \cite{CichockiAmari2002,PhamCardoso2001,PanMatilainenTaskinenNordhausen2022}. However, this is not necessary and if, e.g., the series are \textit{a priori} known to have block-wise constant means with some block length $ s_0 $, the block length $ s $ can be chosen to be equal to this, fulfilling Condition \ref{cond:mean:function}.  {This is for example natural to assume in a group ICA framework~\cite{PfisterWeichwaldBuehlmannSchoelkopf2019}.}
		
		\begin{cond} \label{cond:max:variance:bis}
			We have
			\[
			\sup_{t \in \mathbb{N}}
			\max_{ j=1, \ldots ,p }
			\mathrm{Var}
			\left(
			Z_{t}^{(j)}
			\right)
			\leq
			C,
			\]
			for some fixed $ C \in \mathbb{R} $.
		\end{cond}
		Condition \ref{cond:max:variance:bis} is a technical assumption that simply requires that the variance functions of the latent sources are bounded.
		This assumption is of course much weaker than assuming the sources to be stationary.
		Note that, without boundedness, even whitening would become asymptotically infeasible.
		
		Denote the population block-wise covariance matrices in the following by
		\begin{align}\label{eq:cov:z:1} 
			\mathrm{cov}_{Z,i}
			= \frac{1}{\bl} \sum_{j=1}^\bl \mathbb{E}\left[(Z_{\bl(i-1)+j} - \bar{Z}_i) (Z_{\bl(i-1)+j} - \bar{Z}_i)'\right]
		\end{align}
		with $\bar{Z}_i = (1/s) \sum_{j=1}^s Z_{\bl(i-1)+j}$,
		for $ i \in \{ 1, \ldots , K \} $, and their average by,
		\[ 
		\bar{\mathrm{cov}}_{Z,K}
		= \frac{1}{K } \sum_{i=1}^K \mathrm{cov}_{Z,i}.
		\]
		
		\begin{cond} \label{cond:asymptotically:distinct:eigenvalues}
			There exists a strictly increasing sequence $(i_k)_{k \in \mathbb{N}}$, such that $i_k \in \mathbb{N}$ for all $k \in \mathbb{N}$ and such that, with $N_K = \# \{ k \in \mathbb{N} ; i_k \leq K \}$,
			we have $\liminf  N_K /K >0$ as $K \to \infty$. There exists a fixed $\delta >0$, such that
			\[
			\inf_{ \substack{k \in \mathbb{N}  }}
			\min_{ \substack{ i,j=1,\ldots,p \\ i \neq j } }
			\left|
			\left [\mathrm{cov}_{Z,i_k} \right]_{i,i}
			-
			\left [\mathrm{cov}_{Z,i_k} \right]_{j,j}
			\right|
			\geq \delta
			\]
			and 
			\[
			\inf_{ \substack{k \in \mathbb{N}  }}
			\min_{ \substack{ i=1,\ldots,p  } }
			\left [\mathrm{cov}_{Z,i_k}\right]_{i,i}
			\geq \delta.
			\]
		\end{cond}
		Condition \ref{cond:asymptotically:distinct:eigenvalues} is what guarantees that the latent sources are asymptotically separable from each other, by requiring that within a positive fraction of blocks the sources have different enough variance structures to be distinguishable from each other. This condition can be interpreted as an extension of the requirement that the eigenvalues of a matrix are distinct (in order to produce a unique set of eigenvectors), to the case of growing number of matrices.
		
		Let us finally introduce a convention on $ Z_t $ to make the unmixing matrix more identifiable. 
		By Conditions \ref{cond:gaussian:independent} and \ref{cond:mean:function}, the matrix $ \bar{\mathrm{cov}}_{Z,K} $ is diagonal and has the average variances of the (empirically centered) sources as its diagonal elements. Now, the scales of the sources in the model \eqref{eq:nss:model} are confounded with the magnitudes of the columns of $ A $ (one may multiply any of the sources by $ \lambda \neq 0 $ and the corresponding columns of $ A $ by $ 1/\lambda $ without changing the model). Thus, without loss of generality, we fix $ \bar{\mathrm{cov}}_{Z,K} = I_p $ throughout the rest of the paper. Under Condition \ref{cond:asymptotically:distinct:eigenvalues}, this makes the unmixing matrix, for large enough $ n $, identifiable up to row permutation and multiplication by $ \pm 1 $.
		
		\subsection{Consistency and limiting normality}
		
		After Conditions \ref{cond:gaussian:independent}--\ref{cond:asymptotically:distinct:eigenvalues} and fixing the scales of the sources through $ \bar{\mathrm{cov}}_{Z, K} = I_p $, the ordering and the signs of the sources can still be chosen freely. This is usually thought to be acceptable in practice, as after the extraction of the sources, subsequent univariate analyses can be used to assess their relative importance. To accommodate this indeterminacy in the following results, let $ \mathcal{G}_p $ denote the set of all signed permutation matrices ($ p \times p $ matrices with a single $ \pm 1 $ in each row and column and rest of the elements zero).
		
		\begin{thm}\label{theo:consistency}
			Assume that Conditions \ref{cond:gaussian:independent}--\ref{cond:asymptotically:distinct:eigenvalues} hold.
			Then, for any sequence $\hat{W}_{X,K}$ of NSS-JD estimates defined in \eqref{eq:nssjd_solution} and \eqref{eq:joint_diag_1}, there exists a sequence $\hat{G}_K \in \mathcal{G}_p$ such that
			\[
			\hat{G}_K \hat{W}_{X,K}
			\underset{K \to \infty}{\overset{p}{\to}}
			A^{-1}.
			\] 
		\end{thm}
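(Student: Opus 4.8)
The plan is to treat $\hat{U}_{X,K}$ as an $M$-estimator maximizing a random criterion over the \emph{compact} group $\mathcal{O}_p$, and to prove consistency by comparing this criterion to a deterministic but $K$-dependent (``drifting'') population criterion whose maximizers are exactly the signed permutation matrices. Because NSS-JD is equivariant under invertible linear transformations of $X_t$ (the invariance property at the end of Section~\ref{sec:nss_jd}), and since we have normalized $\bar{\mathrm{cov}}_{Z,K}=I_p$, it suffices to prove the claim for $A=I_p$, so that $X_t=Z_t$, $\bar{\mathrm{cov}}_{X,K}=I_p$, and the target is $A^{-1}=I_p$. I would then split $\hat{W}_{X,K}=\hat{U}_{X,K}\,\hat{\bar{\mathrm{cov}}}_{X,K}^{-1/2}$ and treat the whitening factor and the orthogonal factor separately.

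First I would establish whitening consistency and reduce the criterion. The matrix $\hat{\bar{\mathrm{cov}}}_{Z,K}=\frac1K\sum_{i=1}^K\hat{\mathrm{cov}}_{Z,i}$ is an average of block statistics that, by Condition~\ref{cond:block:independence}, are $L$-dependent across blocks and, by Conditions~\ref{cond:gaussian:independent} and~\ref{cond:max:variance:bis}, have uniformly bounded means and variances; a Chebyshev bound in which only $O(K)$ covariance terms survive yields $\hat{\bar{\mathrm{cov}}}_{Z,K}\overset{p}{\to}\bar{\mathrm{cov}}_{Z,K}=I_p$, hence $\hat{\bar{\mathrm{cov}}}_{Z,K}^{-1/2}\overset{p}{\to}I_p$ by continuity of the matrix square root. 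Since $\frac1K\sum_i\|\hat{\mathrm{cov}}_{Z,i}\|^2=O_p(1)$ by the same boundedness, replacing the whitening factor by $I_p$ inside \eqref{eq:joint_diag_2} costs a uniform (in $U\in\mathcal{O}_p$) $o_p(1)$, so it suffices to analyze $M_K(U)=\frac1K\sum_{i=1}^K\|\mathrm{diag}(U\hat{\mathrm{cov}}_{Z,i}U')\|^2$. Setting $g^*_K(U)=\frac1K\sum_{i=1}^K\mathbb{E}\|\mathrm{diag}(U\hat{\mathrm{cov}}_{Z,i}U')\|^2$, pointwise $L^2$-convergence (Chebyshev plus $L$-dependence) together with stochastic equicontinuity on the compact set $\mathcal{O}_p$ (the summands are polynomials in the bounded-moment entries of $\hat{\mathrm{cov}}_{Z,i}$, with uniformly bounded average Lipschitz constants) gives $\sup_{U\in\mathcal{O}_p}|M_K(U)-g^*_K(U)|\overset{p}{\to}0$.

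The heart of the argument, and the step I expect to be hardest, is to show that $g^*_K$ is maximized exactly on $\mathcal{G}_p$ with a separation that is \emph{uniform in $K$}: $\sup_{U:\,d(U,\mathcal{G}_p)\ge\varepsilon}g^*_K(U)\le\max_{G\in\mathcal{G}_p}g^*_K(G)-\eta(\varepsilon)$ for some $\eta(\varepsilon)>0$ independent of $K$, where $d$ denotes, say, the Frobenius distance. The subtlety is that, because the block length $\bl$ is fixed, $\hat{\mathrm{cov}}_{Z,i}$ does not concentrate, so the \emph{variances} of its entries contribute a non-vanishing $O(1)$ term to $g^*_K$ and one cannot simply substitute the diagonal mean $\mathrm{cov}_{Z,i}$. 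By orthogonal invariance of the Frobenius norm, maximizing $g^*_K$ is equivalent to minimizing $\frac1K\sum_i\sum_{a\ne b}\mathbb{E}[(U\hat{\mathrm{cov}}_{Z,i}U')_{ab}^2]$, and writing $u_a'$ for the $a$th row of $U$ each term splits as $(u_a'\mathrm{cov}_{Z,i}u_b)^2+\mathrm{Var}(u_a'\hat{\mathrm{cov}}_{Z,i}u_b)$. Here Gaussianity and source independence (Condition~\ref{cond:gaussian:independent}) are essential: by Wick's theorem the entries of $\hat{\mathrm{cov}}_{Z,i}$ are pairwise uncorrelated and the variance terms \emph{cooperate} with the mean terms instead of competing with them. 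For $p=2$, with $U$ a rotation by $\theta$, $\lambda^{(i)}_c=[\mathrm{cov}_{Z,i}]_{cc}$, and $R^{(c)}_i$ the within-block covariance of source $c$, the off-diagonal contribution equals $\tfrac14\sin^2(2\theta)\big[(\lambda^{(i)}_1-\lambda^{(i)}_2)^2+\tfrac{2}{\bl^2}\|R^{(1)}_i-R^{(2)}_i\|^2\big]+\cos^2(2\theta)\,\mathrm{Var}(\hat{\mathrm{cov}}_{Z,i,12})$, and the identity $\mathrm{Var}(\hat{\mathrm{cov}}_{Z,i,11})+\mathrm{Var}(\hat{\mathrm{cov}}_{Z,i,22})-4\,\mathrm{Var}(\hat{\mathrm{cov}}_{Z,i,12})=\tfrac{2}{\bl^2}\|R^{(1)}_i-R^{(2)}_i\|^2\ge0$ forces the minimum onto the signed permutations $\theta\in\{0,\pi/2\}$. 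Condition~\ref{cond:asymptotically:distinct:eigenvalues} then supplies the uniform gap: on the positive fraction $N_K/K$ of blocks $i_k$ the mean term alone contributes at least $\delta^2$, so $\eta(\varepsilon)$ can be taken proportional to $(\liminf N_K/K)\,\delta^2$ times a factor depending on $\varepsilon$. The general-$p$ case follows the same pattern, reducing through a more laborious Wick computation to pairwise source contributions; equivalently, in the skew-symmetric matrix-exponential parametrization $U=G\exp(\Xi)$ used throughout our analysis, the leading quadratic form in $\Xi$ is positive definite uniformly in $K$. This computation I would relegate to the supplement.

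Finally, the pieces combine through a standard drifting-argmax lemma. Since $\hat{U}_{X,K}$ maximizes the criterion \eqref{eq:joint_diag_2}, which coincides with $M_K$ up to a uniform $o_p(1)$, the uniform convergence $M_K\to g^*_K$ together with the uniform separation forces $d(\hat{U}_{X,K},\mathcal{G}_p)\overset{p}{\to}0$. Letting $\hat{P}_K\in\mathcal{G}_p$ be a signed permutation nearest to $\hat{U}_{X,K}$ and setting $\hat{G}_K=\hat{P}_K'$ gives $\hat{G}_K\hat{U}_{X,K}\overset{p}{\to}I_p$; multiplying by $\hat{\bar{\mathrm{cov}}}_{X,K}^{-1/2}\overset{p}{\to}I_p$ yields $\hat{G}_K\hat{W}_{X,K}\overset{p}{\to}I_p=A^{-1}$, and the general $A$ follows by equivariance.
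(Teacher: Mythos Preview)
Your proposal is correct and follows essentially the same route as the paper: equivariance reduction to $A=I_p$, whitening consistency via $L$-dependent block averaging, uniform convergence of the sample criterion to its expectation over the compact set $\mathcal{O}_p$, and identification of the maximizers of the expected criterion as $\mathcal{G}_p$ with a uniform gap supplied by Condition~\ref{cond:asymptotically:distinct:eigenvalues}. The one place the paper is sharper is your ``more laborious Wick computation'' for general $p$: the supplement computes directly that $\mathbb{E}\big[(U_j'\hat{\mathrm{cov}}_{Z,i}U_j)^2\big]=(U_j'\,\mathrm{cov}_{Z,i}\,U_j)^2+\tfrac{2}{s^2}\sum_{a,b}(U_j'\,D_{Z,i}^{(a,b)}\,U_j)^2$, a sum of squared quadratic forms in \emph{diagonal} matrices, so the classical joint-diagonalization inequality (Theorem~3 in \cite{belouchrani1997blind}) delivers the uniform separation at $\mathcal{G}_p$ for all $p$ in one stroke, without any pairwise reduction.
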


		\begin{thm}\label{theo:limiting_normality}
			Assume that Conditions \ref{cond:gaussian:independent}--\ref{cond:asymptotically:distinct:eigenvalues} hold. Then, for any sequence $\hat{W}_{X,K}$ of NSS-JD estimates defined in \eqref{eq:nssjd_solution} and \eqref{eq:joint_diag_1}, there exists a sequence $\hat{G}_K \in \mathcal{G}_p$ such that, with $Q_{\hat{W}_{X,K} }$ the distribution of $ K^{1/2} ( \hat{G}_K \hat{W}_{X,K} - A^{-1} )$, we have
			\[
			d_w 
			\left( 
			Q_{\hat{W}_{X,K} } , \ \mathcal{N}(0, \Sigma_{ \hat{W}_{X,K} }) 
			\right)
			\underset{K \to \infty}{\overset{}{\to}}
			0,
			\]
			where $ d_w $ denotes a metric generating the topology of weak convergence on the set of Borel
			probability measures on Euclidean spaces, and the limiting covariance matrix $ \Sigma_{ \hat{W}_{X,K} } $ is bounded as $K \to \infty$. The exact form of $ \Sigma_{ \hat{W}_{X,K} } $ is given in Appendix~\ref{sec:appendix}.
		\end{thm}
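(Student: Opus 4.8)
The plan is to reduce to the identity mixing, linearize the estimator through the exponential parametrization of the orthogonal group, and then combine a triangular-array central limit theorem with a uniform-in-$K$ delta method; the non-standard ingredient is that the target Gaussian law moves with $K$, so convergence must be phrased in the metric $d_w$ rather than as ordinary weak convergence.

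First I would exploit the affine equivariance recalled at the end of Section~\ref{sec:nss}. Taking $L = A^{-1}$ gives $\hat W_{X,K} = G\,\hat W_{Z,K} A^{-1}$ for a signed permutation $G$, where $\hat W_{Z,K}$ is the estimator computed from $Z_t = A^{-1}X_t$; hence, choosing $\hat G_K$ appropriately, $K^{1/2}(\hat G_K \hat W_{X,K} - A^{-1}) = K^{1/2}(\tilde G_K \hat W_{Z,K} - I_p)\,A^{-1}$. Since right-multiplication by the fixed matrix $A^{-1}$ is Lipschitz and sends Gaussians to Gaussians, it suffices to prove the statement for $A = I_p$ and to transport the covariance through $A^{-1}$. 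Under Conditions~\ref{cond:gaussian:independent} and~\ref{cond:mean:function} the matrices $\mathrm{cov}_{Z,i}$ are diagonal and, after the scale fixing $\bar{\mathrm{cov}}_{Z,K} = I_p$, the population whitening is the identity and the population joint diagonalizer is a signed permutation. Theorem~\ref{theo:consistency} then lets me choose signed permutations so that, writing $\hat U_{Z,K} = \exp(\hat S_K)\,P_K$ with $P_K \in \mathcal{G}_p$ and $\hat S_K$ skew-symmetric, we have $\hat S_K \overset{p}{\to} 0$; this localization is what makes a Taylor expansion of the optimality conditions legitimate.

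The second step is the fluctuation analysis of the two random inputs: the whitening error $\hat{\bar{\mathrm{cov}}}_{Z,K} - I_p$ and the off-diagonal parts of the whitened block covariances $\hat{\mathrm{cov}}_{Z,i}$. Each relevant scalar is an average over the $K$ blocks of a quadratic form in jointly Gaussian variables with variances bounded by Condition~\ref{cond:max:variance:bis}. By Condition~\ref{cond:block:independence}, blocks at distance at least $L$ are independent, so these averages form an $L$-dependent triangular array. I would establish their joint asymptotic normality through a triangular-array central limit theorem for $L$-dependent summands (e.g.\ a big-block/small-block or Lyapunov argument), the Lyapunov condition being immediate because Gaussian quadratic forms with bounded variance have uniformly bounded moments of all orders. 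Because the per-block variances depend on $K$, the covariance of the Gaussian approximation depends on $K$, which is precisely why the conclusion is stated as $d_w(\,\cdot\,,\mathcal{N}(0,\Sigma_{\hat W_{X,K}})) \to 0$; Condition~\ref{cond:max:variance:bis} guarantees that this covariance stays bounded along the sequence.

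The third step is the linearization. Expanding the first-order optimality condition for $g$ at $\hat U_{Z,K} = \exp(\hat S_K)P_K$ and keeping leading order, each off-diagonal pair $(k,l)$ yields a scalar equation whose leading coefficient is $\tfrac1K\sum_{i=1}^K([\mathrm{cov}_{Z,i}]_{kk}-[\mathrm{cov}_{Z,i}]_{ll})^2$; by Condition~\ref{cond:asymptotically:distinct:eigenvalues} this is bounded below by a positive constant uniformly in $K$, so the linear system for the off-diagonal entries of $\hat S_K$ is invertible with a uniformly bounded inverse, while the symmetric part of $K^{1/2}(\tilde G_K\hat W_{Z,K}-I_p)$ is supplied by the expansion $\hat{\bar{\mathrm{cov}}}_{Z,K}^{-1/2} = I_p - \tfrac12(\hat{\bar{\mathrm{cov}}}_{Z,K}-I_p)+\cdots$. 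Assembling these pieces expresses $K^{1/2}(\tilde G_K\hat W_{Z,K}-I_p)$ as a bounded linear image of the Step-two statistics plus a remainder. I expect the main obstacle to be showing that this remainder is $o_P(1)$ \emph{uniformly in $K$} and that the Gaussian approximation of Step two survives composition with the bounded but $K$-dependent linear map in $d_w$: ordinary Slutsky and continuous-mapping reasoning is unavailable because the limiting law is not fixed, so the remainder bounds must be quantitative and the CLT must come with enough uniformity (a Lindeberg- or Berry--Esseen-type control) to close the argument. Once these uniform estimates are in place, linearity of the map yields a Gaussian approximation for $Q_{\hat W_{X,K}}$ with the stated, bounded covariance $\Sigma_{\hat W_{X,K}}$, transported through $A^{-1}$ as in Step one, completing the proof.
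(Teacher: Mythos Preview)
Your proposal is correct and follows essentially the same route as the paper: equivariance to reduce to $A=I_p$, exponential parametrization of $\mathcal{O}_p$ near the identity, Taylor expansion of the joint-diagonalization criterion, a triangular-array CLT for the $L$-dependent gradient, and Hessian invertibility from Condition~\ref{cond:asymptotically:distinct:eigenvalues}. Two small refinements are worth noting. First, the Hessian coefficient you write for the pair $(k,l)$ is incomplete: beyond $\tfrac1K\sum_i([\mathrm{cov}_{Z,i}]_{kk}-[\mathrm{cov}_{Z,i}]_{ll})^2$ there is an additional non-negative contribution from the within-block cross-covariances $D_{Z,i}^{(m,n)}$ (see the form of $H_{e,f}$ in Appendix~\ref{sec:appendix}); this only helps invertibility but does enter the exact $\Sigma_{\hat W_{X,K}}$. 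Second, you do not need Berry--Esseen-type uniformity for the $d_w$ statement: since the covariance matrices are bounded, the paper simply passes to subsequences along which they converge, applies an ordinary CLT for weakly dependent arrays, and recovers the full $d_w\to 0$ by the standard subsequence principle.
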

		
		In Theorems \ref{theo:consistency} and \ref{theo:limiting_normality}, the statement ``for any sequence $\hat{W}_{X,K}$'' refers to the fact that the maximizer of \eqref{eq:joint_diag_1} is not unique. As described in Section \ref{sec:intro}, the lack of stationarity and structural assumptions implies that the approximating distribution of $ \hat{W}_{X,K} $ is not static but instead evolves with $ K $. This requires us to express the result of Theorem \ref{theo:limiting_normality} using the metric $ d_w $, instead of the more standard convergence in distribution. See, e.g., the discussion in \cite[p. 393]{dudleyreal} for specific examples on the use of the metric $ d_w $. See also \cite{bachoc2019spatial}.
		
		We also remark that limiting normality of a random matrix means that its (row or column) vectorization  converges in distribution to a Gaussian vector. Furthermore, the asymptotic covariance matrix $ \Sigma_{ \hat{W}_{X,K} } $ has dimension $p^2 \times p^2$ and is convenient to express with quadruple indices, see Appendix~\ref{sec:appendix}.
		
		To conclude this section we point out that, as with the original NSS-JD, our variant of NSS-JD is a valid NSS method also for non-Gaussian data and processes. The assumption of Gaussianity in this work is solely made for the sake of deriving the former large-sample behavior of the estimator.


		\section{Simulation studies}\label{sec:simulation}
		
		\begin{figure*}[!t] 
			\centering 
			\includegraphics[width=0.95\linewidth]{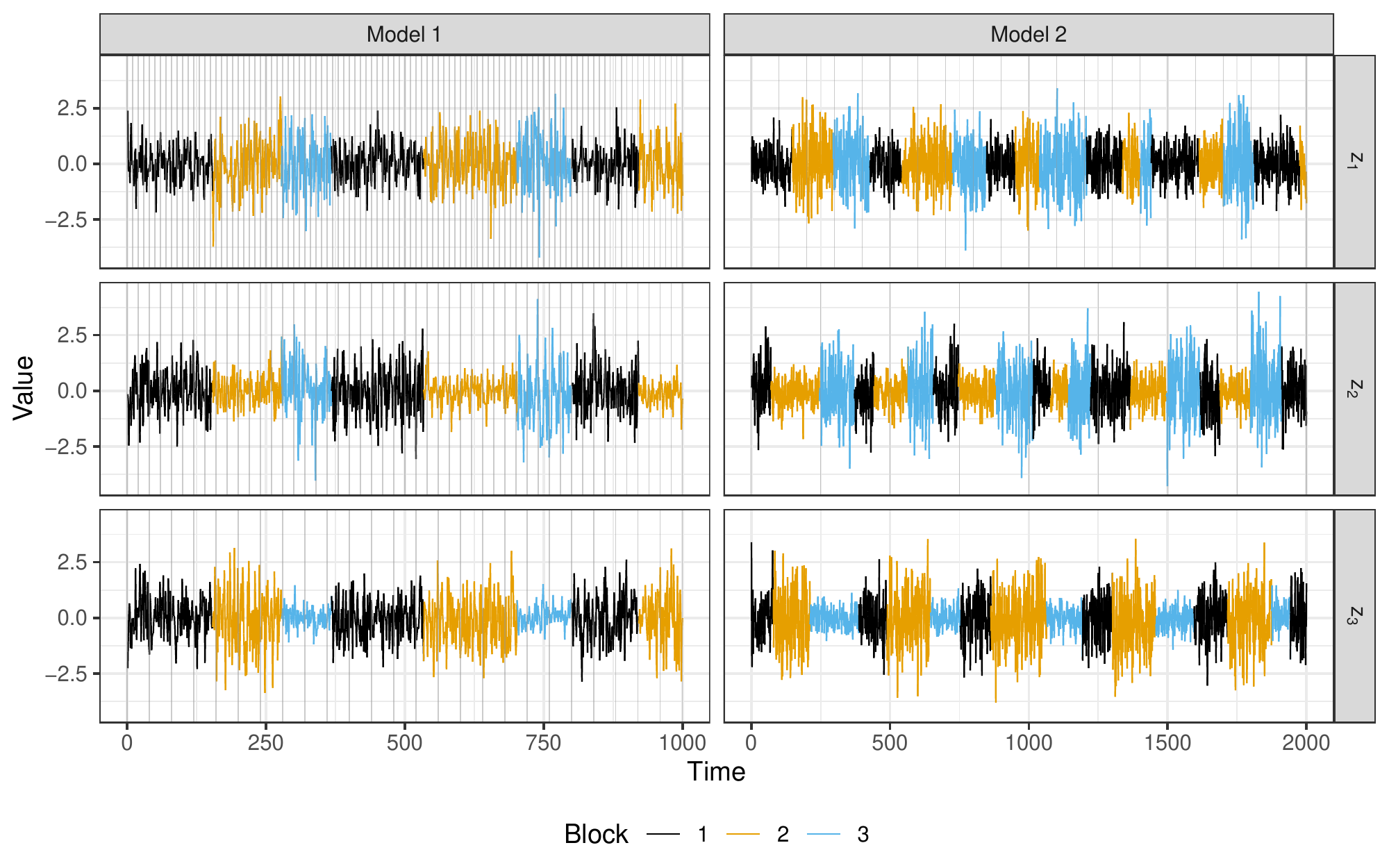} 
			\caption{Example latent time series for Model 1 of length 1000 and Model 2 of length 2000. The vertical gray lines depict the different choices for the block size of the estimator. Namely, $\bl$ = 10, 20 and 40 from top to bottom in the panels for Model 1 and $\bl$ = 100 and 250 for the first two panels of Model 2.}
			\label{fig:model_1_2} 
		\end{figure*}
		
		\begin{figure*}[!t] 
			\centering 
			\includegraphics[width=0.95\linewidth]{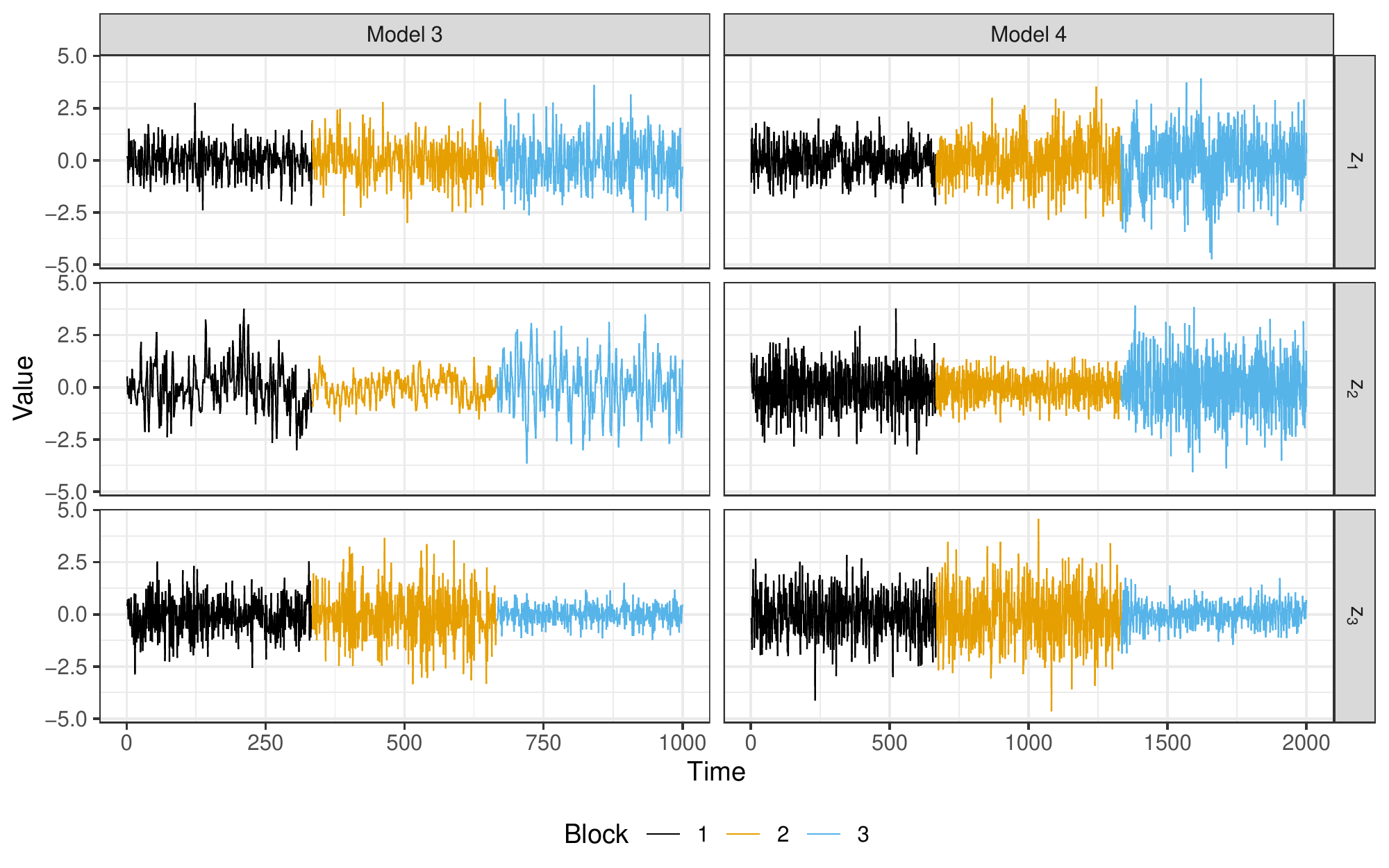} 
			\caption{Example latent time series for Model 3 of length 1000 and Model 4 of length 2000.}
			\label{fig:model_3_4} 
		\end{figure*}
		
		\begin{figure*}[!t] 
			\centering 
			\includegraphics[width=0.95\linewidth]{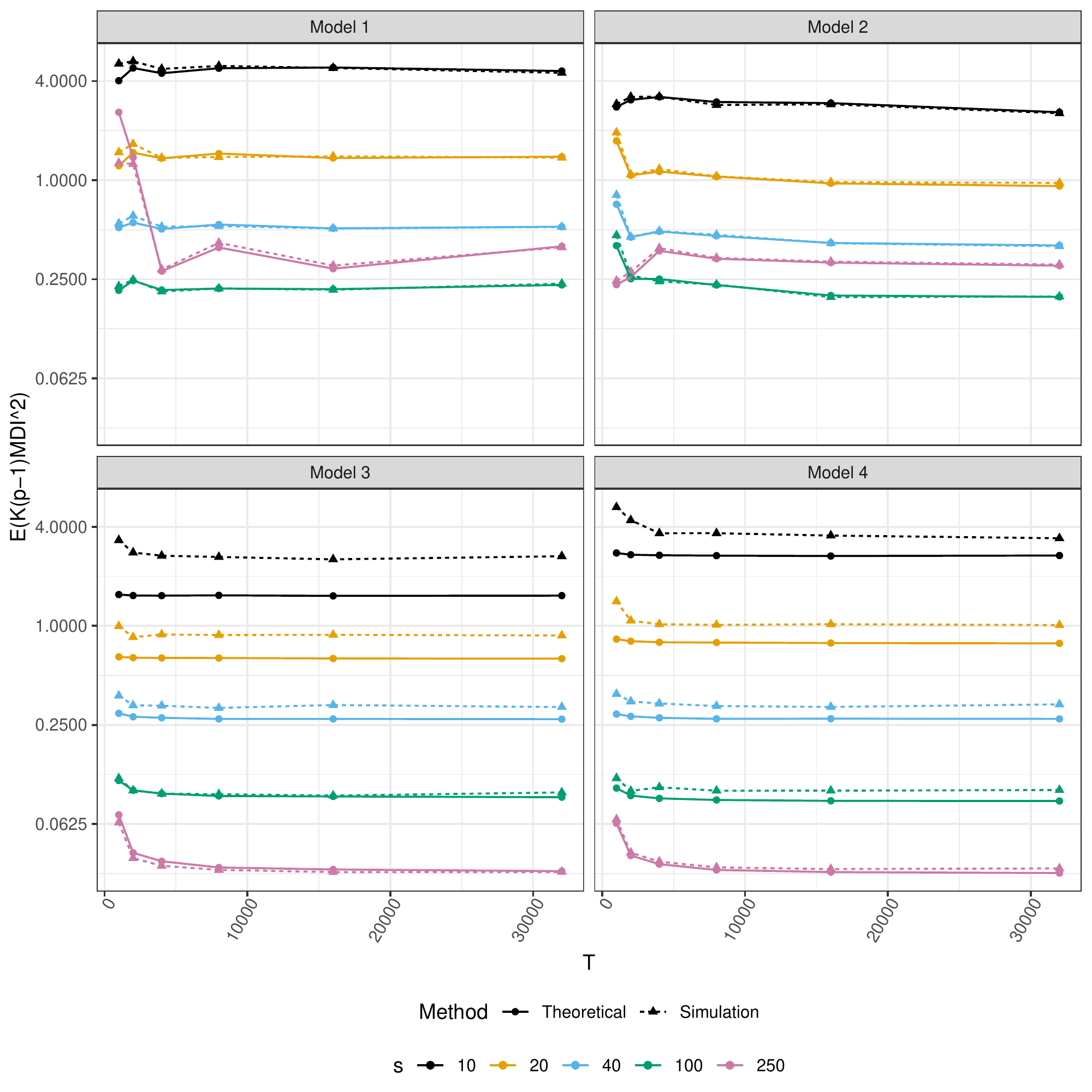} 
			\caption{Results for the expectation of the limiting distribution of the quantity $K(p-1) \text{MDI}(\hat{W}) ^ 2$ (theoretical curves) and simulations of the former quantity for all considered sample sizes, block sizes and models.}
			\label{fig:MDI} 
		\end{figure*}
		
		In this part of the paper we carry out an extensive simulation study to verify the derived asymptotic results. The simulation study is carried out in R 4.0.3 \cite{R_software} with the help of the package JADE \cite{JADE_package}.
		
		We consider Gaussian time series following the NSS model  {\eqref{eq:nss:model}} where the dimension of the time series $p$ equals $3$ and the mixing matrix is chosen to be the unit matrix, the latter choice is justified due to the affine equivariance property of the NSS estimator (details are given in  {Section~\ref{sec:nss_jd}}). The lengths of the time series equal $T=2^n 1000$ where $n=0,1,2,3,4,5$ and the block sizes are  {$\bl=10,20,40,100,250$}. For the latent time series we choose a total of four models: Models~1 and 2 are  {models with independent observations} (no serial dependence but time varying variances) and Models~3 and 4 are based on moving average processes with time varying innovation variance.
		
		Specifically, for Model~1 the latent time series is formed by concatenating differently sized blocks until the overall length $T$ is reached, the block lengths are independent samples from the negative binomial distribution  {$\mathcal{NB} (6, 1/20)$}. In each block the variances will be different for each component of the latent time series repeating itself every third block. For the first component the variances equal 1, 2 and 3, for the second component 3, 1 and 5 and for the third component 4, 7 and 1. I.e., the first block of the first component of the latent time series has variance 1, the second block has variance 2, the third block has variance 3, the fourth block again variance 1 and so fourth. Model~2 is equal to Model~1 where the only difference is given by the fact that the block sizes are randomly sampled for each component of the latent time series individually, again from  {$\mathcal{NB} (6, 1/20)$}. Figure~\ref{fig:model_1_2} depicts samples from these models, where the different blocks are highlighted by different colors.
		
		For Model~3 each component of the latent time series follows a moving average process where the coefficients are  {$(0.9, -0.8, 0.3, -0.5)$} for the first component,  {$(0.8, 0.2, 0.3)$} for the second component and  {$(-0.6, 0.7, 0.1)$} for the third one. For the innovations of the  {$\mathrm{MA}$} processes the latent time series is split into three equally-sized blocks, the variances of the innovations are chosen to be  {the} ones of Model~1 for each of the three blocks. Thus, Model~3 can be viewed as a  block stationary model in the sense that in each block the time series is weakly stationary. Model~4 follows the same principle as Model~3, only the  {$\mathrm{MA}$} processes for the latent time series are chosen to be  {$\mathrm{MA}(40)$, $\mathrm{MA}(50)$ and $\mathrm{MA}(60)$} where the coefficients for the processes are determined by one iid sample from the uniform distribution $U(-1,1)$. Figure~\ref{fig:model_3_4} illustrates samples from Model~3 and Model~4. Note that for all of the four models the time series are scaled such that they fulfill the unit covariance condition  {(i.e., $ \bar{\mathrm{cov}}_{Z,K} = I_p$, see the end of Section~\ref{sec:assumptions})}. 
		
		For an unmixing matrix $\hat{W}$ estimated by the NSS method, an indicator for the quality of the estimation can be based on the fact that $\hat{W} A \approx I_p$ up to the model indeterminacies of sign and permutation (the scale is already fixed). One quantity that is based on the former consideration is the minimum distance index (MDI) \cite{IlmonenNordhausenOjaOllila2010,LietzenVirtaNordhausenIlmonen2020}. The MDI is defined by
		\[
		\text{MDI}(\hat{W}) = \frac{1}{\sqrt{p-1}} \inf_{G \in \mathcal{G}_p} \| G \hat{W}  A -  I_p \|_F .    
		\]
		
		For a perfect separation it holds that $\hat{W} A = I_p$ (up to sign and permutation) which leads to an MDI of zero (lower limit), as the signal separation gets worse the MDI approaches its upper limit of one. Furthermore, for an estimator that follows a central limit theorem in the sense  {of Theorem~\ref{theo:limiting_normality},} the adapted MDI $K(p-1) \text{MDI}(\hat{W}) ^ 2$ converges in distribution to $\sum_{i=1}^m \delta_i \chi^2_i$, where $\chi^2_i, i=1,\ldots,m,$ are independent chi-squared random variables with one degree of freedom, and $\delta_i, i=1,\ldots,m,$ are the $m$ non-zero eigenvalues of some matrix dependent on  {the asymptotic covariance matrix of the estimator} (details can be found for example in \cite{IlmonenNordhausenOjaOllila2010}). This result leads to the fact that the expectation of the limiting distribution  {of the MDI} is given by the sum of all off-diagonal elements of  {$\Sigma_{ \hat{W}_{X,K} }$ from Theorem~\ref{theo:limiting_normality}}. Therefore, the asymptotic efficiency of the NSS method can be conveniently characterized by two numbers, namely, the expectation of the limiting distribution of the adapted MDI $K(p-1) \text{MDI}(\hat{W}) ^ 2$ (based on the limiting covariance matrix $\Sigma_{ \hat{W}_{X,K} }$) versus the average value of the adapted MDI based on several simulation repetitions. Figure~\ref{fig:MDI} illustrates these two numbers for all combinations of sample sizes, block sizes and models. The involved expectations  {in the expression of $\Sigma_{ \hat{W}_{X,K} }$ in Appendix~\ref{sec:appendix} for} the theoretical curves are based on 100000 Monte Carlo simulations and the simulated curves are based on 2000 repetitions.  {From the simulation results (Figure~\ref{fig:MDI}) we conclude the following points:}
		{\begin{itemize}
				\item For Model~1 and Model~2 the experimental lines (dashed) and the theoretical ones (solid) agree perfectly. For block length $s = 10,20,40,100$ the performance is stable and increasing with larger block size. However, for the largest block size $s=250$ the performance is less stable and is also worse than for $s=100$.
				\item For Model~3 and Model~4 convergence of the finite sample performance to the expected asymptotic level is much slower and only achieved for $s\geq 100$. However for all block lengths $s$ considered, the performance is stable and improves with increasing block length. Note also that in general a better separation seems possible in Model~3 and Model~4 compared to the other models.
			\end{itemize}
			Therefore we can conclude from our simulation study that the block length has a significant effect. It seems important that within a block the effective sample size is sufficient to estimate the covariance matrices with enough precision and therefore in cases with no or little dependence short block lengths are acceptable while with increasing dependence in the data the block lengths should be larger. However, the blocks should also not be made too large as then, it seems that there are not enough blocks to capture the non-stationarity features and thus the performance starts to suffer. This behavior is seen in Model~1 and Model~2. Moreover, it is worth noting that when the information within a block is sufficient, the convergence to the asymptotic limit is reached already for quite low sample sizes. 
	}}
	
	\section{Conclusion}\label{sec:conclusion}
	
	In this paper, we studied the large-sample properties of NSS-JD, a method of non-stationary source separation, under the unconventional asymptotic framework that the number of blocks grows without bounds, $ K \rightarrow \infty $, while the block size is kept fixed. Both consistency and limiting normality were shown to hold for the NSS-JD unmixing estimator under this framework.
	
	Although Conditions \ref{cond:gaussian:independent}--\ref{cond:asymptotically:distinct:eigenvalues} pose rather light restrictions on the source signals, extensions to at least two directions may prove feasible. First, while the assumption of Gaussian signals is a standard one in NSS, in applications such as finance more heavy-tailed distributions might prove a better choice. To accommodate this, the latent signals could be assumed to have block-wise elliptical distributions
	, the family of elliptical distributions preserving some key properties of the Gaussian family used in proving Theorems \ref{theo:consistency} and \ref{theo:limiting_normality}. Second, the finite dependency imposed by Condition \ref{cond:block:independence} could be replaced by assuming, e.g., exponentially decaying second-order dependence. In the spatial statistics literature, it is indeed common to consider (stationary) covariance functions that are not compactly supported but decrease exponentially fast to zero with the distance \cite{bachoc2014asymptotic,mardia1984maximum,bachoc2018asymptotic}. Some of the proof techniques used in these latter references could be beneficial to alleviate the finite dependency condition in our setting.

	Besides simply extending the method, an interesting follow-up to the current work would be to combine NSS-JD with latent dimension estimation. Namely, the BSS-model \eqref{eq:bss:model} is often combined with the assumption that the majority of the sources are noise, and the objective is to estimate only the non-noise sources, leading into a form of dimension reduction. In the NSS context, an appropriate definition of ``noise'' would be to define all second-order stationary sources to be noise, since they do not exhibit any changes in volatility over time. To separate the noise sources from the signal sources, note that, for the block-wise covariance matrices of the sources,
	the diagonal elements corresponding to the noise series are constant in expectation over the blocks.  {Thus, the sample variances of the eigenvalues over all blocks could be used to construct an asymptotic hypothesis test for the null hypothesis that some particular index of sources is noise. Similar strategies have been used for latent dimension estimation in unsupervised dimension reduction of iid data \cite{NordhausenOjaTylerVirta2017,LuoLi2016,NordhausenOjaTyler2022}, and second-order source separation \cite{MatilainenNordhausenVirta2017,VirtaNordhausen2021}. Some first steps in this direction in an NSS context are made in \cite{TehraniSameiJutten2020,SameniJutten2021}. This is also closely connected to stationary subspace analysis (SSA) where the goal is to separate the stationary subspace of a multivariate time series from its non-stationary subspace \cite{BunauMeineckeKiralyMuller:2009}.}

	\appendix

	\section{Limiting covariance matrix of NSS-JD}\label{sec:appendix}
	In Appendix \ref{sec:appendix} we give the expression for the limiting covariance matrix $ \Sigma_{ \hat{W}_{X,K} } $ used in Theorem \ref{theo:limiting_normality}. The expression is based jointly on the results of Lemmas \ref{supp:lem:equi:gradient} and \ref{supp:lem:non:zero:hessian}, Theorems \ref{supp:thm:TCL}, \ref{supp:thm:joint:CLT:Z} and \ref{supp:thm:CLT:non:zero:mean} and Corollary \ref{supp:cor:CLT:X} given in the online supplementary material.
	
	In the following, let $\mathcal{S}_k$ be the set of $k \times k$ skew-symmetric matrices (for $M \in \mathcal{S}_k$, $M' = -M $) and let 
	\[
	\mathcal{U}_k = 
	\{ V = (V_{i,j})_{1 \leq i<j \leq k} ; V_{i,j} \in \mathbb{R} ~  \mbox{for} ~ 1 \leq i<j \leq k \}.
	\]
	We let $S: \mathcal{U}_k \to \mathcal{S}_k$ be defined, for $V \in \mathcal{U}_k$, as $S(V)_{i,i} = 0$, $S(V)_{i,j} = V_{i,j}$ and $S(V)_{j,i} = -V_{i,j}$ for $1 \leq i<j \leq k$. 
	
	Let $ \hat{\mathrm{cov}}_{Z,i} $ and $ \hat{\bar{\mathrm{cov}}}_{Z,K} $ be defined as the corresponding quantities in \eqref{eq:cov:x:1} and \eqref{eq:cov:x:2}, but with the series $ Z_t $ in place of $ X_t $. Let us further write $\hat{C}_i = \hat{\mathrm{cov}}_{Z,i} $, $ \hat{T} = 
	-
	(1/2)
	\left( 
	\hat{\bar{\mathrm{cov}}}_{Z,K}
	-
	I_p
	\right) $ and $ \hat{Q}_{i , jk} = \hat{C}_i
	e_j
	e_k'
	\hat{C}_i$ where $ e_j $ is the $ j $th standard basis vector of $ \mathbb{R}^p $. Define then, for $1 \leq  j < k \leq p$, the elements of $ \bar{\nabla}_0 \in \mathcal{U}_p $ as,
	\begin{align*}
		& [\bar{\nabla}_0]_{jk} \\
		= &
		-4
		\frac{1}{K}
		\sum_{i=1}^K
		e_k'
		\hat{Q}_{i , kk}
		e_j
		- 8
		e_k'
		\hat{T}
		\left(
		\frac{1}{K}
		\sum_{i=1}^K
		\mathbb{E}
		\left[
		\hat{Q}_{i , kk}
		e_j
		\right]
		\right)
		\\
		- &
		4
		e_k'
		\hat{T}
		\left(
		\frac{1}{K}
		\sum_{i=1}^K
		\mathbb{E}
		\left[
		\hat{Q}_{i , jk}
		e_k
		\right]
		\right)
		-
		4
		\left(
		\frac{1}{K}
		\sum_{i=1}^K
		\mathbb{E}
		\left[
		e_k'
		\hat{Q}_{i , kk}
		\right]
		\right)
		\hat{T}
		e_j
		\\
		+ &
		4
		\frac{1}{K}
		\sum_{i=1}^K
		e_j'
		\hat{Q}_{i , jj}
		e_k
		+ 8
		e_j'
		\hat{T}
		\left(
		\frac{1}{K}
		\sum_{i=1}^K
		\mathbb{E}
		\left[
		\hat{Q}_{i , jj}
		e_k
		\right]
		\right)
		\\
		+ &
		4
		e_j'
		\hat{T}
		\left(
		\frac{1}{K}
		\sum_{i=1}^K
		\mathbb{E}
		\left[
		\hat{Q}_{i , kj}
		e_j
		\right]
		\right)
		+ 4
		\left(
		\frac{1}{K}
		\sum_{i=1}^K
		\mathbb{E}
		\left[
		e_j'
		\hat{Q}_{i , jj}
		\right]
		\right)
		\hat{T}
		e_k. 
	\end{align*}
	Further, let $\Sigma_{\nabla}$ be the covariance matrix of $ K^{1/2} \bar{\nabla}_0$.
	
	Let $ C_i = \mathrm{cov}_{Z,i} $ from \eqref{eq:cov:z:1} and define, for $i=1, \ldots ,K$ and $a,b=1, \ldots, s$, $D_{Z,i}^{(a,b)}$ to be the $p \times p$ diagonal matrix with diagonal elements given as
	\[
	\left[
	D_{Z,i}^{(a,b)}
	\right]_{k,k}
	=
	\mathbb{E}
	\left[
	\left( Z_{\bl(i-1)+a}^{(k)} - \bar{Z}_i^{(k)} \right)
	\left( Z_{\bl(i-1)+b}^{(k)} - \bar{Z}_i^{(k)} \right)
	\right],
	\]
	where $ \bar{Z}_i^{(k)} = (1/s) \sum_{j=1}^s Z^{(k)}_{\bl(i-1)+j}$ is the $ k $th element of the mean vector $ \bar{Z}_i $ of the $ i $th block. Using the previous, define, for any $e,f=1,\ldots,p$, $e \neq f$, the quantity,
	\begin{align*}
		H_{e,f}
		=&
		\frac{4}{K}
		\sum_{i=1}^K
		\left(
		[C_i]_{e,e}
		-
		[C_i]_{f,f}
		\right)^2 \\
		+&
		\frac{8}{K}
		\sum_{i=1}^K
		\frac{1}{s^2} 
		\sum_{m,n=1}^s
		\left(
		\left[
		D_{Z,i}^{(m,n)}
		\right]_{e,e}
		-
		\left[
		D_{Z,i}^{(m,n)}
		\right]_{f,f}
		\right)^2.
	\end{align*}
	
	Let then $\Sigma_{\hat{U}}$ be the $p^2 \times p^2$, quadruple-indexed covariance matrix, with its $ (e,f),(g,h) $th element, $ e,f,g,h =1, \ldots ,p$, defined as,
	\begin{equation*}
		\begin{cases}
			0
			&
			~ ~ \mbox{if} ~ ~
			e=f ~ \mbox{or} ~ g=h
			\\
			\frac{1}{H_{e,f} H_{g,h}}
			[\Sigma_{\nabla}]_{(e,f),(g,h)}
			&
			~ ~ \mbox{if} ~ ~
			e < f \; , \; g < h
			\\
			\frac{1}{H_{e,f} H_{g,h}}
			\left( - [\Sigma_{\nabla}]_{(e,f),(h,g)} \right)
			&
			~ ~ \mbox{if} ~ ~
			e < f \; , \; g > h
			\\
			\frac{1}{H_{e,f} H_{g,h}}
			\left( - [\Sigma_{\nabla}]_{(f,e),(g,h)} \right)
			&
			~ ~ \mbox{if} ~ ~
			e > f \; , \; g < h
			\\
			\frac{1}{H_{e,f} H_{g,h}}
			[\Sigma_{\nabla}]_{(f,e),(h,g)}
			&
			~ ~ \mbox{if} ~ ~
			e > f \; , \; g > h.
		\end{cases}
	\end{equation*}
	
	Let then $\bar{\mathcal{E}}^{-1}$ be the linear transformation on $\mathcal{U}_p$ defined, for $1 \leq e < f \leq p$, by,
	
	\[
	(\bar{\mathcal{E}}^{-1}[V])_{e,f}
	=
	H_{e,f}^{-1} V_{e,f}.
	\]
	Construct the covariance matrix,
	\begin{align}\label{eq:w:z:k}
		\Sigma_{ \hat{W}_{Z,K} }
		=
		\Sigma_{ \hat{U} }
		+
		\frac{1}{4}
		\Sigma_{\hat{\bar{\mathrm{cov}}}_{Z,K}}
		-
		\Sigma_{\text{cross}},
	\end{align}
	where $\Sigma_{\hat{\bar{\mathrm{cov}}}_{Z,K}}$ is the $p^2 \times p^2$ covariance matrix of $ K^{1/2} (\hat{\bar{\mathrm{cov}}}_{Z,K} - I_p)$ and $ \Sigma_{\text{cross}} $ is the cross covariance matrix between $S
	(
	\bar{\mathcal{E}}^{-1}
	[
	\sqrt{K} \bar{\nabla}_0 
	]
	)$ and $ K^{1/2} (\hat{\bar{\mathrm{cov}}}_{Z,K} - I_p)$. Finally, the desired matrix $\Sigma_{\hat{W}_{X,K}}$ is then the $p^2 \times p^2$ covariance matrix of the random matrix 
	\[
	M A^{-1},
	\]
	where $M$ is a $p \times p$ random matrix with $p^2 \times p^2$ covariance matrix $\Sigma_{\hat{W}_{Z,K}}$ given in \eqref{eq:w:z:k}, and $ A^{-1} $ is the true unmixing matrix.

	\section*{Acknowledgment}
	
	The work of CM and KN was supported by the Austrian Science Fund P31881-N32. The work of JV was supported by Academy of Finland, Grant 335077.

	\newpage

	\section{Organization of the supplementary material}
	
	We first prove Theorems  \ref{theo:consistency} and \ref{theo:limiting_normality} in a simplified setting where the multivariate time series $Z$ is centered and where no centering is performed when computing the matrices $\hat{\mathrm{cov}}_{X,i}$. This simplified setting is described in Section \ref{supp:section:setting:notation}. Under this simplified setting, we first consider the case where the blind source separation procedure is applied directly to $Z$, so that the target unmixing matrix is $I_p$. This case is also described in Section \ref{supp:section:setting:notation}. Then, the consistency result (Theorem  \ref{theo:consistency}) is proved in Theorem \ref{supp:thm:consistency} and the asymptotic normality result (Theorem  \ref{theo:limiting_normality}) is proved in Theorem \ref{supp:thm:joint:CLT:Z}.
	
	These two results on $Z$ imply similar results on $X$, thanks to an equivariance property given in Lemma \ref{supp:lem:equivariance}. Thus, Corollary \ref{supp:cor:consistency:X} provides the consistency of the spatial blind source separation procedure to $A^{-1}$ (Theorem  \ref{theo:consistency}) and Corollary \ref{supp:cor:CLT:X} provides the asymptotic normality (Theorem  \ref{theo:limiting_normality}). In these two corollaries, the simplified setting of a zero mean and where no centering is performed is still considered.
	
	Finally, in Section \ref{supp:section:non-zero:mean}, we show that the results in this simplified setting actually imply the results in the setting described in the main body of the paper (non-zero means and centering when computing the matrices $\hat{\mathrm{cov}}_{X,i}$). This is stated in Theorem \ref{supp:thm:CLT:non:zero:mean}, that shows that the conclusions of Corollaries \ref{supp:cor:consistency:X} and \ref{supp:cor:CLT:X} still hold. Hence, Theorem \ref{supp:thm:CLT:non:zero:mean} and Corollaries \ref{supp:cor:consistency:X} and \ref{supp:cor:CLT:X} jointly provide the proofs of Theorems  \ref{theo:consistency} and \ref{theo:limiting_normality}.
	
	In this supplementaty material, we may repeat notation and conditions from the main body of the paper, for a more self contained and easier to read document.
	
	\section{Setting and notation} \label{supp:section:setting:notation}
	
	For a $r \times r$ matrix $M$, we let $\mathrm{diag}(M)$ be the matrix obtained by setting all the non-diagonal elements of $M$ to zero. For $i=1,...,r$, we let $M_i'$ be the $i$-th row of $M$. When $N$ is also a $r \times r$ matrix, we let $M \odot N$ be defined by $[ M \odot N]_{ij} = M_{ij} N_{ij}$ for $1 \leq i,j \leq r$.
	
	Let $||M||^2 = \sum_{a,b=1}^r M_{a,b}^2$. Let $\rho_{sup}(M)$ be the largest singular value of $M$.
	If $M$ is symmetric, we let $\lambda_{inf}(M)$ and $\lambda_{sup}(M)$ be its smallest and largest eigenvalues.
	If $M$ is symmetric non-negative definite, we let $M^{1/2}$ be the unique symmetric non-negative definite matrix $N$ satisfying $N^2 = M$.  We let $\mathcal{G}_r$ be the set of $r \times r$ matrices $G$ such that there exist a permutation $\sigma$ on $\{1,...,r\}$ and $s_1,...,s_r \in \{-1,1\}$ such that, for any $r \times 1$ vector $v$, for any $i=1,...,r$, $[Gv]_i = s_i v_{\sigma(i)}$.
	For a $r$-dimensional vector $v$, we let $||v||_l^l = \sum_{a=1}^r |v_a|^l$. We let $\# E$ denote the cardinality of a finite set $E$.  We let $e_a$ be the $a$-th base column vector of $\mathbb{R}^k$ for some $k \in \mathbb{N}$, where the value of $k$ will be clear from context. 
	
	For any $k \in \mathbb{N}$, let $\mathcal{M}_k$ be the set of $k \times k$ real matrices.
	We let $\mathcal{O}_k$ be the set of $k \times k$ real orthogonal matrices (for $M \in \mathcal{O}_k$, $M'M = I_k$). We let $\mathcal{S}_k$ be the set of $k \times k$ skew symmetric matrices (for $M \in \mathcal{S}_k$, $M' = -M $). We let 
	\[
	\mathcal{U}_k = 
	\{ V = (V_{i,j})_{1 \leq i<j \leq k} ; V_{i,j} \in \mathbb{R} ~  \mbox{for} ~ 1 \leq i<j \leq k \}.
	\]
	We let $S: \mathcal{U}_k \to \mathcal{S}_k$ be defined by, for $V \in \mathcal{U}_k$, $S(V) \in \mathcal{S}_k$ is defined by $S(V)_{i,i} = 0$, $S(V)_{i,j} = V_{i,j}$ and $S(V)_{j,i} = -V_{i,j}$ for $1 \leq i<j \leq k$. 
	Let $\exp: \mathcal{M}_k \to \mathcal{M}_k$ be the matrix exponential function (see e.g. Chapter 2 in \cite{hall2015lie}).

	In this supplementary material, we consider the case where 
	\[
	\mathbb{E}(Z_t) = 0 \text{   for all  } t \in \mathbb{N}.
	\]
	We provide the extension to non-zero means in Section \ref{supp:section:non-zero:mean}.
	
	Recall that $s \in \mathbb{N}$, $s>1$, is fixed. For $i =1,\ldots,K$, let
	\[
	\hat{\mathrm{cov}}_{Z,i}
	= \frac{1}{s} \sum_{j=1}^s  Z_{s(i-1)+j} Z_{s(i-1)+j}'
	\]
	and let
	\[
	\mathrm{cov}_{Z,i}
	= \frac{1}{s} \sum_{j=1}^s \mathbb{E} \left( Z_{s(i-1)+j} Z_{s(i-1)+j}' \right).
	\]
	We provide extensions to the case where the vectors $Z_{s(i-1)+j}$ are empirically centered in Section \ref{supp:section:non-zero:mean}. Recall that we let $K \to \infty$ for all the asymptotic results that are shown.
	Let 
	\[
	\hat{\bar{\mathrm{cov}}}_{Z,K} = \frac{1}{K} \sum_{i=1}^K \hat{\mathrm{cov}}_{Z,i}
	\]
	and
	\[
	\bar{\mathrm{cov}}_{Z,K} = \frac{1}{K} \sum_{i=1}^K \mathrm{cov}_{Z,i}.
	\]

	We let
	\begin{equation} \label{supp:eq:ref:hatUZK}
		\hat{U}_{Z,K} 
		\in
		\mathrm{argmax}_{ U \in \mathcal{O}_p }
		\sum_{i=1}^K
		||  
		\mathrm{diag}
		\left(
		U
		\hat{\bar{\mathrm{cov}}}_{Z,K}^{-1/2}
		\hat{\mathrm{cov}}_{Z,i}
		\hat{\bar{\mathrm{cov}}}_{Z,K}^{-1/2}
		U'
		\right)
		||^2.
	\end{equation}
	We let 
	\[
	\hat{W}_{Z,K} =
	\hat{U}_{Z,K} 
	\hat{\bar{\mathrm{cov}}}_{Z,K}^{-1/2}.
	\]
	We now recall 
	\begin{equation} \label{supp:eq:X:equal:A:Z}
		X_t = A Z_t
	\end{equation} 
	for $t \in \mathbb{N}$ where $A$ is a fixed invertible $p \times p$ matrix.
	
	Throughout this supplementary material, we assume that $\bar{\mathrm{cov}}_{Z,K} = I_p$, as is done in the main body of the paper.
	
	We now define $\hat{\mathrm{cov}}_{X,i}$, $\mathrm{cov}_{X,i}$, $\hat{\bar{\mathrm{cov}}}_{X,K}$, $\bar{\mathrm{cov}}_{X,K}$, $\hat{U}_{X,K}$, and $\hat{W}_{X,K}$ similarly as above, but where the multivariate Gaussian process $Z$ is replaced by the multivariate Gaussian process $X$. The selection of $\hat{U}_{X,K}$ in the set of maximizers is arbitrary.
	
	The next lemma provides an equivariance property that relates
	$\hat{W}_{X,K}$ to $\hat{W}_{Z,K}$.
	
	\begin{lem} \label{supp:lem:equivariance}
		For any choice of $\hat{U}_{X,K}$ and $\hat{W}_{X,K}$ such that \eqref{supp:eq:ref:hatUZK} holds (with $\hat{\bar{\mathrm{cov}}}_{Z,K}^{-1/2}$ and
		$\hat{\mathrm{cov}}_{Z,i}$
		replaced by 
		$\hat{\bar{\mathrm{cov}}}_{X,K}^{-1/2}$ and
		$\hat{\mathrm{cov}}_{X,i}$), there exists a choice of $\hat{U}_{Z,K}$ and $\hat{W}_{Z,K}$  such that \eqref{supp:eq:ref:hatUZK} holds and such that we have
		\[
		\hat{W}_{X,K} = \hat{W}_{Z,K} A^{-1}.
		\]
	\end{lem}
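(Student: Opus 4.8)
The plan is to exploit the fact that, under $X_t = A Z_t$, each block covariance of $X$ is a fixed congruence transform of the corresponding block covariance of $Z$, and that after whitening by the respective average covariances this congruence collapses to a \emph{single} orthogonal conjugation that is the same for every block. First I would record the elementary identities that follow directly from $X_t = A Z_t$ and the definitions of $\hat{\mathrm{cov}}_{X,i}$ and $\hat{\bar{\mathrm{cov}}}_{X,K}$ in the uncentered setting of this section: for each $i$ we have $\hat{\mathrm{cov}}_{X,i} = A \hat{\mathrm{cov}}_{Z,i} A'$, and averaging over $i$ gives $\hat{\bar{\mathrm{cov}}}_{X,K} = A \hat{\bar{\mathrm{cov}}}_{Z,K} A'$. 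Throughout I work on the event where $\hat{\bar{\mathrm{cov}}}_{X,K}$ and $\hat{\bar{\mathrm{cov}}}_{Z,K}$ are positive definite, as is required for the estimators in \eqref{supp:eq:ref:hatUZK} to be well defined.

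Next I would set $O = \hat{\bar{\mathrm{cov}}}_{X,K}^{-1/2} A \hat{\bar{\mathrm{cov}}}_{Z,K}^{1/2}$ and check that it is orthogonal, which is where the second identity above is used:
\[
O O' = \hat{\bar{\mathrm{cov}}}_{X,K}^{-1/2} A \hat{\bar{\mathrm{cov}}}_{Z,K} A' \hat{\bar{\mathrm{cov}}}_{X,K}^{-1/2} = \hat{\bar{\mathrm{cov}}}_{X,K}^{-1/2} \hat{\bar{\mathrm{cov}}}_{X,K} \hat{\bar{\mathrm{cov}}}_{X,K}^{-1/2} = I_p,
\]
so $O \in \mathcal{O}_p$. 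From its definition $O$ satisfies $O \hat{\bar{\mathrm{cov}}}_{Z,K}^{-1/2} = \hat{\bar{\mathrm{cov}}}_{X,K}^{-1/2} A$, and taking transposes (using symmetry of $\hat{\bar{\mathrm{cov}}}_{Z,K}^{-1/2}$) I obtain the whitened-block identity
\[
\hat{\bar{\mathrm{cov}}}_{X,K}^{-1/2} \hat{\mathrm{cov}}_{X,i} \hat{\bar{\mathrm{cov}}}_{X,K}^{-1/2} = O \left( \hat{\bar{\mathrm{cov}}}_{Z,K}^{-1/2} \hat{\mathrm{cov}}_{Z,i} \hat{\bar{\mathrm{cov}}}_{Z,K}^{-1/2} \right) O',
\]
valid for every $i$ with the same matrix $O$. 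Substituting this into the objective in \eqref{supp:eq:ref:hatUZK} for $X$, the orthogonal invariance of the Frobenius norm (applied termwise in the sum over $i$) yields that the $X$-objective evaluated at $U$ equals the $Z$-objective evaluated at $UO$. Since $O$ is a fixed orthogonal matrix, the map $U \mapsto UO$ is a bijection of $\mathcal{O}_p$ onto itself, so $\hat{U}_{X,K}$ maximizes the $X$-objective if and only if $\hat{U}_{X,K}O$ maximizes the $Z$-objective. I may therefore take $\hat{U}_{Z,K} = \hat{U}_{X,K} O$, a legitimate maximizer for the $Z$-problem, and set $\hat{W}_{Z,K} = \hat{U}_{Z,K} \hat{\bar{\mathrm{cov}}}_{Z,K}^{-1/2}$.

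Finally, with this choice I would close the argument by a direct computation:
\[
\hat{W}_{Z,K} A^{-1} = \hat{U}_{X,K} O \hat{\bar{\mathrm{cov}}}_{Z,K}^{-1/2} A^{-1} = \hat{U}_{X,K} \hat{\bar{\mathrm{cov}}}_{X,K}^{-1/2} A A^{-1} = \hat{U}_{X,K} \hat{\bar{\mathrm{cov}}}_{X,K}^{-1/2} = \hat{W}_{X,K},
\]
where the middle step uses $O \hat{\bar{\mathrm{cov}}}_{Z,K}^{-1/2} = \hat{\bar{\mathrm{cov}}}_{X,K}^{-1/2} A$. This is exactly the asserted equivariance $\hat{W}_{X,K} = \hat{W}_{Z,K} A^{-1}$.

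The computation is almost entirely routine linear algebra, so I do not expect a serious obstacle; the one point I would emphasize is conceptual rather than technical. It is precisely the whitening by the \emph{average} covariance that turns the map linking the two joint-diagonalization problems into a single orthogonal conjugation $O$ \emph{independent of the block index $i$}; without this uniformity the orthogonal invariance of the Frobenius norm could not be pulled out of the sum over blocks. The remaining bookkeeping around the non-uniqueness of the maximizers is handled exactly by the ``there exists a choice'' phrasing of the statement, with the bijection $U \mapsto UO$ providing the required matching of the two argmax sets.
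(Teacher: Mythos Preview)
Your proof is correct and follows essentially the same route as the paper's: both identify a single orthogonal matrix relating the two whitenings, show that the $X$-objective at $U$ equals the $Z$-objective at $U$ times that orthogonal matrix, and then read off $\hat{W}_{X,K} = \hat{W}_{Z,K} A^{-1}$. The only difference is that the paper invokes an external result (Theorem~2.1 in \cite{ilmonen2012invariant}) to assert the existence of $\hat{V}$ with $\hat{\bar{\mathrm{cov}}}_{X,K}^{-1/2} = \hat{V}\hat{\bar{\mathrm{cov}}}_{Z,K}^{-1/2} A^{-1}$, whereas you construct it explicitly as $O = \hat{\bar{\mathrm{cov}}}_{X,K}^{-1/2} A \hat{\bar{\mathrm{cov}}}_{Z,K}^{1/2}$ and verify orthogonality directly, which makes your argument self-contained; a minor point is that the equality of objectives comes from direct substitution (the inner matrix is literally $(UO)M_{Z,i}(UO)'$) rather than from orthogonal invariance of the Frobenius norm as you phrase it, but this does not affect the validity.
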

	
	\begin{proof}
		We have $\hat{\bar{\mathrm{cov}}}_{X,K} = A \hat{\bar{\mathrm{cov}}}_{Z,K} A'$. Hence, from Theorem 2.1 in \cite{ilmonen2012invariant}
		there exists a unique orthogonal matrix $\hat{V}$ such that 
		\[
		\hat{\bar{\mathrm{cov}}}_{X,K}^{-1/2}
		=
		\hat{V}\hat{\bar{\mathrm{cov}}}_{Z,K}^{-1/2} A^{-1}
		\]
		(remark that $\hat{\bar{\mathrm{cov}}}_{X,K}^{-1/2}$ is symmetric by definition).
		Then, we have
		\begin{align} 
			& \sum_{i=1}^K
			||  
			\mathrm{diag}
			\left(
			\hat{U}
			\hat{\bar{\mathrm{cov}}}_{X,K}^{-1/2}
			\hat{\mathrm{cov}}_{X,i}
			\hat{\bar{\mathrm{cov}}}_{X,K}^{-1/2}
			\hat{U}'
			\right)
			||^2
			\label{supp:eq:criterion:UXK}
			\\
			= &
			\sum_{i=1}^K
			||  
			\mathrm{diag}
			\left(
			\hat{U}
			\hat{V}\hat{\bar{\mathrm{cov}}}_{Z,K}^{-1/2} A^{-1}
			A \hat{\mathrm{cov}}_{Z,i} A'
			\hat{V}\hat{\bar{\mathrm{cov}}}_{Z,K}^{-1/2} A^{-1}
			\hat{U}'
			\right)
			||^2
			\notag
			\\
			= &
			\sum_{i=1}^K
			||  
			\mathrm{diag}
			\left(
			\hat{U}
			\hat{V}\hat{\bar{\mathrm{cov}}}_{Z,K}^{-1/2} 
			\hat{\mathrm{cov}}_{Z,i} A'
			[A^{-1}]'\hat{\bar{\mathrm{cov}}}_{Z,K}^{-1/2} \hat{V} '
			\hat{U}'
			\right)
			||^2
			\notag
			\\
			= &
			\sum_{i=1}^K
			||  
			\mathrm{diag}
			\left(
			\hat{U}
			\hat{V}\hat{\bar{\mathrm{cov}}}_{Z,K}^{-1/2} 
			\hat{\mathrm{cov}}_{Z,i} 
			\hat{\bar{\mathrm{cov}}}_{Z,K}^{-1/2} 
			\hat{V}' 
			\hat{U}'
			\right)
			||^2,
			\label{supp:eq:criterion:UZK}
		\end{align}
		where the second to last relation follows from the fact that
		$\hat{\bar{\mathrm{cov}}}_{X,K}^{-1/2}
		=
		\hat{V}\hat{\bar{\mathrm{cov}}}_{Z,K}^{-1/2} A^{-1}$
		is symmetric. 
		Thus, for any $\hat{U}_{X,K}$ maximizing \eqref{supp:eq:criterion:UXK}, the corresponding $\hat{U}_{Z,K} := \hat{U}_{X,K} \hat{V}$ satisfies \eqref{supp:eq:ref:hatUZK}.
		Furthermore 
		\begin{align*}
			\hat{W}_{X,K}
			= &
			\hat{U}_{Z,K} \hat{V}' 
			\hat{\bar{\mathrm{cov}}}_{X,K}^{-1/2}
			\\
			= &
			\hat{U}_{Z,K} \hat{V}' 
			\hat{V}\hat{\bar{\mathrm{cov}}}_{Z,K}^{-1/2} A^{-1}
			\\
			= &
			\hat{U}_{Z,K}\hat{\bar{\mathrm{cov}}}_{Z,K}^{-1/2} A^{-1}
			\\
			= &
			\hat{W}_{Z,K} A^{-1}.
		\end{align*}
	\end{proof}
	
	\section{Consistency}
	
	\begin{lem} \label{supp:lem:calcul:esp:cost}
		Let $U$ be an orthogonal $p \times p$ matrix and let $U_i'$ be its $i$-th row. Let, for $i=1,...,K$ and $a,b=1,...,s$, $D_{Z,i}^{(a,b)}$ be the $p \times p$ diagonal matrix defined by
		\[
		\left[
		D_{Z,i}^{(a,b)}
		\right]_{k,k}
		=
		\mathbb{E}
		\left(
		Z_{(i-1)s+a}^{(k)}
		Z_{(i-1)s+b}^{(k)}
		\right).
		\]
		We have
		\begin{align*}
			\sum_{i=1}^K
			\sum_{j=1}^p
			\mathbb{E}
			\left[
			\left(
			U_j'
			\hat{\mathrm{cov}}_{Z,i}
			U_j
			\right)^2
			\right]
			= &
			\sum_{i=1}^K
			\sum_{j=1}^p
			\left(
			U_j'
			\mathrm{cov}_{Z,i}
			U_j
			\right)^2
			+
			\frac{2}{s^2}
			\sum_{i=1}^K
			\sum_{a,b=1}^s
			\sum_{j=1}^p
			\left(
			U_j'
			D_{Z,i}^{(a,b)}
			U_j
			\right)^2.
		\end{align*}
	\end{lem}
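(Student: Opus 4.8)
The plan is to reduce the statement to a single fourth-moment computation for a scalar Gaussian linear form, and then invoke Isserlis' (Wick's) formula. Fix $i \in \{1,\ldots,K\}$ and $j \in \{1,\ldots,p\}$, and set $W_a := U_j' Z_{s(i-1)+a}$ for $a = 1,\ldots,s$. Since $\hat{\mathrm{cov}}_{Z,i} = (1/s)\sum_{a=1}^s Z_{s(i-1)+a} Z_{s(i-1)+a}'$, we have $U_j' \hat{\mathrm{cov}}_{Z,i} U_j = (1/s)\sum_{a=1}^s W_a^2$, so that
\[
\mathbb{E}\left[\left(U_j' \hat{\mathrm{cov}}_{Z,i} U_j\right)^2\right] = \frac{1}{s^2}\sum_{a,b=1}^s \mathbb{E}\left[W_a^2 W_b^2\right].
\]

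Next I would exploit Gaussianity. By Condition \ref{cond:gaussian:independent} the process $Z$ is jointly Gaussian, and in the centered setting of this supplement each $W_a$ is a centered Gaussian with $(W_1,\ldots,W_s)$ jointly Gaussian. Isserlis' formula for four centered jointly Gaussian scalars then gives
\[
\mathbb{E}\left[W_a^2 W_b^2\right] = \mathbb{E}\left[W_a^2\right]\mathbb{E}\left[W_b^2\right] + 2\left(\mathbb{E}\left[W_a W_b\right]\right)^2.
\]
The covariances are identified using source independence: $\mathbb{E}[W_a W_b] = U_j' \mathbb{E}[Z_{s(i-1)+a} Z_{s(i-1)+b}'] U_j$, and since the components of $Z$ are independent, the off-diagonal entries of $\mathbb{E}[Z_{s(i-1)+a} Z_{s(i-1)+b}']$ vanish; hence this cross-covariance matrix is diagonal with $(k,k)$ entry $[D_{Z,i}^{(a,b)}]_{k,k}$, giving $\mathbb{E}[W_a W_b] = U_j' D_{Z,i}^{(a,b)} U_j$ and in particular $\mathbb{E}[W_a^2] = U_j' D_{Z,i}^{(a,a)} U_j$.

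Finally I would assemble the two pieces. For the product terms, $\sum_{a,b=1}^s \mathbb{E}[W_a^2]\mathbb{E}[W_b^2] = \big(\sum_{a=1}^s U_j' D_{Z,i}^{(a,a)} U_j\big)^2$; since $\mathbb{E}[Z_{s(i-1)+a} Z_{s(i-1)+a}'] = D_{Z,i}^{(a,a)}$, the definition of $\mathrm{cov}_{Z,i}$ yields $\mathrm{cov}_{Z,i} = (1/s)\sum_{a=1}^s D_{Z,i}^{(a,a)}$, so this sum equals $s^2 (U_j' \mathrm{cov}_{Z,i} U_j)^2$. For the cross terms, $2\sum_{a,b=1}^s (\mathbb{E}[W_a W_b])^2 = 2\sum_{a,b=1}^s (U_j' D_{Z,i}^{(a,b)} U_j)^2$. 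Dividing by $s^2$ gives
\[
\mathbb{E}\left[\left(U_j' \hat{\mathrm{cov}}_{Z,i} U_j\right)^2\right] = \left(U_j' \mathrm{cov}_{Z,i} U_j\right)^2 + \frac{2}{s^2}\sum_{a,b=1}^s \left(U_j' D_{Z,i}^{(a,b)} U_j\right)^2,
\]
and summing over $j = 1,\ldots,p$ and $i = 1,\ldots,K$ delivers the claim.

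I do not anticipate a serious obstacle: the computation is elementary once the fourth moment is expanded. The only points requiring care are that the Isserlis expansion is valid solely because of the joint Gaussianity granted by Condition \ref{cond:gaussian:independent} (this is in fact essentially the one place that assumption is used), and that source independence is what forces $\mathbb{E}[Z_{s(i-1)+a} Z_{s(i-1)+b}']$ to be diagonal and hence to coincide exactly with $D_{Z,i}^{(a,b)}$. Note that only within-block moments (with $a,b$ in the same block $i$) appear, so the cross-block independence of Condition \ref{cond:block:independence} is not needed for this lemma.
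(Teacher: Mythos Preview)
Your proof is correct and rests on the same two ingredients as the paper's --- Isserlis' theorem and component independence --- but is organized more cleanly. The paper expands in coordinates first: it writes $(U_j' \hat{\mathrm{cov}}_{Z,i} U_j)^2$ as a quadruple sum over indices $l_1,m_1,l_2,m_2$, applies Isserlis to $\mathbb{E}[(\hat{\mathrm{cov}}_{Z,i})_{l_1,m_1}(\hat{\mathrm{cov}}_{Z,i})_{l_2,m_2}]$ at the level of four individual $Z$-components, and then tracks indicator functions such as $\mathbf{1}_{l_1=l_2}\mathbf{1}_{m_1=m_2}$ before reassembling the quadratic forms. Your reduction to the scalar $W_a = U_j' Z_{s(i-1)+a}$ short-circuits all of this: the quadratic form is simply $(1/s)\sum_a W_a^2$, Isserlis is applied once to $\mathbb{E}[W_a^2 W_b^2]$, and independence enters only to identify $\mathbb{E}[W_a W_b]$ with $U_j' D_{Z,i}^{(a,b)} U_j$. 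The two arguments are logically equivalent, but yours avoids the index bookkeeping and makes transparent why exactly these two assumptions (Gaussianity for the fourth-moment identity, source independence for diagonality of the cross-covariance) are what drive the lemma. Your closing remark that Condition~\ref{cond:block:independence} is not used here is also accurate.
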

	
	\begin{proof}
		We have
		\begin{align}
			\sum_{i=1}^K
			\sum_{j=1}^p
			\mathbb{E}
			\left[
			\left(
			U_j'
			\hat{\mathrm{cov}}_{Z,i}
			U_j
			\right)^2
			\right]
			= &
			\sum_{i=1}^K
			\sum_{j=1}^p
			\mathbb{E}
			\left[
			\left(
			\sum_{l,m = 1}^p
			(U_j)_l
			\left( \hat{\mathrm{cov}}_{Z,i} \right)_{l,m}
			(U_j)_m
			\right)^2
			\right]
			\label{supp:eq:mean:cost:start:un}
			\\
			= &
			\sum_{i=1}^K
			\sum_{j=1}^p
			\mathbb{E}
			\left[
			\sum_{l_1,m_1,l_2,m_2 = 1}^p
			(U_j)_{l_1}
			\left( \hat{\mathrm{cov}}_{Z,i} \right)_{l_1,m_1}
			(U_j)_{m_1}
			(U_j)_{l_2}
			\left( \hat{\mathrm{cov}}_{Z,i} \right)_{l_2,m_2}
			(U_j)_{m_2}
			\right].
			\label{supp:eq:mean:cost:start:deux}
		\end{align}
		We now compute, using Isserliss' theorem,
		\begin{align*}
			\mathbb{E}
			\left(
			\left( \hat{\mathrm{cov}}_{Z,i} \right)_{l_1,m_1}
			\left( \hat{\mathrm{cov}}_{Z,i} \right)_{l_2,m_2}
			\right)
			= &
			\frac{1}{s^2}
			\sum_{a,b = 1}^s
			\mathbb{E}
			\left( 
			Z_{(i-1)s+a}^{(l_1)}
			Z_{(i-1)s+a}^{(m_1)}
			Z_{(i-1)s+b}^{(l_2)}
			Z_{(i-1)s+b}^{(m_2)}
			\right)
			\\
			= &
			\frac{1}{s^2}
			\sum_{a,b = 1}^s
			\mathbb{E}
			\left( 
			Z_{(i-1)s+a}^{(l_1)}
			Z_{(i-1)s+a}^{(m_1)}
			\right)
			\mathbb{E}
			\left(
			Z_{(i-1)s+b}^{(l_2)}
			Z_{(i-1)s+b}^{(m_2)}
			\right)
			\\
			& +
			\frac{1}{s^2}
			\sum_{a,b = 1}^s
			\mathbb{E}
			\left( 
			Z_{(i-1)s+a}^{(l_1)}
			Z_{(i-1)s+b}^{(l_2)}
			\right)
			\mathbb{E}
			\left(
			Z_{(i-1)s+a}^{(m_1)}
			Z_{(i-1)s+b}^{(m_2)}
			\right)
			\\
			& +
			\frac{1}{s^2}
			\sum_{a,b = 1}^s
			\mathbb{E}
			\left( 
			Z_{(i-1)s+a}^{(l_1)}
			Z_{(i-1)s+b}^{(m_2)}
			\right)
			\mathbb{E}
			\left(
			Z_{(i-1)s+a}^{(m_1)}
			Z_{(i-1)s+b}^{(l_2)}
			\right).
		\end{align*}
		Then, we obtain, using the independence of the Gaussian processes $(Z^{(1)}_t)_{t \in \mathbb{N}},...,(Z^{(p)}_t)_{t \in \mathbb{N}}$,
		\begin{align} \label{supp:eq:moment:product:hat:cov}
			\mathbb{E}
			\left(
			\left( \hat{\mathrm{cov}}_{Z,i} \right)_{l_1,m_1}
			\left( \hat{\mathrm{cov}}_{Z,i} \right)_{l_2,m_2}
			\right)
			= &
			\left( \mathrm{cov}_{Z,i} \right)_{l_1,m_1}
			\left( \mathrm{cov}_{Z,i} \right)_{l_2,m_2}
			\\
			& +
			\mathbf{1}_{l_1=l_2}
			\mathbf{1}_{m_1=m_2}
			\frac{1}{s^2}
			\sum_{a,b = 1}^s
			\mathbb{E}
			\left( 
			Z_{(i-1)s+a}^{(l_1)}
			Z_{(i-1)s+b}^{(l_1)}
			\right)
			\mathbb{E}
			\left(
			Z_{(i-1)s+a}^{(m_1)}
			Z_{(i-1)s+b}^{(m_1)}
			\right)
			\notag
			\\
			& +
			\mathbf{1}_{l_1=m_2}
			\mathbf{1}_{m_1=l_2}
			\frac{1}{s^2}
			\sum_{a,b = 1}^s
			\mathbb{E}
			\left( 
			Z_{(i-1)s+a}^{(l_1)}
			Z_{(i-1)s+b}^{(l_1)}
			\right)
			\mathbb{E}
			\left(
			Z_{(i-1)s+a}^{(m_1)}
			Z_{(i-1)s+b}^{(m_1)}
			\right). \notag
		\end{align}
		Hence, from \eqref{supp:eq:mean:cost:start:deux}, we obtain
		\begin{flalign} \label{supp:eq:computation:mean:value:VjhatcovVj}
			&
			\sum_{i=1}^K
			\sum_{j=1}^p
			\mathbb{E}
			\left[
			\left(
			U_j'
			\hat{\mathrm{cov}}_{Z,i}
			U_j
			\right)^2
			\right]
			= & \notag
			\\
			&
			\sum_{i=1}^K
			\sum_{j=1}^p
			\sum_{l_1,m_1,l_2,m_2 = 1}^p
			(U_j)_{l_1}
			\left( \mathrm{cov}_{Z,i} \right)_{l_1,m_1}
			(U_j)_{m_1}
			(U_j)_{l_2}
			\left( \mathrm{cov}_{Z,i} \right)_{l_2,m_2}
			(U_j)_{m_2}
			& 
			\\
			&  +
			\sum_{i=1}^K
			\sum_{j=1}^p
			\sum_{l_1,m_1 = 1}^p
			(U_j)_{l_1}^2
			(U_j)_{m_1}^2
			\frac{1}{s^2}
			\sum_{a,b=1}^s
			\mathbb{E}
			\left( 
			Z_{(i-1)s+a}^{(l_1)}
			Z_{(i-1)s+b}^{(l_1)}
			\right)
			\mathbb{E}
			\left(
			Z_{(i-1)s+a}^{(m_1)}
			Z_{(i-1)s+b}^{(m_1)}
			\right)
			& \notag
			\\
			& +
			\sum_{i=1}^K
			\sum_{j=1}^p
			\sum_{l_1,m_1 = 1}^p
			(U_j)_{l_1}^2
			(U_j)_{m_1}^2
			\frac{1}{s^2}
			\sum_{a,b=1}^s
			\mathbb{E}
			\left( 
			Z_{(i-1)s+a}^{(l_1)}
			Z_{(i-1)s+b}^{(l_1)}
			\right)
			\mathbb{E}
			\left(
			Z_{(i-1)s+a}^{(m_1)}
			Z_{(i-1)s+b}^{(m_1)}
			\right). \notag
			&
		\end{flalign}
		In the above display, the triple sum in \eqref{supp:eq:computation:mean:value:VjhatcovVj} can be treated in the same (reverse) way as from \eqref{supp:eq:mean:cost:start:un} to \eqref{supp:eq:mean:cost:start:deux}. Hence, we obtain
		\begin{flalign*}
			&
			\sum_{i=1}^K
			\sum_{j=1}^p
			\mathbb{E}
			\left[
			\left(
			U_j'
			\hat{\mathrm{cov}}_{Z,i}
			U_j
			\right)^2
			\right]
			= &
			\\
			&
			\sum_{i=1}^K
			\sum_{j=1}^p
			\left(
			U_j'
			\mathrm{cov}_{Z,i}
			U_j
			\right)^2
			&
			\\
			&  +
			\frac{2}{s^2}
			\sum_{i=1}^K
			\sum_{j=1}^p
			\sum_{l_1,m_1 = 1}^p
			(U_j)_{l_1}^2
			(U_j)_{m_1}^2
			\sum_{a,b=1}^s
			\mathbb{E}
			\left( 
			Z_{(i-1)s+a}^{(l_1)}
			Z_{(i-1)s+b}^{(l_1)}
			\right)
			\mathbb{E}
			\left(
			Z_{(i-1)s+a}^{(m_1)}
			Z_{(i-1)s+b}^{(m_1)}
			\right)
			&
			\\
			& = 
			\sum_{i=1}^K
			\sum_{j=1}^p
			\left(
			U_j'
			\mathrm{cov}_{Z,i}
			U_j
			\right)^2
			&
			\\
			&
			+
			\frac{2}{s^2}
			\sum_{i=1}^K
			\sum_{j=1}^p
			\sum_{a,b=1}^s
			\left(
			\sum_{l_1 = 1}^p
			(U_j)_{l_1}^2
			\mathbb{E}
			\left( 
			Z_{(i-1)s+a}^{(l_1)}
			Z_{(i-1)s+b}^{(l_1)}
			\right)
			\right)
			\left(
			\sum_{m_1 = 1}^p
			(U_j)_{m_1}^2
			\mathbb{E}
			\left(
			Z_{(i-1)s+a}^{(m_1)}
			Z_{(i-1)s+b}^{(m_1)}
			\right)
			\right)
			\\
			& = 
			\sum_{i=1}^K
			\sum_{j=1}^p
			\left(
			U_j'
			\mathrm{cov}_{Z,i}
			U_j
			\right)^2
			&
			\\
			&
			+
			\frac{2}{s^2}
			\sum_{i=1}^K
			\sum_{a,b=1}^s
			\sum_{j=1}^p
			\left(
			\sum_{l_1 = 1}^p
			(U_j)_{l_1}^2
			\mathbb{E}
			\left( 
			Z_{(i-1)s+a}^{(l_1)}
			Z_{(i-1)s+b}^{(l_1)}
			\right)
			\right)^2.
		\end{flalign*}
		Thus,
		\begin{align*}
			\sum_{i=1}^K
			\sum_{j=1}^p
			\mathbb{E}
			\left[
			\left(
			U_j'
			\hat{\mathrm{cov}}_{Z,i}
			U_j
			\right)^2
			\right]
			= &
			\sum_{i=1}^K
			\sum_{j=1}^p
			\left(
			U_j'
			\mathrm{cov}_{Z,i}
			U_j
			\right)^2
			+
			\frac{2}{s^2}
			\sum_{i=1}^K
			\sum_{a,b=1}^s
			\sum_{j=1}^p
			\left(
			U_j'
			D_{Z,i}^{(a,b)}
			U_j
			\right)^2.
		\end{align*}
		Hence, the proof is concluded. 
	\end{proof}
	
	In this supplementary material, we let $C_{inf} >0$ and $0 < C_{sup} < + \infty$ denote generic constants (not depending on $K$) which may change from place to place. We restate Conditions \ref{cond:block:independence} and \ref{cond:max:variance:bis}, also using Condition \ref{cond:gaussian:independent}. 
	
	\begin{cond} \label{supp:cond:block:independence}
		There exists a fixed $L \in \mathbb{N}$ such that
		for any $i,j \in \{1,...,K\}$, $ |i - j| \geq L$, the Gaussian vectors $( Z_{(i-1)s+a}^{(k)} )_{a=1,...,s; k=1,...,p}$ and $( Z_{(j-1)s+a}^{(k)} )_{a=1,...,s; k=1,...,p}$ are independent.
	\end{cond}

	\begin{cond} \label{supp:cond:max:variance}
		We have
		\[
		\sup_{i \in \mathbb{N}}
		\max_{ j=1,...,p }
		\mathbb{E}
		\left(
		\left[
		Z_{i}^{(j)}
		\right]^2
		\right)
		\leq
		C_{sup}.
		\]
	\end{cond}
	
	\begin{lem} \label{supp:lem:basic:bounds}
		Assume that Conditions \ref{supp:cond:block:independence} and \ref{supp:cond:max:variance} hold.
		We have
		\begin{equation} \label{supp:eq:lem:bound:un}
			\max_{i=1,...,K}
			\rho_{sup}( \mathrm{cov}_{Z,i} )
			\leq C_{sup},
		\end{equation}
		and for any $r \in \mathbb{N}$,
		\begin{equation} \label{supp:eq:lem:bound:deux}
			\max_{i=1,...,K}
			\mathbb{E}
			\left(
			\rho_{sup}( \hat{\mathrm{cov}}_{Z,i} )^r
			\right)
			\leq C_{sup}
		\end{equation}
		and
		\begin{equation} \label{supp:eq:lem:bound:trois}
			\left(
			\mathbb{E}
			\left[
			\rho_{sup}
			\left(
			\hat{\bar{\mathrm{cov}}}_{Z,K}
			-
			I_p
			\right)^r
			\right]
			\right)^{1/r}
			\leq C_{sup}
			\frac{1}{\sqrt{K}}.
		\end{equation}
		
	\end{lem}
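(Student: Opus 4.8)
The plan is to establish the three bounds \eqref{supp:eq:lem:bound:un}, \eqref{supp:eq:lem:bound:deux} and \eqref{supp:eq:lem:bound:trois} in turn, the first two being essentially immediate consequences of Gaussianity, and the third being the substantive estimate, obtained through a moment expansion that exploits the finite dependence of Condition \ref{supp:cond:block:independence}.

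For \eqref{supp:eq:lem:bound:un}, I would first observe that, by the independence and zero mean of the sources (Condition \ref{cond:gaussian:independent} together with the convention $\mathbb{E}(Z_t)=0$ adopted here), each $\mathrm{cov}_{Z,i}$ is diagonal, with $k$-th diagonal entry $(1/s)\sum_{j=1}^s \mathbb{E}[(Z^{(k)}_{s(i-1)+j})^2]$. Since the largest singular value of a diagonal matrix equals the maximal modulus of its diagonal entries, and each such entry is an average of terms bounded by $C_{sup}$ through Condition \ref{supp:cond:max:variance}, the bound $\rho_{sup}(\mathrm{cov}_{Z,i}) \leq C_{sup}$ follows uniformly in $i$.

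For \eqref{supp:eq:lem:bound:deux}, I would use that $\hat{\mathrm{cov}}_{Z,i}$ is symmetric non-negative definite, so that $\rho_{sup}(\hat{\mathrm{cov}}_{Z,i}) \leq \mathrm{tr}(\hat{\mathrm{cov}}_{Z,i}) = (1/s)\sum_{j=1}^s\sum_{k=1}^p (Z^{(k)}_{s(i-1)+j})^2$, a sum of $ps$ (a fixed number) squared Gaussians. Minkowski's inequality then gives $\| \rho_{sup}(\hat{\mathrm{cov}}_{Z,i}) \|_{L^r} \leq (1/s)\sum_{j,k}\| (Z^{(k)}_{s(i-1)+j})^2 \|_{L^r}$, and each summand is controlled by the Gaussian moment bound $\mathbb{E}[(Z^{(k)}_t)^{2r}] \leq c_r (\mathrm{Var}\,Z^{(k)}_t)^r \leq c_r C_{sup}^r$; since the number of summands and the variance bound are uniform in $i$, this yields \eqref{supp:eq:lem:bound:deux} with a constant depending only on $r$, $p$, $s$ and $C_{sup}$.

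The real work is \eqref{supp:eq:lem:bound:trois}. Using $\bar{\mathrm{cov}}_{Z,K} = I_p$, I would write $\hat{\bar{\mathrm{cov}}}_{Z,K} - I_p = (1/K)\sum_{i=1}^K(\hat{\mathrm{cov}}_{Z,i} - \mathrm{cov}_{Z,i})$, a centered average, and reduce the operator norm to the Frobenius norm, so that it suffices to bound, for each pair $(a,b)$, the moments of $(1/K)\sum_i Y^{(ab)}_i$ with $Y^{(ab)}_i = [\hat{\mathrm{cov}}_{Z,i}]_{ab} - [\mathrm{cov}_{Z,i}]_{ab}$; these are mean zero, $L$-dependent in the index $i$ by Condition \ref{supp:cond:block:independence}, and have uniformly bounded moments of every order by \eqref{supp:eq:lem:bound:deux} and \eqref{supp:eq:lem:bound:un}. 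Since $L^r$ norms are monotone, it is enough to treat even $r=2q$. Expanding $\mathbb{E}[(\sum_i Y_i)^{2q}]$ as a sum over tuples $(i_1,\ldots,i_{2q})$ and grouping the indices into clusters (two indices in the same cluster when linked by a chain of indices at mutual distance $<L$), any tuple containing a singleton cluster factorizes, through independence, into a mean-zero factor and hence contributes zero; thus only configurations in which every cluster has size at least two survive, i.e. at most $q$ clusters. A counting argument bounds the number of such configurations by $C_q K^q$, while Hölder's inequality with the uniform moment bounds controls each surviving expectation by a constant, giving $\mathbb{E}[(\sum_i Y_i)^{2q}] \leq C_q K^q$ and therefore $(\mathbb{E}[|(1/K)\sum_i Y_i|^{2q}])^{1/(2q)} \leq C_{sup} K^{-1/2}$. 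Summing over the finitely many pairs $(a,b)$ then yields \eqref{supp:eq:lem:bound:trois}. I expect the main obstacle to be precisely this combinatorial bookkeeping: making rigorous that singleton clusters annihilate a term under $L$-dependence and mean zero, and verifying that the number of surviving index configurations is genuinely $O(K^q)$ uniformly in $K$, so that the $1/\sqrt{K}$ rate is attained. All the moment inputs that this argument consumes, namely the uniform integrability of $\rho_{sup}(\hat{\mathrm{cov}}_{Z,i})$ and hence of the entries $Y^{(ab)}_i$, are supplied for free by Gaussianity via \eqref{supp:eq:lem:bound:deux}.
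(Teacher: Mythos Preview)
Your proof is correct, and the overall architecture matches the paper's: each of the three bounds is reduced to entrywise control, with the third requiring a moment estimate for sums of $L$-dependent centered variables. The execution, however, differs in two places that are worth noting.

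For \eqref{supp:eq:lem:bound:deux}, the paper does not use the trace bound. It instead dominates $\rho_{sup}(\hat{\mathrm{cov}}_{Z,i})^r$ by norm equivalence through the entrywise sum $\sum_{k,l,a}|Z^{(k)}_{(i-1)s+a} Z^{(l)}_{(i-1)s+a}|$, expands the $r$-th power, and bounds the resulting absolute Gaussian moments via a Li--Wei inequality for expected absolute products together with Cauchy--Schwarz. Your route via $\rho_{sup}\leq\mathrm{tr}$, Minkowski, and the explicit Gaussian moment formula is more elementary and avoids the external reference; it exploits the positive semidefiniteness of $\hat{\mathrm{cov}}_{Z,i}$, which the paper's argument does not.

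For \eqref{supp:eq:lem:bound:trois}, the paper bypasses the cluster combinatorics entirely. It writes $\sum_{i=1}^K Y_i = \sum_{a=0}^{L-1}\sum_{i\equiv a \bmod L} Y_i$, observes that within each residue class the summands are mutually independent (pairwise independence from Condition~\ref{supp:cond:block:independence} lifts to joint independence by Gaussianity), and applies Rosenthal's inequality to each inner sum to obtain $\mathbb{E}|\sum_i Y_i|^r \leq C_{sup}\max(K,K^{r/2})$ directly. This is shorter and sidesteps precisely the bookkeeping you flag as the main obstacle. Your cluster/moment-expansion argument is the standard alternative and is equally valid here; just be aware that the step ``a singleton cluster factors off and vanishes'' uses that the singleton block is independent of \emph{all} the other blocks jointly, not merely of each one pairwise, which again is where Gaussianity is silently doing work.
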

	
	\begin{proof}
		The matrix $\mathrm{cov}_{Z,i}$ is diagonal, and we have for $k=1,...,p$,
		\[
		\left|
		\left[ \mathrm{cov}_{Z,i} \right]_{k,k}
		\right|
		=
		\frac{1}{s}
		\sum_{a=1}^s
		\mathbb{E}
		\left(
		\left[
		Z_{(i-1)s+a}^{(k)}
		\right]^2
		\right)
		\leq C_{sup},
		\]
		from Condition \ref{supp:cond:max:variance}. Hence, \eqref{supp:eq:lem:bound:un} holds by equivalence of norms in fixed dimension $p$. Again by equivalence of norms we obtain for $i=1,...,K$
		\begin{align*}
			\mathbb{E}
			\left(
			\rho_{sup}( \hat{\mathrm{cov}}_{Z,i} )^r
			\right)
			\leq &
			C_{sup}
			\mathbb{E}
			\left(
			\left[
			\sum_{k,l=1}^p
			\sum_{a=1}^s
			\left|
			Z_{(i-1)s+a}^{(k)}
			Z_{(i-1)s+a}^{(l)}
			\right|
			\right]^r
			\right)
			\\
			= &
			C_{sup}
			\sum_{
				\substack{
					(k_1,l_1,a_1) \in \{1,...,p\}^2 \times \{1,...,s\} 
					\\
					\vdots
					\\
					(k_r,l_r,a_r) \in \{1,...,p\}^2 \times \{1,...,s\} 
				}
			}
			\mathbb{E}
			\left(
			\left|
			Z_{(i-1)s+a_1}^{(k_1)}
			Z_{(i-1)s+a_1}^{(l_1)}
			...
			Z_{(i-1)s+a_r}^{(k_r)}
			Z_{(i-1)s+a_r}^{(l_r)}
			\right|
			\right)
			\\
			\leq & C_{sup},
		\end{align*}
		since $p$ and $s$ are fixed, from Condition \ref{supp:cond:max:variance}, from Theorem 1 in \cite{li2012gaussian} and from the Cauchy-Schwarz inequality.
		
		Let us turn to \eqref{supp:eq:lem:bound:trois}.
		We have
		\[
		\rho_{sup}
		\left(
		\hat{\bar{\mathrm{cov}}}_{Z,K}
		-
		I_p
		\right)^r
		\leq
		C_{sup}
		\sum_{k,l=1}^p
		\left|
		\left[  \hat{\bar{\mathrm{cov}}}_{Z,K} \right]_{k,l}
		-
		\left[ I_p \right]_{k,l}
		\right|^r.
		\]
		Then, for any $k,l \in \{1,...,p\}$, is is sufficient to show that
		\begin{equation} \label{supp:eq:to:show:moment:hatbarZ}
			\mathbb{E}
			\left(
			\left|
			\left[  \hat{\bar{\mathrm{cov}}}_{Z,K} \right]_{k,l}
			-
			\left[ I_p \right]_{k,l}
			\right|^r
			\right)
			\leq 
			\frac{C_{sup}}{ K^{r/2} }.
		\end{equation}
		This is true for $r=2$, since then $\left[  \hat{\bar{\mathrm{cov}}}_{Z,K} \right]_{k,l}
		-
		\left[ I_p \right]_{k,l}$ is of the form
		\begin{equation} \label{supp:eq:generic:sum:one}
			\frac{1}{K}
			\sum_{i=1}^K
			a_i,
		\end{equation}
		where the $a_i$ are centered random variables with bounded variances and where $a_i$ and $a_j$ are independent for $|i - j| \geq L$. Thus the mean value of the square of \eqref{supp:eq:generic:sum:one} is of order $O(1/K)$ as $K \to \infty$.
		Hence, \eqref{supp:eq:to:show:moment:hatbarZ} also holds for $r=1$. We have, using $|t_1 + \cdots + t_L|^r \leq L^r |t_1|^r + \cdots + L^r |t_L|^r$ for $t_1 , \ldots , t_L \in \mathbb{R}$, and letting $i \text{ mod } L$ be the remainder of the Euclidean division of $i$ by $L$,
		\begin{align*}
			\mathbb{E}
			\left(
			\left|
			\left[  \hat{\bar{\mathrm{cov}}}_{Z,K} \right]_{k,l}
			-
			\left[ I_p \right]_{k,l}
			\right|^r
			\right)
			= &
			\frac{1}{K^{r}}
			\mathbb{E}
			\left(
			\left|
			\sum_{i=1}^K
			\left[
			\hat{\mathrm{cov}}_{Z,i}
			-
			\mathrm{cov}_{Z,i}
			\right]_{k,l}
			\right|^r
			\right)
			\\
			\leq &
			\frac{L^r}{K^{r}}
			\sum_{a = 0}^{L-1}
			\mathbb{E}
			\left(
			\left|
			\sum_{\substack{i=1 \\ i \text{ mod } L = a}}^K
			\left[
			\hat{\mathrm{cov}}_{Z,i}
			-
			\mathrm{cov}_{Z,i}
			\right]_{k,l}
			\right|^r
			\right)
			\\
			\leq &
			L
			\frac{L^r}{K^{r}}
			C_{sup}
			\max( K , K^{r/2} ),
		\end{align*}
		because in each of the $L$ inner sums above, the summands are independent,
		from the Rosenthal inequality \cite{rosenthal1970subspaces} and from \eqref{supp:eq:lem:bound:un} and \eqref{supp:eq:lem:bound:deux}. Thus the proof is concluded.
	\end{proof}
	
	\begin{lem} \label{supp:lem:removing:hat}
		Assume that conditions \ref{supp:cond:block:independence} and \ref{supp:cond:max:variance} hold. Then, we have, for any fixed $p \times p$ orthogonal matrix $U$, with rows $U_1',\ldots,U_p'$,
		\[
		\sum_{i=1}^K
		\sum_{j=1}^p
		\left(
		U_j'
		\hat{\mathrm{cov}}_{Z,i}
		U_j
		\right)^2
		-
		\sum_{i=1}^K
		\sum_{j=1}^p
		\left(
		U_j'
		\hat{\bar{\mathrm{cov}}}_{Z,K}^{-1/2}
		\hat{\mathrm{cov}}_{Z,i}
		\hat{\bar{\mathrm{cov}}}_{Z,K}^{-1/2}
		U_j
		\right)^2
		=
		o_p(K).
		\]
	\end{lem}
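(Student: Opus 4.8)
The plan is to exploit the fact that the whitening matrix $\hat{\bar{\mathrm{cov}}}_{Z,K}^{-1/2}$ is close to the identity, so that inserting it between the two copies of $\hat{\mathrm{cov}}_{Z,i}$ changes each summand only by a negligible amount. First I would record that, since $\bar{\mathrm{cov}}_{Z,K} = I_p$, Lemma~\ref{supp:lem:basic:bounds} (specifically \eqref{supp:eq:lem:bound:trois} with $r = 1$) gives $\rho_{sup}(\hat{\bar{\mathrm{cov}}}_{Z,K} - I_p) = O_p(1/\sqrt{K})$. In particular $\hat{\bar{\mathrm{cov}}}_{Z,K}$ lies, with probability tending to one, in a fixed operator-norm neighbourhood of $I_p$ on which the map $M \mapsto M^{-1/2}$ is Lipschitz; a standard delta-method argument then yields $\rho_{sup}(\hat{\bar{\mathrm{cov}}}_{Z,K}^{-1/2} - I_p) = O_p(1/\sqrt{K})$. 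Writing $\hat{E} = \hat{\bar{\mathrm{cov}}}_{Z,K}^{-1/2} - I_p$, I thus have $\rho_{sup}(\hat{E}) = O_p(1/\sqrt{K})$ and $\rho_{sup}(\hat{\bar{\mathrm{cov}}}_{Z,K}^{-1/2}) = O_p(1)$.

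Second, I would fix $i$ and $j$ and set $a_{i,j} = U_j' \hat{\mathrm{cov}}_{Z,i} U_j$ and $b_{i,j} = U_j' \hat{\bar{\mathrm{cov}}}_{Z,K}^{-1/2} \hat{\mathrm{cov}}_{Z,i} \hat{\bar{\mathrm{cov}}}_{Z,K}^{-1/2} U_j$, and expand the difference of the two central matrices as
\[
\hat{\mathrm{cov}}_{Z,i} - \hat{\bar{\mathrm{cov}}}_{Z,K}^{-1/2} \hat{\mathrm{cov}}_{Z,i} \hat{\bar{\mathrm{cov}}}_{Z,K}^{-1/2}
= - \left( \hat{E} \hat{\mathrm{cov}}_{Z,i} + \hat{\mathrm{cov}}_{Z,i} \hat{E} + \hat{E} \hat{\mathrm{cov}}_{Z,i} \hat{E} \right).
\]
Since $U_j$ is a unit vector, $|U_j' M U_j| \le \rho_{sup}(M)$, and submultiplicativity of $\rho_{sup}$ gives $|a_{i,j} - b_{i,j}| \le (2 \rho_{sup}(\hat{E}) + \rho_{sup}(\hat{E})^2)\, \rho_{sup}(\hat{\mathrm{cov}}_{Z,i})$ together with $|a_{i,j} + b_{i,j}| \le (1 + \rho_{sup}(\hat{\bar{\mathrm{cov}}}_{Z,K}^{-1/2})^2)\, \rho_{sup}(\hat{\mathrm{cov}}_{Z,i})$. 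Factoring the difference of squares via $a_{i,j}^2 - b_{i,j}^2 = (a_{i,j} - b_{i,j})(a_{i,j} + b_{i,j})$, I obtain a bound of the form $|a_{i,j}^2 - b_{i,j}^2| \le R_K \, \rho_{sup}(\hat{\mathrm{cov}}_{Z,i})^2$, where $R_K = O_p(1/\sqrt{K})$ collects all the $\hat{E}$-factors and is uniform in $i$ and $j$.

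Finally I would sum over $i$ and $j$. The index $j$ only contributes a constant factor $p$, and
\[
\left| \sum_{i=1}^K \sum_{j=1}^p (a_{i,j}^2 - b_{i,j}^2) \right|
\le p\, R_K \sum_{i=1}^K \rho_{sup}(\hat{\mathrm{cov}}_{Z,i})^2.
\]
By \eqref{supp:eq:lem:bound:deux} with $r = 2$ we have $\mathbb{E}[\rho_{sup}(\hat{\mathrm{cov}}_{Z,i})^2] \le C_{sup}$, so $\sum_{i=1}^K \rho_{sup}(\hat{\mathrm{cov}}_{Z,i})^2 = O_p(K)$ by Markov's inequality. Hence the whole expression is $O_p(1/\sqrt{K}) \cdot O_p(K) = O_p(\sqrt{K}) = o_p(K)$, which is the claim. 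The main obstacle is the first step: justifying $\rho_{sup}(\hat{\bar{\mathrm{cov}}}_{Z,K}^{-1/2} - I_p) = O_p(1/\sqrt{K})$ rigorously, since the inverse square root is only well-behaved once we know that $\hat{\bar{\mathrm{cov}}}_{Z,K}$ stays away from the boundary of the positive-definite cone; this is handled by conditioning on the event, of probability tending to one, that $\hat{\bar{\mathrm{cov}}}_{Z,K}$ lies within a fixed neighbourhood of $I_p$. The remaining steps are elementary operator-norm manipulations.
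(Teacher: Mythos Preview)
Your proof is correct and follows the same overall strategy as the paper: factor $a_{i,j}^2 - b_{i,j}^2 = (a_{i,j}+b_{i,j})(a_{i,j}-b_{i,j})$, control the difference factor via $\rho_{sup}(\hat{\bar{\mathrm{cov}}}_{Z,K}^{-1/2} - I_p) = O_p(K^{-1/2})$, and use \eqref{supp:eq:lem:bound:deux} to handle the sum over $i$. The handling of the inverse square root (conditioning on the event $\lambda_{\inf}(\hat{\bar{\mathrm{cov}}}_{Z,K}) \geq \epsilon$, whose probability tends to one) is exactly what the paper does as well.

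The execution differs in one pleasant way. The paper splits the bound as $\bigl|\sum_{i,j} a_{i,j} b_{i,j}\bigr| \leq \bigl(\max_{i,j}|a_{i,j}+b_{i,j}|\bigr)\bigl(\sum_{i,j}|a_{i,j}-b_{i,j}|\bigr)$ and then invokes a Gaussian maximal inequality (Equation~A.3 in \cite{chatterjee2014superconcentration}) to show $\max_{i,j}|a_{i,j}+b_{i,j}| = O_p(\log K)$, arriving at $O_p(\log K)\cdot O_p(\sqrt{K}) = o_p(K)$. You instead bound both factors pointwise by multiples of $\rho_{sup}(\hat{\mathrm{cov}}_{Z,i})$, obtaining $|a_{i,j}^2 - b_{i,j}^2| \leq R_K\,\rho_{sup}(\hat{\mathrm{cov}}_{Z,i})^2$ with $R_K = O_p(K^{-1/2})$, and then sum using the second-moment bound \eqref{supp:eq:lem:bound:deux}. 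This is more elementary: it avoids the maximal inequality entirely and yields the sharper rate $O_p(\sqrt{K})$ rather than $O_p(\sqrt{K}\log K)$. Both routes suffice, but yours is the cleaner one.
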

	
	\begin{proof}
		We have
		\begin{flalign*}
			&
			\left|
			\sum_{i=1}^K
			\sum_{j=1}^p
			\left(
			U_j'
			\hat{\mathrm{cov}}_{Z,i}
			U_j
			\right)^2
			-
			\sum_{i=1}^K
			\sum_{j=1}^p
			\left(
			U_j'
			\hat{\bar{\mathrm{cov}}}_{Z,K}^{-1/2}
			\hat{\mathrm{cov}}_{Z,i}
			\hat{\bar{\mathrm{cov}}}_{Z,K}^{-1/2}
			U_j
			\right)^2
			\right|
			&
			\\
			& 
			=
			\Big|
			\sum_{i=1}^K
			\sum_{j=1}^p
			\left(
			\left[
			U_j'
			\hat{\mathrm{cov}}_{Z,i}
			U_j
			\right]
			+
			\left[
			U_j'
			\hat{\bar{\mathrm{cov}}}_{Z,K}^{-1/2}
			\hat{\mathrm{cov}}_{Z,i}
			\hat{\bar{\mathrm{cov}}}_{Z,K}^{-1/2}
			U_j
			\right]
			\right)
			\\
			&
			\left[
			U_j'
			\left\{
			\hat{\bar{\mathrm{cov}}}_{Z,K}^{-1/2}
			\hat{\mathrm{cov}}_{Z,i}
			\hat{\bar{\mathrm{cov}}}_{Z,K}^{-1/2}
			-
			\hat{\mathrm{cov}}_{Z,i}
			\right\}
			U_j
			\right]
			\Big| 
			\\
			&
			=
			\sum_{i=1}^K
			\sum_{j=1}^p
			a_{i,j}
			b_{i,j},
		\end{flalign*}
		say.
		Let $0 < \epsilon < 1$. We have from \eqref{supp:eq:lem:bound:trois} that 
		$P( E_{\epsilon,K} ) \to 1$ as $K \to \infty$, where $E_{\epsilon,K}$ is the event $\lambda_{inf}( \hat{\bar{\mathrm{cov}}}_{Z,K} ) \geq \epsilon$. 
		
		We have, under the event $E_{\epsilon,K}$, 
		\begin{align*}
			\max_{ \substack{i=1,...,K \\ j=1,...,p }}
			| a_{i,j}|
			\leq &
			\max_{ \substack{i=1,...,K}}
			\rho_{sup}(  \hat{\mathrm{cov}}_{Z,i}  )
			+
			\max_{ \substack{i=1,...,K}}
			\frac{1}{\epsilon} 
			\rho_{sup}(  \hat{\mathrm{cov}}_{Z,i}  ).
		\end{align*}
		Then, since $p$ and $s$ are fixed, from the Cauchy-Schwarz inequality and from Condition \ref{supp:cond:max:variance}, we obtain
		\begin{align*}
			\max_{ \substack{i=1,...,K}}
			\rho_{sup}(  \hat{\mathrm{cov}}_{Z,i}  )
			\leq &
			C_{sup}
			\max_{ \substack{i=1,...,K \\ a=1,...,s \\ k=1,...,p}}
			\left[
			Z_{(i-1)s+a}^{(k)}
			\right]^2
			\\
			\leq &
			C_{sup}
			\max_{ \substack{i=1,...,K \\ a=1,...,s \\ k=1,...,p}}
			\frac{1}{
				\mathbb{E} \left(
				\left[
				Z_{(i-1)s+a}^{(k)}
				\right]^2
				\right)
			}
			\left[
			Z_{(i-1)s+a}^{(k)}
			\right]^2
			\\
			= &
			O_p( \log(K) )
		\end{align*}
		from Equation A.3 in \cite{chatterjee2014superconcentration}. Hence, since $P( E_{\epsilon,K} ) \to 1$ as $K \to \infty$, we obtain
		\begin{equation} \label{supp:eq:max:a:ologK}
			\max_{ \substack{i=1,...,K \\ j=1,...,p }}
			| a_{i,j}|
			=
			O_p(\log(K)).
		\end{equation}
		We have
		\begin{align*}
			\sum_{i=1}^K
			\sum_{j=1}^p
			\left|
			b_{i,j}
			\right|
			= &
			\sum_{i=1}^K
			\sum_{j=1}^p
			\left|
			U_j'
			\left\{
			\hat{\bar{\mathrm{cov}}}_{Z,K}^{-1/2}
			\hat{\mathrm{cov}}_{Z,i}
			\hat{\bar{\mathrm{cov}}}_{Z,K}^{-1/2}
			-
			\hat{\mathrm{cov}}_{Z,i}
			\right\}
			U_j
			\right|
			\\
			\leq &
			\sum_{i=1}^K
			\sum_{j=1}^p
			\rho_{sup}
			\left(
			\hat{\bar{\mathrm{cov}}}_{Z,K}^{-1/2}
			\hat{\mathrm{cov}}_{Z,i}
			\hat{\bar{\mathrm{cov}}}_{Z,K}^{-1/2}
			-
			\hat{\mathrm{cov}}_{Z,i}
			\right)
			\\
			\leq &
			\sum_{i=1}^K
			\sum_{j=1}^p
			\rho_{sup}
			\left(
			\hat{\bar{\mathrm{cov}}}_{Z,K}^{-1/2}
			\hat{\mathrm{cov}}_{Z,i}
			\left[
			\hat{\bar{\mathrm{cov}}}_{Z,K}^{-1/2}
			-
			I_p
			\right]
			\right)
			\\
			& +
			\sum_{i=1}^K
			\sum_{j=1}^p
			\rho_{sup}
			\left(
			\left[
			\hat{\bar{\mathrm{cov}}}_{Z,K}^{-1/2}
			-
			I_p
			\right]
			\hat{\mathrm{cov}}_{Z,i}
			\right)
			\\
			\leq &
			\rho_{sup}
			\left(
			\hat{\bar{\mathrm{cov}}}_{Z,K}^{-1/2}
			-
			I_p
			\right)
			\sum_{i=1}^K
			\sum_{j=1}^p
			\rho_{sup}
			\left(
			\hat{\mathrm{cov}}_{Z,i}
			\right)
			\rho_{sup}
			\left(
			\hat{\bar{\mathrm{cov}}}_{Z,K}^{-1/2}
			\right)
			\\
			& +
			\rho_{sup}
			\left(
			\hat{\bar{\mathrm{cov}}}_{Z,K}^{-1/2}
			-
			I_p
			\right)
			\sum_{i=1}^K
			\sum_{j=1}^p
			\rho_{sup}
			\left(
			\hat{\mathrm{cov}}_{Z,i}
			\right).
		\end{align*}
		In the above display, the two last sums are of order $O_p(K)$, from \eqref{supp:eq:lem:bound:deux} and from the fact that $P( E_{\epsilon,K} ) \to 1$ as $K \to \infty$. Furthermore
		\begin{align*}
			\rho_{sup}
			\left(
			\hat{\bar{\mathrm{cov}}}_{Z,K}^{-1/2}
			-
			I_p
			\right)
			\leq &
			\rho_{sup}
			\left(
			\hat{\bar{\mathrm{cov}}}_{Z,K}^{-1/2}
			\right)
			\rho_{sup}
			\left(
			\hat{\bar{\mathrm{cov}}}_{Z,K}^{1/2}
			-
			I_p
			\right)
			\\
			= &
			O_p(1)
			\rho_{sup}
			\left(
			\hat{\bar{\mathrm{cov}}}_{Z,K}^{1/2}
			-
			I_p
			\right).
		\end{align*}
		On the event $E_{\epsilon,K}$, it is well known that there exits a finite constant $C_{sup,\epsilon}$ such that
		\[
		\rho_{sup}
		\left(
		\hat{\bar{\mathrm{cov}}}_{Z,K}^{1/2}
		-
		I_p
		\right)
		\leq 
		C_{sup,\epsilon}
		\rho_{sup}
		\left(
		\hat{\bar{\mathrm{cov}}}_{Z,K}
		-
		I_p
		\right).
		\]
		Hence, from \eqref{supp:eq:lem:bound:trois}, we obtain
		\[
		\sum_{i=1}^K
		\sum_{j=1}^p
		\left|
		b_{i,j}
		\right|
		=
		O_p( \sqrt{K} ).
		\]
		Hence, from \eqref{supp:eq:max:a:ologK}, the proof is concluded.
	\end{proof}
	
	\begin{lem} \label{supp:lem:law:large:number}
		Assume that conditions \ref{supp:cond:block:independence} and \ref{supp:cond:max:variance} hold. Then, we have, for any fixed $p \times p$ orthogonal matrix $U$, with rows $U_1',\ldots,U_p'$,
		\[
		\sum_{i=1}^K
		\sum_{j=1}^p
		\left(
		U_j'
		\hat{\mathrm{cov}}_{Z,i}
		U_j
		\right)^2
		-
		\sum_{i=1}^K
		\sum_{j=1}^p
		\mathbb{E}
		\left(
		\left(
		U_j'
		\hat{\mathrm{cov}}_{Z,i}
		U_j
		\right)^2
		\right)
		=
		o_p(K).
		\]
	\end{lem}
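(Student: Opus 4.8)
The plan is to prove the statement by a second-moment (Chebyshev) argument that exploits the finite dependence range of the block covariance matrices. Since $p$ is fixed, it suffices to prove, for each fixed $j \in \{1,\ldots,p\}$, that $\sum_{i=1}^K (W_{i,j} - \mathbb{E}(W_{i,j})) = o_p(K)$, where I abbreviate $W_{i,j} := (U_j' \hat{\mathrm{cov}}_{Z,i} U_j)^2$; summing the $p$ resulting statements then yields the claim.

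First I would record two structural facts. Since $U$ is orthogonal, its row $U_j$ satisfies $\|U_j\|_2 = 1$, so $|U_j' \hat{\mathrm{cov}}_{Z,i} U_j| \leq \rho_{sup}(\hat{\mathrm{cov}}_{Z,i})$ and hence $W_{i,j} \leq \rho_{sup}(\hat{\mathrm{cov}}_{Z,i})^2$. Applying \eqref{supp:eq:lem:bound:deux} of Lemma \ref{supp:lem:basic:bounds} with $r = 4$ gives a uniform bound $\mathbb{E}(W_{i,j}^2) \leq \mathbb{E}(\rho_{sup}(\hat{\mathrm{cov}}_{Z,i})^4) \leq C_{sup}$, and in particular $\mathrm{Var}(W_{i,j}) \leq C_{sup}$ uniformly in $i$. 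Second, Condition \ref{supp:cond:block:independence} makes $\hat{\mathrm{cov}}_{Z,i}$ and $\hat{\mathrm{cov}}_{Z,i'}$ independent whenever $|i - i'| \geq L$, so that $W_{i,j}$ and $W_{i',j}$ are also independent in that range and thus $\mathrm{Cov}(W_{i,j}, W_{i',j}) = 0$.

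Then I would bound the variance of the centered sum. Writing $S_{K,j} = \sum_{i=1}^K (W_{i,j} - \mathbb{E}(W_{i,j}))$, we have $\mathrm{Var}(S_{K,j}) = \sum_{i,i'=1}^K \mathrm{Cov}(W_{i,j}, W_{i',j})$, where by the previous step only the $O(KL)$ pairs with $|i-i'| < L$ contribute. Each surviving covariance is bounded by $C_{sup}$ via the Cauchy--Schwarz inequality together with the uniform second-moment bound on $W_{i,j}$. Since $L$ is fixed, this gives $\mathrm{Var}(S_{K,j}) \leq C_{sup} K$. Chebyshev's inequality then yields, for any $\eta > 0$, that $P(|S_{K,j}| > \eta K) \leq \mathrm{Var}(S_{K,j})/(\eta K)^2 \leq C_{sup}/(\eta^2 K) \to 0$, i.e.\ $S_{K,j} = o_p(K)$.

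There is no serious obstacle here: the argument is the standard weak law of large numbers for $L$-dependent, uniformly $L^2$-bounded summands. The only points requiring care are (i) verifying the uniform moment bound $\mathbb{E}(W_{i,j}^2) \leq C_{sup}$, which is precisely where the arbitrary-order moment bound \eqref{supp:eq:lem:bound:deux} is needed, and (ii) tracking that only $O(K)$ rather than $O(K^2)$ covariance terms survive because of the finite dependence range $L$, which is exactly what converts a naive $O(K^2)$ variance estimate into the $O(K)$ bound that beats the $o_p(K)$ threshold.
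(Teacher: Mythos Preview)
Your proposal is correct and follows essentially the same approach as the paper: both reduce to a second-moment/Chebyshev argument for $L$-dependent summands, with the key uniform bound $\mathbb{E}\big[(U_j'\hat{\mathrm{cov}}_{Z,i}U_j)^4\big] \leq \mathbb{E}\big[\rho_{sup}(\hat{\mathrm{cov}}_{Z,i})^4\big] \leq C_{sup}$ coming from \eqref{supp:eq:lem:bound:deux}. You simply spell out the variance counting and the Chebyshev step that the paper leaves implicit.
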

	\begin{proof}
		Since the quantity to bound (in absolute value) is of the form
		\begin{equation} \label{supp:eq:generic:sum:two}
			\sum_{i=1}^K
			a_i,
		\end{equation}
		where the $a_i$ are centered random variables where $a_i$ and $a_j$ are independent for $|i - j| \geq L$, it is sufficient to show that
		\[
		\max_{i=1,...,K}
		\max_{j=1,...,p}
		\mathbb{E}
		\left(
		\left(
		U_j'
		\hat{\mathrm{cov}}_{Z,i}
		U_j
		\right)^4
		\right)
		=O(1)
		\]
		as $K \to \infty$. We have
		\[
		\mathbb{E}
		\left(
		\left(
		U_j'
		\hat{\mathrm{cov}}_{Z,i}
		U_j
		\right)^4
		\right)
		\leq
		\mathbb{E}
		\left(
		\rho_{sup}
		\left(
		\hat{\mathrm{cov}}_{Z,i}
		\right)^4
		\right),
		\]
		so the proof is concluded because of \eqref{supp:eq:lem:bound:deux}.
	\end{proof}
	
	\begin{lem}\label{supp:lem:sup:bound}
		Assume that conditions \ref{supp:cond:block:independence} and \ref{supp:cond:max:variance} hold.
		Recall that $\mathcal{O}_p$ is the set of $p \times p$ orthogonal matrices. For $U \in \mathcal{O}_p$ we let $U_j'$ be the $j$-th row of $U$. We have
		\begin{flalign*}
			&
			\sup_{ U \in \mathcal{O}_p }
			\left|
			\sum_{i=1}^K
			\sum_{j=1}^p
			\left(
			U_j'
			\hat{\bar{\mathrm{cov}}}_{Z,K}^{-1/2}
			\hat{\mathrm{cov}}_{Z,i}
			\hat{\bar{\mathrm{cov}}}_{Z,K}^{-1/2}
			U_j
			\right)^2
			-
			\sum_{i=1}^K
			\sum_{j=1}^p
			\left(
			U_j'
			\mathrm{cov}_{Z,i}
			U_j
			\right)^2
			\right.
			&
			\\
			& -
			\left.
			\frac{2}{s^2}
			\sum_{i=1}^K
			\sum_{a,b=1}^s
			\sum_{j=1}^p
			\left(
			U_j'
			D_{Z,i}^{(a,b)}
			U_j
			\right)^2
			\right| 
			&
			\\
			&
			=
			o_p(K).
			&
		\end{flalign*}
	\end{lem}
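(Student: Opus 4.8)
The plan is to recognize that the deterministic quantity being subtracted is exactly an expectation, and then to upgrade a pointwise-in-$U$ convergence into a uniform-in-$U$ one via a compactness plus equicontinuity argument. First I would observe, using Lemma~\ref{supp:lem:calcul:esp:cost}, that for every fixed $U \in \mathcal{O}_p$,
\[
\sum_{i=1}^K \sum_{j=1}^p \left( U_j' \mathrm{cov}_{Z,i} U_j \right)^2 + \frac{2}{s^2} \sum_{i=1}^K \sum_{a,b=1}^s \sum_{j=1}^p \left( U_j' D_{Z,i}^{(a,b)} U_j \right)^2 = \sum_{i=1}^K \sum_{j=1}^p \mathbb{E}\left[ \left( U_j' \hat{\mathrm{cov}}_{Z,i} U_j \right)^2 \right].
\]
Combining this with Lemma~\ref{supp:lem:removing:hat} (which replaces the whitened criterion by the unwhitened one up to $o_p(K)$) and Lemma~\ref{supp:lem:law:large:number} (which replaces the unwhitened criterion by its expectation up to $o_p(K)$), the expression inside the supremum, evaluated at any fixed $U$, is $o_p(K)$. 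Thus the lemma reduces to uniformizing this pointwise statement over the compact group $\mathcal{O}_p$.

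The second step is a stochastic equicontinuity estimate. Writing $F_K(U)$ for $K^{-1}$ times the whitened criterion and $G_K(U)$ for $K^{-1}$ times the deterministic approximation, both are degree-four polynomials in the entries of $U$. Using the identity $(U_j' M U_j)^2 - (\tilde{U}_j' M \tilde{U}_j)^2 = (U_j' M U_j - \tilde{U}_j' M \tilde{U}_j)(U_j' M U_j + \tilde{U}_j' M \tilde{U}_j)$ together with $|U_j' M U_j| \leq \rho_{sup}(M)$ and $|U_j' M U_j - \tilde{U}_j' M \tilde{U}_j| \leq 2 \rho_{sup}(M)\, \| U_j - \tilde{U}_j \|$ for unit vectors, one gets
\[
| F_K(U) - F_K(\tilde{U}) | \leq \frac{4p}{K} \sum_{i=1}^K \rho_{sup}\!\left( \hat{\bar{\mathrm{cov}}}_{Z,K}^{-1/2} \hat{\mathrm{cov}}_{Z,i} \hat{\bar{\mathrm{cov}}}_{Z,K}^{-1/2} \right)^2 \| U - \tilde{U} \|,
\]
and an analogous bound for $G_K$ with $\mathrm{cov}_{Z,i}$ and $D_{Z,i}^{(a,b)}$ in place of the whitened blocks. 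The deterministic Lipschitz constant $L_K^G$ is $O(1)$ by \eqref{supp:eq:lem:bound:un} and the Cauchy--Schwarz bound $|[D_{Z,i}^{(a,b)}]_{k,k}| \leq C_{sup}$ coming from Condition~\ref{supp:cond:max:variance}. For $F_K$, I would work on the event $E_{\epsilon,K} = \{ \lambda_{inf}(\hat{\bar{\mathrm{cov}}}_{Z,K}) \geq \epsilon \}$, which by \eqref{supp:eq:lem:bound:trois} has probability tending to $1$; there $\rho_{sup}(\hat{\bar{\mathrm{cov}}}_{Z,K}^{-1/2}) \leq \epsilon^{-1/2}$, so the random Lipschitz constant $L_K^F$ is bounded by a constant times $K^{-1} \sum_i \rho_{sup}(\hat{\mathrm{cov}}_{Z,i})^2$, which is $O_p(1)$ by \eqref{supp:eq:lem:bound:deux} with $r=2$ and Markov's inequality.

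The final step is the standard net argument exploiting compactness of $\mathcal{O}_p$. Fix $\eta > 0$ and cover $\mathcal{O}_p$ by a finite $\eta$-net $U^{(1)}, \ldots, U^{(N_\eta)}$ in Frobenius norm, with $N_\eta$ independent of $K$. For arbitrary $U$, choosing the nearest net point $U^{(m)}$ gives
\[
| F_K(U) - G_K(U) | \leq (L_K^F + L_K^G)\, \eta + \max_{1 \leq m \leq N_\eta} | F_K(U^{(m)}) - G_K(U^{(m)}) |.
\]
The maximum over the finite net tends to $0$ in probability, being a finite maximum of the pointwise-$o_p(1)$ terms from the first step, while $(L_K^F + L_K^G)\eta = O_p(1)\,\eta$. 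Taking the supremum over $U$, then letting $K \to \infty$ and finally $\eta \to 0$ gives $\sup_{U \in \mathcal{O}_p} | F_K(U) - G_K(U) | = o_p(1)$, which is exactly the asserted $o_p(K)$ bound. I expect the main obstacle to be the control of the stochastic Lipschitz constant $L_K^F$: one must simultaneously tame the inverse square-root whitening matrix $\hat{\bar{\mathrm{cov}}}_{Z,K}^{-1/2}$, which is only well behaved on the high-probability event $E_{\epsilon,K}$, and the average of squared operator norms $K^{-1}\sum_i \rho_{sup}(\hat{\mathrm{cov}}_{Z,i})^2$. Both ingredients are furnished by Lemma~\ref{supp:lem:basic:bounds}, so once the equicontinuity is secured the compactness argument is routine.
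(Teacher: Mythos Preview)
Your proposal is correct and follows essentially the same approach as the paper: both establish the pointwise-in-$U$ statement by combining Lemmas~\ref{supp:lem:calcul:esp:cost}, \ref{supp:lem:removing:hat} and \ref{supp:lem:law:large:number}, and then uniformize over the compact set $\mathcal{O}_p$ via an equicontinuity/Lipschitz bound controlled by $K^{-1}\sum_i \rho_{sup}(\hat{\mathrm{cov}}_{Z,i})^2 = O_p(1)$ from Lemma~\ref{supp:lem:basic:bounds}. The only cosmetic difference is that the paper phrases the equicontinuity step as a gradient bound (showing $\sup_{\|U_j\|=1}\|\sum_i \nabla_{U_j}[\,\cdot\,]\| = O_p(K)$) rather than the direct difference estimate you wrote, but the underlying ingredients and logic are identical.
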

	
	\begin{proof}
		
		From Lemmas \ref{supp:lem:calcul:esp:cost}, \ref{supp:lem:removing:hat} and \ref{supp:lem:law:large:number}, we have, for any fixed $U \in  \mathcal{O}_p$ that
		\begin{align*}
			&
			\left|
			\sum_{i=1}^K
			\sum_{j=1}^p
			\left(
			U_j'
			\hat{\bar{\mathrm{cov}}}_{Z,K}^{-1/2}
			\hat{\mathrm{cov}}_{Z,i}
			\hat{\bar{\mathrm{cov}}}_{Z,K}^{-1/2}
			U_j
			\right)^2
			-
			\sum_{i=1}^K
			\sum_{j=1}^p
			\left(
			U_j'
			\mathrm{cov}_{Z,i}
			U_j
			\right)^2
			\right.
			&
			\\
			& -
			\left.
			\frac{2}{s^2}
			\sum_{i=1}^K
			\sum_{a,b=1}^s
			\sum_{j=1}^p
			\left(
			U_j'
			D_{Z,i}^{(a,b)}
			U_j
			\right)^2
			\right| 
			&
			\\
			&
			=
			o_p(K).
			&
		\end{align*}
		
		Hence, because $ \mathcal{O}_p$ is compact, it is sufficient to show that, letting $\nabla_{U_j} \left[ f(U_j) \right]$ denote the gradient of a function $f: \mathbb{R}^p \to \mathbb{R}$ evaluated at $U_j \in \mathbb{R}^p$, we have for $j=1,...,p$,
		\[
		\sup_{|| U_j || = 1}
		\left| \left| 
		\sum_{i=1}^K
		\nabla_{U_j}
		\left[
		\left(
		U_j'
		\hat{\bar{\mathrm{cov}}}_{Z,K}^{-1/2}
		\hat{\mathrm{cov}}_{Z,i}
		\hat{\bar{\mathrm{cov}}}_{Z,K}^{-1/2}
		U_j
		\right)^2
		\right]
		\right|
		\right|
		=
		O_p(K),
		\]
		\[
		\sup_{|| U_j || = 1}
		\left| \left|
		\sum_{i=1}^K
		\nabla_{U_j}
		\left[
		\left(
		U_j'
		\mathrm{cov}_{Z,i}
		U_j
		\right)^2
		\right]
		\right|
		\right|
		=
		O(K)
		\]
		and
		\[
		\sup_{|| U_j || = 1}
		\left|
		\left|
		\frac{2}{s^2}
		\sum_{i=1}^K
		\sum_{a,b=1}^s
		\nabla_{U_j}
		\left[
		\left(
		U_j'
		D_{Z,i}^{(a,b)}
		U_j
		\right)^2
		\right] 
		\right|
		\right|
		=
		O(K).
		\]
		From \eqref{supp:eq:lem:bound:deux} and \eqref{supp:eq:lem:bound:trois},
		in order to prove the three above displays, it is sufficient to show that for a sequence $(M_j)_{j \in \mathbb{N}}$ of random symmetric $p \times p$ matrices and for a random matrix $N$ satisfying
		\[
		\max_{i \in \mathbb{N}}
		\mathbb{E} 
		\left[
		\left(
		\rho_{sup}(M_i)
		\right)^2
		\right]
		\leq
		C_{sup}
		\]
		and
		\[
		\rho_{sup}(N) = O_p(1),
		\]
		we have
		\[
		\sup_{|| U_j || = 1}
		\left| \left|
		\sum_{i=1}^K
		\nabla_{U_j}
		\left[
		\left(
		U_j'
		N
		M_i
		N
		U_j
		\right)^2
		\right]
		\right| \right|
		=
		O_p(K).
		\]
		We have
		\begin{align*}
			\sup_{|| U_j || = 1}
			\left|
			\left|
			\sum_{i=1}^K
			\nabla_{U_j}
			\left[
			\left(
			U_j'
			N
			M_i
			N
			U_j
			\right)^2
			\right]
			\right|
			\right|
			=
			&
			\sup_{|| U_j || = 1}
			\left|
			\left|
			\sum_{i=1}^K
			4
			\left(
			U_j'
			N
			M_i
			N
			U_j
			\right)
			N
			M_i
			N
			U_j
			\right|
			\right|
			\\
			\leq
			&
			C_{sup} O_p(1)
			\sum_{i=1}^K
			\rho_{sup}( M_i )^2,
		\end{align*}
		so the proof is concluded.
	\end{proof}
	
	The next condition is a restatement of Condition \ref{cond:asymptotically:distinct:eigenvalues}.
	\begin{cond} \label{supp:cond:asymptotically:distinct:eigenvalues}
		There exists a strictly increasing sequence $(i_k)_{k \in \mathbb{N}}$, such that $i_k \in \mathbb{N}$ for all $k \in \mathbb{N}$ and such that, with $N_K = \# \{ k =1,...,K ; i_k \leq K \}$,
		we have $\liminf  N_K /K >0$ as $K \to \infty$. There exists $\delta >0$, such that
		\[
		\inf_{k \in \mathbb{N}}
		\min_{ \substack{ i,j=1,...,p \\ i \neq j } }
		\left|
		\left [\mathrm{cov}_{Z,i_k} \right]_{i,i}
		-
		\left [\mathrm{cov}_{Z,i_k} \right]_{j,j}
		\right|
		\geq \delta
		\]
		and 
		\[
		\inf_{k \in \mathbb{N}}
		\min_{ \substack{ i=1,...,p  } }
		\left [\mathrm{cov}_{Z,i_k} \right]_{i,i}
		\geq \delta.
		\]
	\end{cond}

	\begin{thm}\label{supp:thm:consistency}
		Assume that conditions  \ref{supp:cond:block:independence}, \ref{supp:cond:max:variance} and \ref{supp:cond:asymptotically:distinct:eigenvalues} hold.
		Then for any sequence $\hat{U}_{Z,K}$ in \eqref{supp:eq:ref:hatUZK}, there exists a sequence $\hat{G}_K \in \mathcal{G}_p$ such that
		\[
		\hat{G}_K \hat{U}_{Z,K}
		\overset{p}{\underset{K \to \infty}{\to}}
		I_p
		\] 
		and 
		\[
		\hat{G}_K \hat{W}_{Z,K}
		\overset{p}{\underset{K \to \infty}{\to}}
		I_p.
		\] 
	\end{thm}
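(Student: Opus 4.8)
The plan is to view $\hat{U}_{Z,K}$ as the argmax of a random objective and to run a standard $M$-estimation consistency argument, whose two ingredients are (i) uniform convergence of the objective to a deterministic surrogate and (ii) a maximum of the surrogate that is well separated \emph{uniformly in} $K$. Writing the rescaled whitened criterion and its population surrogate as
\[
\hat{\Psi}_K(U)=\frac{1}{K}\sum_{i=1}^K\sum_{j=1}^p\Bigl(U_j'\hat{\bar{\mathrm{cov}}}_{Z,K}^{-1/2}\hat{\mathrm{cov}}_{Z,i}\hat{\bar{\mathrm{cov}}}_{Z,K}^{-1/2}U_j\Bigr)^2,
\]
\[
\Psi_K(U)=\frac{1}{K}\sum_{i=1}^K\sum_{j=1}^p\bigl(U_j'\mathrm{cov}_{Z,i}U_j\bigr)^2+\frac{2}{Ks^2}\sum_{i=1}^K\sum_{a,b=1}^s\sum_{j=1}^p\bigl(U_j'D_{Z,i}^{(a,b)}U_j\bigr)^2,
\]
ingredient (i) is precisely the conclusion of Lemma \ref{supp:lem:sup:bound}, i.e. $\sup_{U\in\mathcal{O}_p}|\hat{\Psi}_K(U)-\Psi_K(U)|=o_p(1)$.

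For the location of the maximum I would exploit that, by independence and Gaussianity, every $\mathrm{cov}_{Z,i}$ and every $D_{Z,i}^{(a,b)}$ is diagonal, so each summand reads $(\sum_k P_{jk}\lambda_k)^2$ with $P_{jk}=(U_j)_k^2$ and $\lambda$ the relevant diagonal. Since $P$ is doubly stochastic for $U\in\mathcal{O}_p$, the vector $(\sum_k P_{jk}\lambda_k)_j$ is majorized by $(\lambda_k)_k$, so by Schur convexity of $t\mapsto t^2$ each block- and each $D$-term attains its maximum over $\mathcal{O}_p$ on the signed permutations $\mathcal{G}_p$ (strictly so when the $\lambda_k$ are distinct), all of which give the common value $\sum_k\lambda_k^2$. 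Consequently $\Psi_K$ is maximized over $\mathcal{O}_p$ on $\mathcal{G}_p$.

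The quantitative, $K$-uniform separation is the crux and is where Condition \ref{supp:cond:asymptotically:distinct:eigenvalues} enters. Fix $\eta>0$ and put $\Delta_K(U)=\max_{\mathcal{O}_p}\Psi_K-\Psi_K(U)\ge0$. As every summand is nonnegative and the $D$-terms can only increase the gap, it suffices to bound from below the first sum restricted to the good blocks $i_k$. For these, the diagonal entries of $\mathrm{cov}_{Z,i_k}$ are $\delta$-separated and, by \eqref{supp:eq:lem:bound:un}, lie in the fixed compact interval $[\delta,C_{sup}]$, so the admissible eigenvalue vectors form a compact set on which the per-block gap $g(\lambda,U)=\sum_k\lambda_k^2-\sum_j(\sum_k P_{jk}\lambda_k)^2$ is continuous and vanishes only at $U\in\mathcal{G}_p$; compactness then yields a constant $c(\eta)>0$ with $g(\lambda,U)\ge c(\eta)$ for all admissible $\lambda$ whenever $d(U,\mathcal{G}_p)\ge\eta$. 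Summing over the good blocks and invoking $\liminf N_K/K>0$ gives $\Delta_K(U)\ge c'\,c(\eta)>0$ for all large $K$ and all $U$ with $d(U,\mathcal{G}_p)\ge\eta$.

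Combining the two ingredients finishes the proof. On the event $\{\sup_U|\hat{\Psi}_K-\Psi_K|<c'c(\eta)/2\}$, which has probability tending to one, the maximizer $\hat{U}_{Z,K}$ cannot have $d(\hat{U}_{Z,K},\mathcal{G}_p)\ge\eta$ (comparing its value with that at a maximizing signed permutation), so $d(\hat{U}_{Z,K},\mathcal{G}_p)\overset{p}{\to}0$. Taking $\hat{G}_K\in\mathcal{G}_p$ to be the inverse of a nearest signed permutation and using that left multiplication by an orthogonal matrix is isometric yields $\hat{G}_K\hat{U}_{Z,K}\overset{p}{\to}I_p$; then $\hat{\bar{\mathrm{cov}}}_{Z,K}-I_p=O_p(K^{-1/2})$ from \eqref{supp:eq:lem:bound:trois} gives $\hat{\bar{\mathrm{cov}}}_{Z,K}^{-1/2}\overset{p}{\to}I_p$, whence $\hat{G}_K\hat{W}_{Z,K}=(\hat{G}_K\hat{U}_{Z,K})\hat{\bar{\mathrm{cov}}}_{Z,K}^{-1/2}\overset{p}{\to}I_p$ by the continuous mapping theorem. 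I expect the main obstacle to be the uniform separation of the third paragraph: showing the majorization gap is bounded below simultaneously over the moving eigenvalue configurations and over $K$, which is exactly what forces both the compactness argument and the use of the positive-density subsequence supplied by Condition \ref{supp:cond:asymptotically:distinct:eigenvalues}.
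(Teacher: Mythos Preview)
Your proposal is correct and follows the same $M$-estimation skeleton as the paper: both use Lemma~\ref{supp:lem:sup:bound} for the uniform convergence $\sup_{U\in\mathcal{O}_p}|\hat{\Psi}_K-\Psi_K|=o_p(1)$, both exploit that the population surrogate is maximized on $\mathcal{G}_p$, and both pass to $\hat{W}_{Z,K}$ via \eqref{supp:eq:lem:bound:trois}.

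The difference lies in how the well-separated maximum is established. The paper restricts to a fundamental domain $\mathcal{U}_0\subset\mathcal{O}_p$ (a measurable section of $\mathcal{O}_p/\mathcal{G}_p$ defined by sign and ordering constraints), so that the unique target is $I_p$, and then invokes Theorem~3 of \cite{belouchrani1997blind} to get the per-block gap, combined with compactness and Condition~\ref{supp:cond:asymptotically:distinct:eigenvalues} to make it uniform in $k$. You instead keep the full $\mathcal{G}_p$-orbit as the target set, measure distance $d(U,\mathcal{G}_p)$, and derive the per-block gap by a self-contained majorization/Schur-convexity argument (the doubly stochastic matrix $P_{jk}=(U_j)_k^2$ and convexity of $t\mapsto t^2$), followed by a joint compactness argument over $(\lambda,U)$ with $\lambda$ ranging over the $\delta$-separated box $[\delta,C_{sup}]^p$. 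Your route avoids the external citation and handles the sign/permutation ambiguity more directly; the paper's fundamental-domain device is slightly cleaner for the subsequent asymptotic-normality proof, where one wants a single canonical representative near $I_p$ to parametrize via the matrix exponential. Both arguments are equally rigorous and yield the same conclusion.
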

	
	\begin{proof}
		
		Let $\mathcal{U}_0$ be the set of $p \times p$ orthogonal matrices $U$, with rows $U_1',\ldots,U_p'$, satisfying
		\[
		\mbox{for all $i=1,...,p$}
		~ ~
		\sum_{k=1,...,p} [U_i]_k \geq 0
		~ ~
		\mbox{and the sequence}
		~ ~
		\left( \sum_{j=1}^p j [U_i]^2_j  \right)_{i=1,...,p}
		~ ~
		\mbox{is ascending.}
		\]
		
		Consider a sequence $\hat{U}_{Z,K}$ in \eqref{supp:eq:ref:hatUZK}. Then, there exists a sequence $\hat{G}_K \in \mathcal{G}_p$ such that 
		\[
		\hat{G}_K \hat{U}_{Z,K} 
		\in
		\mathrm{argmax}_{ U \in \mathcal{U}_0 }
		\sum_{i=1}^K
		\left| \left|
		\mathrm{diag}
		\left(
		U
		\hat{\bar{\mathrm{cov}}}_{Z,K}^{-1/2}
		\hat{\mathrm{cov}}_{Z,i}
		\hat{\bar{\mathrm{cov}}}_{Z,K}^{-1/2}
		U'
		\right)
		\right| \right|^2.
		\]
		We use the following shorthand, for $U \in \mathcal{O}_p$ with rows $U_1',\ldots,U_p'$:
		\begin{align*}
			\hat{\Delta}_K(U) &= \frac{1}{K}
			\sum_{i=1}^K
			\sum_{j=1}^p
			\left(
			U_j'
			\hat{\bar{\mathrm{cov}}}_{Z,K}^{-1/2}
			\hat{\mathrm{cov}}_{Z,i}
			\hat{\bar{\mathrm{cov}}}_{Z,K}^{-1/2}
			U_j
			\right)^2, \\
			D_K(U) &= \frac{1}{K}
			\sum_{i=1}^K
			\sum_{j=1}^p
			\left(
			U_j'
			\mathrm{cov}_{Z,i}
			U_j
			\right)^2
			+
			\frac{2}{K s^2}
			\sum_{i=1}^K
			\sum_{a,b=1}^s
			\sum_{j=1}^p
			\left(
			U_j'
			D_{Z,i}^{(a,b)}
			U_j
			\right)^2.
		\end{align*}
		The statement of Lemma \ref{supp:lem:sup:bound} can now be expressed as
		\begin{equation} \label{supp:eq:unif:conv:hat:Delta}
			\sup_{ U \in \mathcal{O}_p }
			\left| \hat{\Delta}_K(U) - D_K(U) \right| = o_p(1).
		\end{equation}
		Let $\epsilon >0$.
		For any $U \in \mathcal{U}_0$, with rows $U_1',\ldots,U_p'$, such that $|| U - I_p || \geq \epsilon$, we have, with $e_j$ the $j-th$ basis column vector of $\mathbb{R}^p$,
		\begin{flalign*}
			& D_K(I_p) - D_K(U)
			= &
			\\
			&
			\frac{1}{K}
			\sum_{i=1}^K
			\sum_{j=1}^p
			\left(
			e_j'
			\mathrm{cov}_{Z,i}
			e_j
			\right)^2
			-
			\frac{1}{K}
			\sum_{i=1}^K
			\sum_{j=1}^p
			\left(
			U_j'
			\mathrm{cov}_{Z,i}
			U_j
			\right)^2
			\\
			&
			+
			\frac{2}{K s^2}
			\sum_{i=1}^K
			\sum_{a,b=1}^s
			\sum_{j=1}^p
			\left(
			e_j'
			D_{Z,i}^{(a,b)}
			e_j
			\right)^2
			-
			\frac{2}{K s^2}
			\sum_{i=1}^K
			\sum_{a,b=1}^s
			\sum_{j=1}^p
			\left(
			U_j'
			D_{Z,i}^{(a,b)}
			U_j
			\right)^2.
			&
		\end{flalign*}
		Since the matrices $\mathrm{cov}_{Z,i}$ and $D_{Z,i}^{(a,b)}$ are diagonal, we obtain, with the notation of Condition \ref{supp:cond:asymptotically:distinct:eigenvalues},
		\begin{align*}
			D_K(I_p) - D_K(U)
			\geq 
			\frac{1}{K}
			\sum_{ \substack{ k=1,...,K \\ i_k \leq K}}
			\sum_{j=1}^p
			\left(
			e_j'
			\mathrm{cov}_{Z,i_k}
			e_j
			\right)^2
			-
			\frac{1}{K}
			\sum_{ \substack{ k=1,...,K \\ i_k \leq K}}
			\sum_{j=1}^p
			\left(
			U_j'
			\mathrm{cov}_{Z,i_k}
			U_j
			\right)^2.
		\end{align*}

		Then, by compacity, from Condition \ref{supp:cond:asymptotically:distinct:eigenvalues} and from Theorem 3 in \cite{belouchrani1997blind}, we can show that
		\[
		\inf_{k \in \mathbb{N}}
		\inf_{\substack{ U \in \mathcal{U}_0 \\ || U - I_p || \geq \epsilon \\  U ~\text{has rows} ~ U_1',\ldots,U_p' }}
		\left(
		\sum_{j=1}^p
		\left(
		e_j'
		\mathrm{cov}_{Z,i_k}
		e_j
		\right)^2
		-
		\sum_{j=1}^p
		\left(
		U_j'
		\mathrm{cov}_{Z,i_k}
		U_j
		\right)^2
		\right)
		\geq C_{inf}.
		\]
		Hence we obtain from Condition \ref{supp:cond:asymptotically:distinct:eigenvalues}
		\begin{equation} \label{supp:eq:unique:maximum:DK}
			\inf_{\substack{ U \in \mathcal{U}_0 \\ || U - I_p || \geq \epsilon }}
			D_K(I_p) - D_K(U)
			\geq 
			\frac{
				N_K C_{inf}
			}
			{
				K
			}
			\geq C_{inf}.
		\end{equation}
		We now have
		\begin{align*}
			P(\| \hat{G}_K \hat{U}_{Z,K}  - I_p \| \geq \epsilon) 
			\leq &
			P \left(
			\sup_{ \substack{ U \in \mathcal{U}_o \\ || U - I_p || \geq \epsilon } }
			\hat{\Delta}_K(U) - \hat{\Delta}_K(I_p)
			\geq 0
			\right)
			\\
			\mbox{[from \eqref{supp:eq:unif:conv:hat:Delta}:]} ~
			= &
			P \left(
			o_p(1)
			+
			\sup_{ \substack{ U \in \mathcal{U}_o \\ || U - I_p || \geq \epsilon } }
			D_K(U) - D_K(I_p)
			\geq 0
			\right)
			\\
			\mbox{[from \eqref{supp:eq:unique:maximum:DK}:]} ~
			\leq &
			P \left(
			o_p(1)
			- C_{inf}
			\geq 0
			\right)
			\\
			\underset{K \to \infty}{\overset{}{\to}}
			&
			0.
		\end{align*}
		This concludes the proof of the first equation of the theorem.
		The second equation follows because, from Lemma \ref{supp:lem:basic:bounds}, the matrix $\hat{\bar{\mathrm{cov}}}_{Z,K}^{-1/2}$ converges to $ I_p$.
	\end{proof}

	\section{Asymptotic normality}

	\begin{lem}\label{supp:lem:exp:matrix}
		Assume that conditions  \ref{supp:cond:block:independence}, \ref{supp:cond:max:variance} and \ref{supp:cond:asymptotically:distinct:eigenvalues} hold.
		Then for any sequence $\hat{U}_{Z,K}$ in \eqref{supp:eq:ref:hatUZK}, there exists a sequence $\hat{G}_K \in \mathcal{G}_p$ and a sequence $\hat{V}_{Z,K} \in \mathcal{U}_p$ such that
		\[
		\hat{V}_{Z,K}
		\overset{p}{\underset{K \to \infty}{\to}} 0
		\]
		and, with probability going to $1$ as $K \to \infty$,
		\[
		\exp( S( \hat{V}_{Z,K} ) )
		=
		\hat{G}_K \hat{U}_{Z,K}
		\] 
		and
		\[
		\nabla_{\hat{V}_{Z,K}}  
		= 0,
		\]
		where $\nabla_{V_0} \in \mathcal{U}_p $ is the gradient evaluated at $V_0 \in \mathcal{U}_p$ of the function $ V \to \hat{\Delta}_{K}(\exp( S( V ) ) )$, with $\hat{\Delta}_{K}$ as in the proof of Theorem \ref{supp:thm:consistency}.
		
	\end{lem}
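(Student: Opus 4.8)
The plan is to combine the consistency result of Theorem~\ref{supp:thm:consistency} with the parametrization of orthogonal matrices near the identity by the matrix exponential of skew-symmetric matrices. First I would record the key invariance of the criterion: writing $\hat{B}_i = \hat{\bar{\mathrm{cov}}}_{Z,K}^{-1/2} \hat{\mathrm{cov}}_{Z,i} \hat{\bar{\mathrm{cov}}}_{Z,K}^{-1/2}$, for any signed permutation $G \in \mathcal{G}_p$ (say $G_{k,\sigma(k)} = s_k$) and any symmetric $M$, one has $(G M G')_{k,k} = s_k^2 M_{\sigma(k),\sigma(k)} = M_{\sigma(k),\sigma(k)}$, so the diagonal of $G M G'$ is merely a reordering of the diagonal of $M$. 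Consequently $\| \mathrm{diag}(G U \hat{B}_i U' G') \| = \| \mathrm{diag}(U \hat{B}_i U') \|$ and hence $\hat{\Delta}_K(GU) = \hat{\Delta}_K(U)$ for every $U \in \mathcal{O}_p$. Applying Theorem~\ref{supp:thm:consistency} furnishes a sequence $\hat{G}_K \in \mathcal{G}_p$ with $\hat{G}_K \hat{U}_{Z,K} \overset{p}{\to} I_p$, and since $U \mapsto \hat{G}_K U$ is a bijection of $\mathcal{O}_p$ preserving the criterion, $\hat{G}_K \hat{U}_{Z,K}$ is itself a maximizer in \eqref{supp:eq:ref:hatUZK}.

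Next I would invoke that $\exp$ restricts to a diffeomorphism from a neighborhood of $0$ in $\mathcal{S}_p$ onto a neighborhood of $I_p$ in the orthogonal matrices of determinant $+1$ (see \cite{hall2015lie}), with smooth inverse given by the principal matrix logarithm $\log$. Since $\hat{G}_K \hat{U}_{Z,K}$ is orthogonal and converges in probability to $I_p$, its determinant — which is continuous in its entries and takes only the values $\pm 1$ — equals $+1$ with probability tending to $1$, and on this event $\hat{G}_K \hat{U}_{Z,K}$ lies in the above neighborhood for $K$ large. I would then define $\hat{V}_{Z,K} = S^{-1}(\log(\hat{G}_K \hat{U}_{Z,K}))$ on this event (using that $S: \mathcal{U}_p \to \mathcal{S}_p$ is a linear bijection) and $\hat{V}_{Z,K} = 0$ on its complement. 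This yields $\exp(S(\hat{V}_{Z,K})) = \hat{G}_K \hat{U}_{Z,K}$ with probability going to $1$, and $\hat{V}_{Z,K} \overset{p}{\to} 0$ by continuity of the logarithm at $I_p$ together with $\hat{G}_K \hat{U}_{Z,K} \overset{p}{\to} I_p$.

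Finally, the vanishing of the gradient I would obtain from a global optimality argument. The map $V \mapsto \exp(S(V))$ sends all of $\mathcal{U}_p$ into $\mathcal{O}_p$, so for every $V \in \mathcal{U}_p$ we have, on the event above, $\hat{\Delta}_K(\exp(S(V))) \le \max_{U \in \mathcal{O}_p} \hat{\Delta}_K(U) = \hat{\Delta}_K(\hat{G}_K \hat{U}_{Z,K}) = \hat{\Delta}_K(\exp(S(\hat{V}_{Z,K})))$. Thus $\hat{V}_{Z,K}$ is a global maximizer over the open set $\mathcal{U}_p \cong \mathbb{R}^{p(p-1)/2}$ of the smooth function $V \mapsto \hat{\Delta}_K(\exp(S(V)))$ (smooth because $\hat{\Delta}_K$ is polynomial in the entries of $U$ and $\exp \circ S$ is analytic); being an interior point, its gradient $\nabla_{\hat{V}_{Z,K}}$ must vanish. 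The conceptual content is routine once consistency is in hand, and the gradient identity then drops out immediately from global optimality; the only point requiring genuine care is the passage to the exponential chart, that is, using the determinant argument to guarantee that the estimator lands in the image of $\exp \circ S$ and within the injectivity radius, so that $\hat{V}_{Z,K}$ is well defined and asymptotically negligible. This is where the bulk of the rigor sits.
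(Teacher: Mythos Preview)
Your proof is correct and follows essentially the same route as the paper: invoke Theorem~\ref{supp:thm:consistency} to bring $\hat{G}_K\hat{U}_{Z,K}$ close to $I_p$, pass to an exponential chart around the identity to produce $\hat{V}_{Z,K}$, and read off the vanishing gradient from global optimality of $\hat{G}_K\hat{U}_{Z,K}$ over $\mathcal{O}_p$. The only technical difference is in how you land in the chart: you argue via the determinant that $\hat{G}_K\hat{U}_{Z,K}\in SO(p)$ with probability going to one and then use the Lie-group exponential $\exp:\mathcal{S}_p\to SO(p)$, whereas the paper applies the matrix exponential on a full ball $B_{0,\epsilon}\subset\mathcal{M}_p$ and deduces skew-symmetry of $\log(\hat{G}_K\hat{U}_{Z,K})$ a posteriori from the identity $\exp(S')=\exp(S)'=\exp(S)^{-1}=\exp(-S)$. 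Both arguments achieve the same end and neither is materially simpler.
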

	\begin{proof}
		From Theorem \ref{supp:thm:consistency}, for any sequence $\hat{U}_{Z,K}$ in \eqref{supp:eq:ref:hatUZK}, there exists a sequence $\hat{G}_K \in \mathcal{G}_p$ such that
		\[
		\hat{G}_K \hat{U}_{Z,K}
		\overset{p}{\underset{K \to \infty}{\to}}
		I_p.
		\] 
		Also, from \eqref{supp:eq:ref:hatUZK},
		\[
		\hat{G}_K \hat{U}_{Z,K}
		\in \mathrm{argmax}_{ U \in \mathcal{U}_p }
		\hat{\Delta}_{K} (U).
		\]
		From Chapter 2 in \cite{hall2015lie}, there exists $\epsilon >0$ such that, with $B_{0,\epsilon} = \{ M \in  \mathcal{M}_p ; || M || \leq \epsilon \}$, the exponential function is bijective from $B_{0,\epsilon}$ to $E$, for some set $E$ containing a neighborhood of $I_p$, with reciprocal function the matrix logarithm function $\log$. Hence, any $U \in \mathcal{O}_p \cap E$ can be written as $\exp(S)$ with $S = \log( U )$. We have $\exp(S) \exp(S)' = I_p$ so that, using Proposition 2.3 in \cite{hall2015lie} we obtain 
		\[
		\exp(S') = \exp(S)' = \exp(S)^{-1} = \exp(-S),
		\]
		so that, applying the logarithm, $S' = -S$. Hence, when $\hat{G} \hat{U}_{Z,K} \in E$, we can write $\hat{G} \hat{U}_{Z,K}  = \exp( S( \hat{V}_{Z,K} ) ) $ for $\hat{V}_{Z,K} \in \mathcal{U}_p$. [We can define $\hat{V}_{Z,K}$ arbitrarily on the event where $\hat{G} \hat{U}_{Z,K} \not \in E$ and the probability of this event goes to zero as $K \to \infty$.]
		
		By continuity of the logarithm function around $I_p$ (see Chapter 2 in \cite{hall2015lie}), we thus have $\hat{V}_{Z,K} \to 0$ in probability as $n \to \infty$. Also, we have on the event $\hat{G} \hat{U}_{Z,K} \in E$, since $\exp(S(V)) \in \mathcal{O}_p$ for $V \in \mathcal{U}_p$,
		\[
		\hat{V}_{Z,K} 
		\in \mathrm{argmax}_{ V \in \mathcal{U}_p }
		\hat{\Delta}_K( \exp(S(V)) ),
		\] 
		and so
		\[
		\nabla_{\hat{V}_{Z,K}} 
		= 0. 
		\]
	\end{proof}

	\begin{lem} \label{supp:lem:equi:gradient}
		Assume that conditions  \ref{supp:cond:block:independence}, \ref{supp:cond:max:variance} and \ref{supp:cond:asymptotically:distinct:eigenvalues} hold.
		Let us write $\hat{C}_i = \hat{\mathrm{cov}}_{Z,i} $, $C_i = \mathrm{cov}_{Z,i} $ and $\hat{\bar{C}}_i = \hat{\bar{\mathrm{cov}}}_{Z,K}^{-1/2} \hat{\mathrm{cov}}_{Z,i} \hat{\bar{\mathrm{cov}}}_{Z,K}^{-1/2}$. Let
		\[
		\hat{T} = 
		-
		\frac{1}{2}
		\left( 
		\hat{\bar{\mathrm{cov}}}_{Z,K}
		-
		I_p
		\right).
		\]
		
		Then, for $1 \leq  j < k \leq p$, recalling that $\nabla_0$ is the gradient of the function $ V \to \hat{\Delta}_{K}(\exp( S( V ) ) )$ evaluated at $0$, we have
		\[
		[\nabla_0]_{j,k} 
		=
		[\bar{\nabla}_0]_{j,k} 
		+ O_p(1/K),
		\]
		with
		\begin{align*}
			[\bar{\nabla}_0]_{j,k} 
			= &
			-
			4
			\frac{1}{K}
			\sum_{i=1}^K
			e_k'
			\hat{C}_i
			e_k
			e_k'
			\hat{C}_i
			e_j
			- 8
			e_k'
			\hat{T}
			\left(
			\frac{1}{K}
			\sum_{i=1}^K
			\mathbb{E}
			\left[
			\hat{C}_i
			e_k
			e_k'
			\hat{C}_i
			e_j
			\right]
			\right)
			\\
			& -
			4
			e_k'
			\hat{T}
			\left(
			\frac{1}{K}
			\sum_{i=1}^K
			\mathbb{E}
			\left[
			\left(
			e_k'
			\hat{C}_i
			e_k
			\right)
			\hat{C}_i
			e_j
			\right]
			\right)
			-
			4
			\left(
			\frac{1}{K}
			\sum_{i=1}^K
			\mathbb{E}
			\left[
			e_k'
			\hat{C}_i
			e_k
			e_k'
			\hat{C}_i
			\right]
			\right)
			\hat{T}
			e_j
			\\
			&+
			4
			\frac{1}{K}
			\sum_{i=1}^K
			e_j'
			\hat{C}_i
			e_j
			e_j'
			\hat{C}_i
			e_k
			+ 8
			e_j'
			\hat{T}
			\left(
			\frac{1}{K}
			\sum_{i=1}^K
			\mathbb{E}
			\left[
			\hat{C}_i
			e_j
			e_j'
			\hat{C}_i
			e_k
			\right]
			\right)
			\\
			& +
			4
			e_j'
			\hat{T}
			\left(
			\frac{1}{K}
			\sum_{i=1}^K
			\mathbb{E}
			\left[
			\left(
			e_j'
			\hat{C}_i
			e_j
			\right)
			\hat{C}_i
			e_k
			\right]
			\right)
			+ 4
			\left(
			\frac{1}{K}
			\sum_{i=1}^K
			\mathbb{E}
			\left[
			e_j'
			\hat{C}_i
			e_j
			e_j'
			\hat{C}_i
			\right]
			\right)
			\hat{T}
			e_k. 
		\end{align*}
	\end{lem}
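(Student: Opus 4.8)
The plan is to first compute the \emph{exact} gradient $\nabla_0$ from the matrix-exponential parametrization, and then to expand the whitening matrix $\hat{\bar{\mathrm{cov}}}_{Z,K}^{-1/2}$ around $I_p$ while tracking orders of magnitude. The derivative of the exponential map at the origin is the identity, so the directional derivative of $V \mapsto \exp(S(V))$ at $V=0$ in the coordinate direction $(j,k)$, $j<k$, is $S$ applied to the corresponding unit, namely $e_j e_k' - e_k e_j'$. Hence the rows $U_m'$ of $U = \exp(S(V))$ satisfy $\partial U_m / \partial V_{j,k}\big|_{V=0} = \delta_{m,j}\, e_k - \delta_{m,k}\, e_j$. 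Substituting into $\hat{\Delta}_K(U) = (1/K)\sum_i \sum_m (U_m' \hat{\bar{C}}_i U_m)^2$, using the chain rule, the symmetry of $\hat{\bar{C}}_i$, and $U_m=e_m$ at $V=0$, only the indices $m=j$ and $m=k$ survive and I obtain the exact expression
\[
[\nabla_0]_{j,k} = \frac{4}{K}\sum_{i=1}^K \left[ \big(e_j'\hat{\bar{C}}_i e_j\big)\big(e_j'\hat{\bar{C}}_i e_k\big) - \big(e_k'\hat{\bar{C}}_i e_k\big)\big(e_k'\hat{\bar{C}}_i e_j\big)\right].
\]

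Next I would expand the whitening matrix as $\hat{\bar{\mathrm{cov}}}_{Z,K}^{-1/2} = I_p + \hat{T} + \hat{E}$, where $\hat{T} = -\tfrac12(\hat{\bar{\mathrm{cov}}}_{Z,K}-I_p)$ is the first-order Taylor term of $M\mapsto (I_p+M)^{-1/2}$ and $\hat{E}$ is the quadratic-and-higher remainder. By \eqref{supp:eq:lem:bound:trois} we have $\rho_{sup}(\hat{\bar{\mathrm{cov}}}_{Z,K}-I_p)=O_p(1/\sqrt{K})$, hence $\rho_{sup}(\hat{T})=O_p(1/\sqrt{K})$ and $\rho_{sup}(\hat{E})=O_p(1/K)$. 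Writing $\hat{\bar{C}}_i = (I_p+\hat{T}+\hat{E})\hat{C}_i(I_p+\hat{T}+\hat{E})$, I would expand each bilinear form $e_a'\hat{\bar{C}}_i e_b$. The zeroth-order-in-$\hat{T}$ part reproduces the purely empirical $\hat{C}_i$-terms of $\bar{\nabla}_0$ (kept without any expectation, since they are $O_p(1)$), the first-order-in-$\hat{T}$ part produces the terms carrying a single factor $\hat{T}$, and all remaining contributions are $O_p(1/K)$: those of second order in $\hat{T}$ are bounded by $\rho_{sup}(\hat{T})^2\cdot (1/K)\sum_i\rho_{sup}(\hat{C}_i)^2 = O_p(1/K)\cdot O_p(1)$, and those containing $\hat{E}$ by $\rho_{sup}(\hat{E})\cdot (1/K)\sum_i\rho_{sup}(\hat{C}_i)^2 = O_p(1/K)$, using \eqref{supp:eq:lem:bound:deux} in both.

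The crucial step is the treatment of the first-order-in-$\hat{T}$ terms. After factoring $\hat{T}$ out of the sum (on the left or right, exploiting that $\hat{T}$ and $\hat{C}_i$ are symmetric), each such term takes the form $e_a'\hat{T}\big((1/K)\sum_i \hat{C}_i e_b\, e_c'\hat{C}_i e_d\big)$ or its transpose, the inner average being a rank-one product. I would split this average into its expectation plus a centered fluctuation. The fluctuation is a normalized sum of $L$-dependent summands with bounded variance (Condition \ref{supp:cond:block:independence} together with the moment bounds of Lemma \ref{supp:lem:basic:bounds}), hence $O_p(1/\sqrt{K})$ by the same variance computation used for \eqref{supp:eq:generic:sum:one}; multiplied by $\hat{T}=O_p(1/\sqrt{K})$ it contributes only $O_p(1/K)$, leaving exactly $e_a'\hat{T}\big((1/K)\sum_i\mathbb{E}[\hat{C}_i e_b\, e_c'\hat{C}_i e_d]\big)$. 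Finally, the identity $e_k'\hat{T}\hat{C}_i e_k = e_k'\hat{C}_i\hat{T} e_k$ (a scalar equals its transpose) merges the two coincident first-order pieces into the factor-of-$8$ terms, while the remaining pieces give the factor-of-$4$ terms with the correct index placement, matching $\bar{\nabla}_0$ term by term. The main obstacle is purely the bookkeeping: confirming that every discarded contribution is genuinely $O_p(1/K)$ and that the retained first-order terms reassemble with the correct coefficients and indices. The probabilistic content is light, reducing to the weak-dependence fluctuation bound already established in the proof of Lemma \ref{supp:lem:basic:bounds}.
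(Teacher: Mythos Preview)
Your proposal is correct and follows essentially the same approach as the paper: compute the exact gradient at $V=0$ via the first-order expansion of $\exp(S(V))$, then expand the whitening matrix to first order, discard higher-order pieces as $O_p(1/K)$ via Lemma~\ref{supp:lem:basic:bounds}, and replace the random averages accompanying the first-order correction by their expectations using the $L$-dependence fluctuation bound. The only cosmetic difference is that the paper first works with $T=\hat{\bar{\mathrm{cov}}}_{Z,K}^{-1/2}-I_p$ and substitutes $T=\hat{T}+O_p(1/K)$ at the very end, whereas you expand directly as $I_p+\hat{T}+\hat{E}$; the bookkeeping is identical.
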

	
	\begin{proof}
		
		In order the compute the gradient of the function $ V \to \hat{\Delta}_{K}(\exp( S( V ) ) )$ at zero we use $\exp(X) = I_p + X  + o(||X||)$ when $X \in \mathcal{M}_p \to 0$. We have, when $V \in \mathcal{U}_p \to 0$, recalling that $M_j'$ denotes the row $j$ of a square matrix $M$,
		\begin{align*}
			\hat{\Delta}_{K}(\exp( S( V ) ) )
			= &
			\frac{1}{K}
			\sum_{i=1}^K
			\sum_{j=1}^p
			\left(
			\exp( S( V ) )_j'
			\hat{\bar{\mathrm{cov}}}_{Z,K}^{-1/2}
			\hat{\mathrm{cov}}_{Z,i}
			\hat{\bar{\mathrm{cov}}}_{Z,K}^{-1/2}
			\exp( S( V ) )_j
			\right)^2
			\\
			= &
			\frac{1}{K}
			\sum_{i=1}^K
			\sum_{j=1}^p
			\left(
			( I_p + S(V) + a(V) )_j'
			\hat{\bar{C}}_i
			( I_p + S(V) + a(V) )_j
			\right)^2
			\\
			= &
			\frac{1}{K}
			\sum_{i=1}^K
			\sum_{j=1}^p
			\left(
			e_j'
			\hat{\bar{C}}_i
			e_j
			+
			2 
			e_j'
			\hat{\bar{C}}_i
			S(V)_j
			+
			b(V)
			\right)^2
			\\
			= &
			\frac{1}{K}
			\sum_{i=1}^K
			\sum_{j=1}^p
			\left(
			e_j'
			\hat{\bar{C}}_i
			e_j
			e_j'
			\hat{\bar{C}}_i
			e_j
			+
			4
			e_j'
			\hat{\bar{C}}_i
			e_j
			e_j'
			\hat{\bar{C}}_i
			S(V)_j
			+
			c(V)
			\right),
		\end{align*}
		where $||a(V)|| = o(||S(V)||)$, $||b(V)|| = o(||S(V)||)$ and $||c(V)|| = o(||S(V)||)$ as $V \to 0$. Hence we have
		\begin{align*}
			\hat{\Delta}_{K}(\exp( S( V ) ) )
			= &
			\frac{1}{K}
			\sum_{i=1}^K
			\sum_{j=1}^p
			e_j'
			\hat{\bar{C}}_i
			e_j
			e_j'
			\hat{\bar{C}}_i
			e_j
			+
			\frac{1}{K}
			\sum_{i=1}^K
			\sum_{j=1}^p
			4
			e_j'
			\hat{\bar{C}}_i
			e_j
			\left(
			\sum_{k=1}^p
			\left[ \hat{\bar{C}}_i \right]_{j,k}
			S(V)_{j,k}
			\right)
			+
			o(||V||)
			\\
			= &
			\frac{1}{K}
			\sum_{i=1}^K
			\sum_{j=1}^p
			e_j'
			\hat{\bar{C}}_i
			e_j
			e_j'
			\hat{\bar{C}}_i
			e_j
			+
			\frac{1}{K}
			\sum_{i=1}^K
			\sum_{j=1}^{p-1}
			\sum_{k=j+1}^{p}
			4
			e_j'
			\hat{\bar{C}}_i
			e_j
			\left[ \hat{\bar{C}}_i \right]_{j,k}
			V_{j,k}
			\\
			& +
			\frac{1}{K}
			\sum_{i=1}^K
			\sum_{k=1}^{p-1}
			\sum_{j=k+1}^{p}
			4
			e_j'
			\hat{\bar{C}}_i
			e_j
			\left[ \hat{\bar{C}}_i \right]_{j,k}
			(-V_{k,j})
			+
			o(||V||)
			\\
			= &
			\frac{1}{K}
			\sum_{i=1}^K
			\sum_{j=1}^p
			e_j'
			\hat{\bar{C}}_i
			e_j 
			e_j'
			\hat{\bar{C}}_i
			e_j
			\\
			& +
			\frac{1}{K}
			\sum_{i=1}^K
			\sum_{j=1}^{p-1}
			\sum_{k=j+1}^{p}
			V_{j,k}
			4
			\left(
			-
			e_k'
			\hat{\bar{C}}_i
			e_k
			e_k'
			\hat{\bar{C}}_i
			e_j
			+
			e_j'
			\hat{\bar{C}}_i
			e_j
			e_j'
			\hat{\bar{C}}_i
			e_k
			\right)
			+
			o(||V||).
		\end{align*}
		Hence, it follows that for $1 \leq  j < k \leq p$,
		\[
		[\nabla_0]_{j,k} =
		4
		\frac{1}{K}
		\sum_{i=1}^K
		\left(
		-
		e_k'
		\hat{\bar{C}}_i
		e_k
		e_k'
		\hat{\bar{C}}_i
		e_j
		+
		e_j'
		\hat{\bar{C}}_i
		e_j
		e_j'
		\hat{\bar{C}}_i
		e_k
		\right).
		\]
		
		Let 
		\[
		T = \hat{\bar{\mathrm{cov}}}_{Z,K}^{-1/2} - I_p.
		\]
		As shown in the proof of Lemma \ref{supp:lem:removing:hat}, and from \eqref{supp:eq:lem:bound:trois}, we have $T = O_p( 1/K^{1/2} )$. We have
		\begin{flalign*}
			& \frac{1}{K}
			\sum_{i=1}^K
			e_k'
			\hat{\bar{C}}_i
			e_k
			e_k'
			\hat{\bar{C}}_i
			e_j
			= &
			\\
			& \frac{1}{K}
			\sum_{i=1}^K
			e_k'
			( I_p + T )
			\hat{C}_i
			( I_p+ T )
			e_k
			e_k'
			( I_p + T )
			\hat{C}_i
			( I_p + T )
			e_j. &
		\end{flalign*}
		In the above display, after expanding the terms $( I_p + T )$, each of the obtained sums containing two times $T$ or more is a $O_p(1/K)$, as can be shown from Lemma \ref{supp:lem:basic:bounds}.  Hence we obtain
		\begin{flalign*}
			& \frac{1}{K}
			\sum_{i=1}^K
			e_k'
			\hat{\bar{C}}_i
			e_k
			e_k'
			\hat{\bar{C}}_i
			e_j
			= &
			\\
			& \frac{1}{K}
			\sum_{i=1}^K
			e_k'
			\hat{C}_i
			e_k
			e_k'
			\hat{C}_i
			e_j
			+
			2
			\frac{1}{K}
			\sum_{i=1}^K
			e_k'
			T
			\hat{C}_i
			e_k
			e_k'
			\hat{C}_i
			e_j
			&
			\\
			& +
			\frac{1}{K}
			\sum_{i=1}^K
			e_k'
			\hat{C}_i
			e_k
			e_k'
			T
			\hat{C}_i
			e_j
			+
			\frac{1}{K}
			\sum_{i=1}^K
			e_k'
			\hat{C}_i
			e_k
			e_k'
			\hat{C}_i
			T
			e_j + O_p(1/K).
		\end{flalign*}
		We finally obtain
		\begin{flalign*}
			&\frac{1}{K}
			\sum_{i=1}^K
			e_k'
			\hat{\bar{C}}_i
			e_k
			e_k'
			\hat{\bar{C}}_i
			e_j
			= &
			\\
			&
			\frac{1}{K}
			\sum_{i=1}^K
			e_k'
			\hat{C}_i
			e_k
			e_k'
			\hat{C}_i
			e_j
			+ 2
			e_k'
			T
			\left(
			\frac{1}{K}
			\sum_{i=1}^K
			\hat{C}_i
			e_k
			e_k'
			\hat{C}_i
			e_j
			\right)
			+
			e_k'
			T
			\left(
			\frac{1}{K}
			\sum_{i=1}^K
			\left(
			e_k'
			\hat{C}_i
			e_k
			\right)
			\hat{C}_i
			e_j
			\right)
			&
			\\
			&
			+
			\left(
			\frac{1}{K}
			\sum_{i=1}^K
			e_k'
			\hat{C}_i
			e_k
			e_k'
			\hat{C}_i
			\right)
			T
			e_j
			+
			O_p(1/K).
			& \\
		\end{flalign*}
		In the three last sums under parenthesis of the above displays, any two of the summands are independent when the corresponding difference of indices is larger or equal to $L$. Furthermore, the norms of these summands have bounded moments from Lemma \ref{supp:lem:basic:bounds}. Also, recall that $T = O_p(K^{-1/2})$. Hence, we obtain
		\begin{flalign*}
			&\frac{1}{K}
			\sum_{i=1}^K
			e_k'
			\hat{\bar{C}}_i
			e_k
			e_k'
			\hat{\bar{C}}_i
			e_j
			= &
			\\
			&
			\frac{1}{K}
			\sum_{i=1}^K
			e_k'
			\hat{C}_i
			e_k
			e_k'
			\hat{C}_i
			e_j
			+ 2
			e_k'
			T
			\left(
			\frac{1}{K}
			\sum_{i=1}^K
			\mathbb{E}
			\left[
			\hat{C}_i
			e_k
			e_k'
			\hat{C}_i
			e_j
			\right]
			\right)
			+
			e_k'
			T
			\left(
			\frac{1}{K}
			\sum_{i=1}^K
			\mathbb{E}
			\left[
			\left(
			e_k'
			\hat{C}_i
			e_k
			\right)
			\hat{C}_i
			e_j
			\right]
			\right)
			&
			\\
			&
			+
			\left(
			\frac{1}{K}
			\sum_{i=1}^K
			\mathbb{E}
			\left[
			e_k'
			\hat{C}_i
			e_k
			e_k'
			\hat{C}_i
			\right]
			\right)
			T
			e_j
			+
			O_p(1/K).
			& \\
		\end{flalign*}
		Hence, we finally have
		\begin{align*}
			[\nabla_0]_{j,k} 
			= &
			- 4
			\frac{1}{K}
			\sum_{i=1}^K
			e_k'
			\hat{C}_i
			e_k
			e_k'
			\hat{C}_i
			e_j
			- 8
			e_k'
			T
			\left(
			\frac{1}{K}
			\sum_{i=1}^K
			\mathbb{E}
			\left[
			\hat{C}_i
			e_k
			e_k'
			\hat{C}_i
			e_j
			\right]
			\right)
			\\
			& -
			4
			e_k'
			T
			\left(
			\frac{1}{K}
			\sum_{i=1}^K
			\mathbb{E}
			\left[
			\left(
			e_k'
			\hat{C}_i
			e_k
			\right)
			\hat{C}_i
			e_j
			\right]
			\right)
			-
			4
			\left(
			\frac{1}{K}
			\sum_{i=1}^K
			\mathbb{E}
			\left[
			e_k'
			\hat{C}_i
			e_k
			e_k'
			\hat{C}_i
			\right]
			\right)
			T
			e_j
			\\
			& +
			4
			\frac{1}{K}
			\sum_{i=1}^K
			e_j'
			\hat{C}_i
			e_j
			e_j'
			\hat{C}_i
			e_k
			+ 8
			e_j'
			T
			\left(
			\frac{1}{K}
			\sum_{i=1}^K
			\mathbb{E}
			\left[
			\hat{C}_i
			e_j
			e_j'
			\hat{C}_i
			e_k
			\right]
			\right)
			\\
			& +
			4
			e_j'
			T
			\left(
			\frac{1}{K}
			\sum_{i=1}^K
			\mathbb{E}
			\left[
			\left(
			e_j'
			\hat{C}_i
			e_j
			\right)
			\hat{C}_i
			e_k
			\right]
			\right)
			+ 4
			\left(
			\frac{1}{K}
			\sum_{i=1}^K
			\mathbb{E}
			\left[
			e_j'
			\hat{C}_i
			e_j
			e_j'
			\hat{C}_i
			\right]
			\right)
			T
			e_k
			+ O_p(1/K). 
		\end{align*}
		From the expression of the derivative of the inverse matrix square root around the identity, we obtain
		\[
		T = \hat{T} + O_p( 1/K).
		\]
		This concludes the proof.
	\end{proof}
	
	Let $d_w$ denote a metric generating the topology of weak convergence on the set of Borel probability measures on Euclidean spaces; for specific examples see, e.g., the discussion in \cite{dudleyreal} p. 393.
	
	\begin{lem} \label{supp:lem:TCL:gradient}
		Assume that conditions  \ref{supp:cond:block:independence}, \ref{supp:cond:max:variance} and \ref{supp:cond:asymptotically:distinct:eigenvalues} hold.
		Let $\Sigma_{\nabla}$ be the covariance matrix of $ K^{1/2} \bar{\nabla}_0$. Let $Q_K$ be the distribution of $K^{1/2} \bar{\nabla}_0$. Then, as $K \to \infty$ we have $d_w( Q_K , \mathcal{N}( 0 , \Sigma_{\nabla} ) ) \to 0$ as $K \to \infty$. Furthermore, the matrix $\Sigma_{\nabla}$ is bounded as $K \to \infty$.
		
	\end{lem}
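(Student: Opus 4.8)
The plan is to recognize $K^{1/2}\bar{\nabla}_0$ as a normalized sum of a finitely dependent triangular array and then invoke a central limit theorem for weakly dependent arrays. First I would rewrite each of the eight terms in the expression for $[\bar{\nabla}_0]_{j,k}$ from Lemma~\ref{supp:lem:equi:gradient} as a single block-wise average $\tfrac{1}{K}\sum_{i=1}^K(\,\cdot\,)$. The two terms not involving $\hat{T}$, namely $-4\tfrac{1}{K}\sum_i e_k'\hat{C}_i e_k\,e_k'\hat{C}_i e_j$ and $+4\tfrac{1}{K}\sum_i e_j'\hat{C}_i e_j\,e_j'\hat{C}_i e_k$, are already of this form. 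For the remaining terms I would use that, since $\bar{\mathrm{cov}}_{Z,K}=I_p$,
\[
\hat{T}=-\tfrac12\bigl(\hat{\bar{\mathrm{cov}}}_{Z,K}-I_p\bigr)=-\tfrac{1}{2K}\sum_{i=1}^K\bigl(\hat{C}_i-C_i\bigr),
\]
so that each term of the shape $e_k'\hat{T}\,v$ with $v$ a deterministic (bounded) vector becomes the linear block-wise average $-\tfrac{1}{2K}\sum_i e_k'(\hat{C}_i-C_i)v$. Collecting the eight contributions coordinate-by-coordinate yields
\[
K^{1/2}\bar{\nabla}_0=\frac{1}{\sqrt{K}}\sum_{i=1}^K \mathbf{G}_i,
\]
where $\mathbf{G}_i\in\mathbb{R}^{p(p-1)/2}$ depends only on the Gaussian variables of block $i$ together with bounded, $K$-dependent deterministic coefficients.

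Next I would verify the three properties that make this array amenable to a CLT. \emph{Centering:} each $\mathbf{G}_i$ has mean zero --- the $\hat{T}$-terms because $\mathbb{E}[\hat{C}_i-C_i]=0$, and the two remaining terms because, for $j\neq k$, Isserlis' theorem together with the independence and zero mean of the Gaussian sources gives $\mathbb{E}[\,[\hat{C}_i]_{kk}[\hat{C}_i]_{kj}\,]=0$. \emph{Finite dependence:} $\mathbf{G}_i$ and $\mathbf{G}_{i'}$ are independent whenever $|i-i'|\geq L$, by Condition~\ref{supp:cond:block:independence}. \emph{Moment bounds:} $\mathbb{E}\|\mathbf{G}_i\|^r\leq C_{sup}$ uniformly in $i$ and $K$ for every fixed $r$, since each entry is a bounded-coefficient quadratic form in $\hat{C}_i$ and $\mathbb{E}(\rho_{sup}(\hat{C}_i)^r)\le C_{sup}$ by \eqref{supp:eq:lem:bound:deux}. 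From these, the boundedness of $\Sigma_{\nabla}$ is immediate: $\Sigma_{\nabla}=\tfrac1K\sum_{i,i'=1}^K\mathrm{Cov}(\mathbf{G}_i,\mathbf{G}_{i'})$, and by finite dependence only the $O(K)$ pairs with $|i-i'|<L$ contribute, each bounded via Cauchy--Schwarz and the second-moment bound, so $\Sigma_{\nabla}=O(1)$.

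For the weak-convergence statement itself, the essential difficulty is that $\Sigma_{\nabla}$ need not converge as $K\to\infty$, which is precisely why the claim is phrased through the metric $d_w$ rather than as ordinary convergence in distribution. I would handle this with a subsequence argument in the spirit of \cite{bachoc2019spatial}: suppose $d_w(Q_K,\mathcal{N}(0,\Sigma_{\nabla}))\not\to0$, extract a subsequence along which it stays $\geq\eta>0$, and then, using the just-established boundedness of $\Sigma_{\nabla}$ (a bounded sequence in a finite-dimensional space), pass to a further subsequence along which $\Sigma_{\nabla}\to\Sigma_{\infty}$ for a fixed limit $\Sigma_{\infty}$. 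Along this sub-subsequence the covariance is asymptotically fixed, so a standard triangular-array CLT applies. Concretely, by the Cram\'er--Wold device it suffices to treat, for each fixed $\lambda$, the scalar array $\lambda'\mathbf{G}_i$, for which $\tfrac1{\sqrt K}\sum_i\lambda'\mathbf{G}_i\to\mathcal{N}(0,\lambda'\Sigma_{\infty}\lambda)$ follows from the central limit theorem for weakly dependent triangular arrays of \cite{Neumann13} (alternatively a big-block/small-block Lindeberg argument exploiting $L$-dependence), the variance and weak-dependence hypotheses being furnished by the moment bounds above and by $\lambda'\Sigma_{\nabla}\lambda\to\lambda'\Sigma_{\infty}\lambda$. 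Thus $d_w(Q_K,\mathcal{N}(0,\Sigma_{\infty}))\to0$ along the sub-subsequence; since also $\mathcal{N}(0,\Sigma_{\nabla})\to\mathcal{N}(0,\Sigma_{\infty})$ weakly (as $\Sigma_{\nabla}\to\Sigma_{\infty}$), the triangle inequality for $d_w$ forces $d_w(Q_K,\mathcal{N}(0,\Sigma_{\nabla}))\to0$ there, contradicting $\geq\eta$. The main obstacle is thus not the CLT mechanics --- these are routine given the finite dependence and the all-order moment bounds from Gaussianity --- but the bookkeeping that casts $K^{1/2}\bar{\nabla}_0$ exactly as a centered $L$-dependent sum (in particular verifying the vanishing of the means) and the subsequence device needed to accommodate a non-convergent limiting covariance within the $d_w$ framework.
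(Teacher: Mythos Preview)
Your proposal is correct and follows essentially the same route as the paper: decompose $\hat{T}$ as a block-wise average to write $K^{1/2}\bar{\nabla}_0$ as a normalized sum of $L$-dependent summands with uniformly bounded moments, verify the vanishing of the means of the non-$\hat{T}$ terms via Isserlis' theorem, deduce boundedness of $\Sigma_\nabla$, and then pass to a subsequence along which $\Sigma_\nabla$ converges before invoking Neumann's CLT. Your exposition of the subsequence/contradiction argument for the $d_w$ statement is in fact more explicit than the paper's, which simply remarks that one may ``first assume that the sequence of matrices $\Sigma_\nabla$ converges as $K\to\infty$, up to taking a subsequence'' and then applies the CLT.
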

	
	\begin{proof}
		
		We can write, for $1 \leq j < k \leq p$,
		\begin{equation*} 
			[ \bar{\nabla}_0]_{j,k}
			=
			\frac{1}{K}
			\sum_{i=1}^K V_{i}^{(j,k)}
			+
			e_k' \hat{T}
			\frac{1}{K}
			\sum_{i=1}^K W_{i}^{(j,k)}
			+
			e_j' \hat{T}
			\frac{1}{K}
			\sum_{i=1}^K X_{i}^{(j,k)},
		\end{equation*}
		where $V_{i}^{(j,k)} \in \mathbb{R} $ is random and $W_{i}^{(j,k)}$ and $X_{i}^{(j,k)}$ are fixed $p \times 1$ vectors.
		Furthermore, since, with the notation of Lemma \ref{supp:lem:equi:gradient},
		\[
		\hat{\bar{\mathrm{cov}}}_{Z,K}
		-
		I_p
		=
		\frac{1}{K}
		\sum_{i=1}^K
		\left(
		\hat{C}_i
		-
		\mathbb{E}[ \hat{C}_i ]
		\right),
		\]
		we have
		\begin{equation} \label{supp:eq:for:CLT:dependent}
			[ \bar{\nabla}_0]_{j,k}
			=
			\frac{1}{K}
			\sum_{i=1}^K 
			\left(
			V_{i}^{(j,k)}
			- \frac{1}{2} 
			e_k'
			\left(
			\hat{C}_i
			-
			\mathbb{E}[ \hat{C}_i ]
			\right)
			\left[
			\frac{1}{K}
			\sum_{a=1}^K W_{a}^{(j,k)}
			\right]
			- \frac{1}{2} 
			e_j'
			\left(
			\hat{C}_i
			-
			\mathbb{E}[ \hat{C}_i ]
			\right)
			\left[
			\frac{1}{K}
			\sum_{a=1}^K X_{a}^{(j,k)}
			\right]
			\right).
		\end{equation}
		The vectors $\{ ( V_{i}^{(j,k)} )_{1 \leq j < k \leq p} \}_{i \in \mathbb{N}}$ 
		are such that $((V_{i}^{(j,k)})_{j,k} , \hat{C}_i )$ and $((V_{i'}^{(j,k)})_{j,k} , \hat{C}_{i'})$ are independent for $|i - i'| \geq L$. Furthermore, from Lemma \ref{supp:lem:basic:bounds},
		\[
		\sup_{i \in \mathbb{N}, 1 \leq j < k \leq p}
		\mathbb{E} ( | V_{i}^{(j,k)}|^r ) \leq C_{sup}
		~ ~ \mbox{for any fixed $r >0$ },
		\]
		\[
		\sup_{i \in \mathbb{N}, 1 \leq j < k \leq p}
		||   W_{i}^{(j,k)} ||  \leq C_{sup}
		\] 
		and
		\[
		\sup_{i \in \mathbb{N}, 1 \leq j < k \leq p}
		||   X_{i}^{(j,k)} ||  \leq C_{sup}.
		\]
		Thus, the quantity in \eqref{supp:eq:for:CLT:dependent} is a component of an average of random vectors, with bounded moments, and such that two of these vectors are independent if their index difference is larger or equal to $L$.
		Hence the matrix $\Sigma_{\nabla}$ is bounded. Thus, one can first assume that the sequence of matrices $\Sigma_{\nabla}$ converges as $K \to \infty$, up to taking a subsequence. Then, one can apply a central limit theorem for weakly dependent variables (for instance Theorem 2.1 in \cite{Neumann13}) to \eqref{supp:eq:for:CLT:dependent}.
		
		This proves that, with $Q'_K$ be the distribution of 
		\[
		\sqrt{K}
		\left(
		\left[
		\frac{1}{K}
		\sum_{i=1}^K \left( V_{i}^{(j,k)} - \mathbb{E}[ V_{i}^{(j,k)} ] \right) 
		+
		e_k' \hat{T}
		\frac{1}{K}
		\sum_{i=1}^K W_{i}^{(j,k)}
		+
		e_j' \hat{T}
		\frac{1}{K}
		\sum_{i=1}^K X_{i}^{(j,k)}
		\right]_{1 \leq j < k \leq p}
		\right),
		\]
		we have $d_w( Q'_K , \mathcal{N}( 0 , \Sigma_{\nabla} ) ) \to 0$ as $K \to \infty$. Hence, we can conclude the proof by showing that $\mathbb{E}[ V_{i}^{(j,k)} ]= 0$ for $1 \leq j < k \leq p$. 
		We have, for $1 \leq j < k \leq p$,
		\begin{align*}
			\mathbb{E}[
			e_k'
			\hat{C}_i
			e_k
			e_k'
			\hat{C}_i
			e_j
			]
			= &
			\sum_{a,b,c,d=1}^p
			(e_k)_a
			(e_k)_b
			(e_k)_c
			(e_j)_d
			\mathbb{E}[
			(\hat{C}_i)_{a,b}
			(\hat{C}_i)_{c,d}
			].
		\end{align*}
		Since $(e_k)_l$ is zero for $l \neq k$, we obtain from \eqref{supp:eq:moment:product:hat:cov}
		\begin{align*}
			\mathbb{E}[
			e_k'
			\hat{C}_i
			e_k
			e_k'
			\hat{C}_i
			e_j
			]
			= &
			\mathbb{E}[
			(\hat{C}_i)_{k,k}
			(\hat{C}_i)_{k,j}
			]
			\\
			= &
			\left( \mathrm{cov}_{Z,i} \right)_{k,k}
			\left( \mathrm{cov}_{Z,i} \right)_{k,j}
			\\
			& +
			\mathbf{1}_{k=k}
			\mathbf{1}_{k=j}
			\frac{1}{s^2}
			\sum_{a,b = 1}^s
			(D^{(a,b)}_{Z,i})_{k,k}
			(D^{(a,b)}_{Z,i})_{k,k}
			\notag
			\\
			& +
			\mathbf{1}_{k=j}
			\mathbf{1}_{k=k}
			\frac{1}{s^2}
			\sum_{a,b = 1}^s
			(D^{(a,b)}_{Z,i})_{k,k}
			\mathbb{E}
			(D^{(a,b)}_{Z,i})_{k,k}
			\\
			= &
			0, \notag
		\end{align*}
		since $1 \leq j < k \leq p$. Similarly, we show
		\[ 
		\mathbb{E}[
		e_j'
		\hat{C}_i
		e_j
		e_j'
		\hat{C}_i
		e_k
		] = 0
		\]
		for $1 \leq j < k \leq p$. Hence the proof is concluded. 
		
	\end{proof}

	\begin{lem} \label{supp:lem:Hessian:equi}
		Assume that conditions  \ref{supp:cond:block:independence}, \ref{supp:cond:max:variance} and \ref{supp:cond:asymptotically:distinct:eigenvalues} hold.
		Let us write, for $1 \leq e < f \leq p$ and $1 \leq g < h \leq p$, 
		\[
		E_{e,f,g,h} =
		\frac{ \partial^2 }{ \partial V_{e,f} \partial V_{g,h}
		}
		\left. \hat{\Delta}_K( \exp(S(V)) ) \right|_{V=0},
		\]
		that is, $E_{e,f,g,h}$ is the element $(e,f) \times (g,h)$ of the Hessian matrix of the function $ V \to \hat{\Delta}_{K}(\exp( S( V ) ) )$ at $V=0$. 
		Then, for $1 \leq e < f \leq p$ and $1 \leq g < h \leq p$, we have
		\begin{align*}
			E_{e,f,g,h}
			= &
			o_p(1) -  4
			\mathbf{1}_{(e,f) = (g,h)}
			\frac{1}{K}
			\sum_{i=1}^K
			\left(
			[C_i]_{e,e}
			-
			[C_i]_{f,f}
			\right)^2
			\\
			& 
			- 
			8
			\mathbf{1}_{(e,f) = (g,h)}
			\frac{1}{K}
			\sum_{i=1}^K
			\frac{1}{s^2} 
			\sum_{m,n=1}^s
			\left(
			\left[
			D_{Z,i}^{(m,n)}
			\right]_{e,e}
			-
			\left[
			D_{Z,i}^{(m,n)}
			\right]_{f,f}
			\right)^2.
		\end{align*}
		
	\end{lem}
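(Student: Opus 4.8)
The plan is to carry out a second-order Taylor expansion of the map $V \mapsto \hat{\Delta}_K(\exp(S(V)))$ at $V=0$, read off the quadratic form, and then reduce the resulting random averages to their population limits by the same three-step mechanism already used for the gradient in Lemma~\ref{supp:lem:equi:gradient}: remove the whitening matrix, pass to expectations by a law of large numbers, and evaluate the expectations by Isserlis' theorem.

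First I would expand the conjugation $\exp(S(V))\hat{\bar{\mathrm{cov}}}_{Z,K}^{-1/2}\hat{\mathrm{cov}}_{Z,i}\hat{\bar{\mathrm{cov}}}_{Z,K}^{-1/2}\exp(S(V))' = \exp(S(V))\,\hat{\bar{C}}_i\,\exp(-S(V))$, using $\exp(X) = I_p + X + \tfrac12 X^2 + o(\|X\|^2)$, to obtain
\[
\exp(S(V))\,\hat{\bar{C}}_i\,\exp(-S(V))
=
\hat{\bar{C}}_i + [S(V),\hat{\bar{C}}_i] + \tfrac12\bigl[S(V),[S(V),\hat{\bar{C}}_i]\bigr] + o(\|V\|^2),
\]
where $[M,N]=MN-NM$. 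Since $\hat{\Delta}_K(\exp(S(V))) = (1/K)\sum_{i=1}^K\sum_{j=1}^p[\exp(S(V))\hat{\bar{C}}_i\exp(-S(V))]_{j,j}^2$, collecting the terms quadratic in $V$ splits the Hessian $E_{e,f,g,h}$ into two pieces: the second derivative of $(1/K)\sum_i\sum_j([S(V),\hat{\bar{C}}_i]_{j,j})^2$ and that of $(1/K)\sum_i\sum_j[\hat{\bar{C}}_i]_{j,j}[S(V),[S(V),\hat{\bar{C}}_i]]_{j,j}$. A direct differentiation, using $[S(V),\hat{\bar{C}}_i]_{j,j}=2\sum_k[\hat{\bar{C}}_i]_{j,k}S(V)_{j,k}$, shows the first piece equals $(8/K)\sum_i[\hat{\bar{C}}_i]_{e,f}[\hat{\bar{C}}_i]_{g,h}(\mathbf{1}_{e=g}-\mathbf{1}_{e=h}-\mathbf{1}_{f=g}+\mathbf{1}_{f=h})$, while the second piece contributes $-(4/K)\sum_i([\hat{\bar{C}}_i]_{e,e}-[\hat{\bar{C}}_i]_{f,f})^2$ on the diagonal $(e,f)=(g,h)$, together with further terms that are linear in the off-diagonal entries of $\hat{\bar{C}}_i$.

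Next I would remove the hats and pass to expectations. Writing $\hat{\bar{C}}_i = (I_p+T)\hat{C}_i(I_p+T)$ with $T=\hat{\bar{\mathrm{cov}}}_{Z,K}^{-1/2}-I_p=O_p(K^{-1/2})$ by Lemma~\ref{supp:lem:basic:bounds}, exactly as in Lemmas~\ref{supp:lem:removing:hat} and \ref{supp:lem:equi:gradient}, every averaged term carrying a factor $T$ is $o_p(1)$, so $\hat{\bar{C}}_i$ may be replaced by $\hat{C}_i$ throughout. Each remaining average $(1/K)\sum_i f(\hat{C}_i)$, with $f$ a fixed quadratic polynomial in the entries of $\hat{C}_i$, is an average of an $L$-dependent array with bounded fourth moments (Lemma~\ref{supp:lem:basic:bounds}), so by the argument of Lemma~\ref{supp:lem:law:large:number} it equals $(1/K)\sum_i\mathbb{E}[f(\hat{C}_i)]$ up to $o_p(1)$.

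Finally I would evaluate these expectations by Isserlis' theorem and the independence of the sources, reusing \eqref{supp:eq:moment:product:hat:cov}. For the off-diagonal entries $(e,f)\neq(g,h)$, every surviving expectation, such as $\mathbb{E}([\hat{C}_i]_{e,f}[\hat{C}_i]_{g,h})$ or $\mathbb{E}([\hat{C}_i]_{j,j}[\hat{C}_i]_{k,l})$ with $k\neq l$, contains an odd power of an independent zero-mean Gaussian coordinate and hence vanishes; this also kills the off-diagonal-entry terms of the second piece, leaving $o_p(1)$. On the diagonal, the first piece gives $(16/s^2)\sum_{m,n=1}^s[D_{Z,i}^{(m,n)}]_{e,e}[D_{Z,i}^{(m,n)}]_{f,f}$, while the second, using $\mathbb{E}([\hat{C}_i]_{e,e}^2)=[C_i]_{e,e}^2+(2/s^2)\sum_{m,n}[D_{Z,i}^{(m,n)}]_{e,e}^2$ and $\mathbb{E}([\hat{C}_i]_{e,e}[\hat{C}_i]_{f,f})=[C_i]_{e,e}[C_i]_{f,f}$, gives $-4([C_i]_{e,e}-[C_i]_{f,f})^2-(8/s^2)\sum_{m,n}([D_{Z,i}^{(m,n)}]_{e,e}^2+[D_{Z,i}^{(m,n)}]_{f,f}^2)$. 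Adding the two and recognizing the perfect square then yields the stated formula. The main obstacle is exactly this recombination: the genuinely nonzero cross term $(16/s^2)\sum_{m,n}[D_{Z,i}^{(m,n)}]_{e,e}[D_{Z,i}^{(m,n)}]_{f,f}$ produced by the squared first-order part must be matched against the variances of $[\hat{C}_i]_{e,e}$ and $[\hat{C}_i]_{f,f}$ produced by the second-order part, so that the separately appearing $C$- and $D$-dependent contributions collapse into the two clean squared differences; carrying the many index contractions and combinatorial factors correctly through the two nested commutators is where the real effort lies.
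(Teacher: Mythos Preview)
Your proposal is correct and follows essentially the same route as the paper: a second-order Taylor expansion of $V\mapsto\hat{\Delta}_K(\exp(S(V)))$ at $V=0$, followed by the same three-step reduction (replace $\hat{\bar{C}}_i$ by $\hat{C}_i$ via $T=O_p(K^{-1/2})$, pass to expectations by $L$-dependence, evaluate via \eqref{supp:eq:moment:product:hat:cov}). The only difference is organizational: the paper expands $(e_j+\ldots)'\hat{\bar{C}}_i(e_j+\ldots)$ directly and groups the resulting second-order terms into three sums $S_1,S_2,S_3$ indexed by which factor of $\hat{\bar{C}}_i$ they come from, whereas you use the adjoint expansion $\exp(S(V))\hat{\bar{C}}_i\exp(-S(V))=\hat{\bar{C}}_i+[S(V),\hat{\bar{C}}_i]+\tfrac12[S(V),[S(V),\hat{\bar{C}}_i]]+o(\|V\|^2)$ and split into the ``first-order squared'' and ``zeroth times second-order'' pieces; these decompositions regroup the same monomials, and your final recombination into the two squared differences is exactly the paper's combination of $S_1+S_2+S_3$.
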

	
	\begin{proof}
		
		In order to calculate the Hessian matrix at $V=0$, we use a Taylor expansion as $V \in \mathcal{U}_p \to 0$. We have, with $||a(V)|| = O( ||V||^3 )$ and $||b(V)|| = O( ||V||^3 )$ as $V \to 0$,
		
		\begin{align*}
			\hat{\Delta}_K( \exp(S(V)) )
			= &
			\frac{1}{K}
			\sum_{i=1}^K
			\sum_{j=1}^p
			\left(
			\left[
			I_p + S(V) + \frac{1}{2} S(V)^2 + a(V)  
			\right]_j'
			\hat{\bar{C}}_i
			\left[ I_p + S(V) + \frac{1}{2} S(V)^2 + a(V)  
			\right]_j
			\right)^2
			\\
			= &
			\frac{1}{K}
			\sum_{i=1}^K
			\sum_{j=1}^p
			\left(
			e_j'
			\hat{\bar{C}}_i
			e_j
			+ 2 S(V)_j' \hat{\bar{C}}_i e_j
			+ 2 \frac{1}{2} [S(V)^2]'_j \hat{\bar{C}}_i
			e_j
			+ 
			[S(V)]'_j \hat{\bar{C}}_i [S(V)]_j
			+ b(V)
			\right)^2.
		\end{align*}
		In the following, we gather the second order terms:
		\begin{flalign*}
			&
			\frac{1}{K}
			\sum_{i=1}^K
			\sum_{j=1}^p
			\left(
			2 e_j' \hat{\bar{C}}_i e_j
			[S(V)^2]'_j \hat{\bar{C}}_i e_j
			+ 2 e_j' \hat{\bar{C}}_i e_j
			[S(V)]'_j \hat{\bar{C}}_i [S(V)]_j
			+ 4 
			[S(V)]'_j \hat{\bar{C}}_i e_j
			e_j' \hat{\bar{C}}_i  [S(V)]_j 
			\right)
			&
			\\
			&
			=
			\frac{1}{K}
			\sum_{i=1}^K
			\sum_{j=1}^p
			\left(
			2 e_j' \hat{\bar{C}}_i e_j
			\left(
			\sum_{k,a = 1}^p
			S(V)_{j,k} S(V)_{k,a} [ \hat{\bar{C}}_i]_{a,j}
			\right)
			+ 2 e_j' \hat{\bar{C}}_i e_j
			\left(
			\sum_{k,a = 1}^p
			S(V)_{j,k} S(V)_{j,a} [ \hat{\bar{C}}_i]_{k,a}
			\right)
			\right.
			&
			\\
			&
			\left.
			~ ~
			+
			4
			\left(
			\sum_{a,b = 1}^p
			[ \hat{\bar{C}}_i]_{j,a}
			S(V)_{j,a} 
			S(V)_{j,b} 
			[ \hat{\bar{C}}_i]_{b,j}
			\right)
			\right)
			&
			\\
			&
			=
			\frac{1}{K}
			\sum_{i=1}^K
			\sum_{j=1}^p
			\sum_{k,a = 1}^p
			\left(
			- 2 [\hat{\bar{C}}_i]_{j,j}
			[\hat{\bar{C}}_i]_{a,j}
			S(V)_{a,k} S(V)_{j,k}
			+ 2 [\hat{\bar{C}}_i]_{k,k}
			[\hat{\bar{C}}_i]_{a,j}
			S(V)_{a,k} S(V)_{j,k}
			\right.
			&
			\\
			&
			\left.
			~ ~ + 4
			[\hat{\bar{C}}_i]_{k,a}
			[\hat{\bar{C}}_i]_{j,k}
			S(V)_{a,k} S(V)_{j,k}
			\right)
			&
			\\
			&
			=
			\frac{1}{K}
			\sum_{i=1}^K
			\sum_{j=1}^p
			\sum_{k,a = 1}^p
			\left( 
			- 2 [\hat{\bar{C}}_i]_{j,j}
			[\hat{\bar{C}}_i]_{a,j}
			+ 2 [\hat{\bar{C}}_i]_{k,k}
			[\hat{\bar{C}}_i]_{a,j}
			+ 4
			[\hat{\bar{C}}_i]_{k,a}
			[\hat{\bar{C}}_i]_{j,k}
			\right)
			S(V)_{a,k} S(V)_{j,k}.
			&
		\end{flalign*}
		From the above display, it follows that, for $1 \leq e < f \leq p$ and $1 \leq g < h \leq p$, 
		\begin{flalign*}
			&
			E_{e,f,g,h}
			= 
			&
			\\
			&
			\frac{1}{K}
			\sum_{i=1}^K
			\sum_{j=1}^p
			\sum_{k,a = 1}^p
			\left( 
			- 4  [\hat{\bar{C}}_i]_{j,j}
			[\hat{\bar{C}}_i]_{a,j}
			+ 4  [\hat{\bar{C}}_i]_{k,k}
			[\hat{\bar{C}}_i]_{a,j}
			+ 8
			[\hat{\bar{C}}_i]_{k,a}
			[\hat{\bar{C}}_i]_{j,k}
			\right)
			&
			\\
			&
			\left(
			\mathbf{1}_{(a,k) = (e,f)}
			-
			\mathbf{1}_{(a,k) = (f,e)}
			\right)
			\left(
			\mathbf{1}_{(j,k) = (g,h)}
			-
			\mathbf{1}_{(j,k) = (h,g)}
			\right).
			&
		\end{flalign*}

		As in the proof of Lemma \ref{supp:lem:equi:gradient}, we can show that we have
		\begin{flalign*}
			&
			E_{e,f,g,h}
			= 
			o_p(1) + 
			&
			\\
			&
			\frac{1}{K}
			\sum_{i=1}^K
			\sum_{j=1}^p
			\sum_{k,a = 1}^p
			\left( 
			- 
			4 
			[\hat{C}_i]_{j,j}
			[\hat{C}_i]_{a,j}
			+ 
			4
			[\hat{C}_i]_{k,k}
			[\hat{C}_i]_{a,j}
			+ 
			8
			[\hat{C}_i]_{k,a}
			[\hat{C}_i]_{j,k}
			\right)
			&
			\\
			&
			\left(
			\mathbf{1}_{(a,k) = (e,f)}
			-
			\mathbf{1}_{(a,k) = (f,e)}
			\right)
			\left(
			\mathbf{1}_{(j,k) = (g,h)}
			-
			\mathbf{1}_{(j,k) = (h,g)}
			\right).
		\end{flalign*}
		Then, since $\hat{C}_i$ and $\hat{C}_j$ are independent for $|i - j| \geq L$ and from \eqref{supp:eq:lem:bound:deux}, we obtain
		\begin{flalign*}
			&
			E_{e,f,g,h}
			= 
			o_p(1) + 
			&
			\\
			&
			\frac{1}{K}
			\sum_{i=1}^K
			\sum_{j=1}^p
			\sum_{k,a = 1}^p
			\left( 
			- 
			4
			\mathbb{E} \left[
			[\hat{C}_i]_{j,j}
			[\hat{C}_i]_{a,j}
			\right]
			+ 
			4
			\mathbb{E} \left[
			[\hat{C}_i]_{k,k}
			[\hat{C}_i]_{a,j}
			\right]
			+ 
			8
			\mathbb{E} \left[
			[\hat{C}_i]_{k,a}
			[\hat{C}_i]_{j,k}
			\right]
			\right)
			&
			\\
			&
			\left(
			\mathbf{1}_{(a,k) = (e,f)}
			-
			\mathbf{1}_{(a,k) = (f,e)}
			\right)
			\left(
			\mathbf{1}_{(j,k) = (g,h)}
			-
			\mathbf{1}_{(j,k) = (h,g)}
			\right)
			&
			\\
			& = o_p(1) +  S_1 + S_2 + S_3,
		\end{flalign*}
		say. We have, from \eqref{supp:eq:moment:product:hat:cov}, and recalling that $1 \leq e < f \leq p$ and $1 \leq g < h \leq p$,
		\begin{align*}
			S_1  
			= &
			\frac{1}{K}
			\sum_{i=1}^K
			\sum_{j=1}^p
			\sum_{k,a = 1}^p
			\left( 
			- 
			4
			\mathbb{E} \left[
			[\hat{C}_i]_{j,j}
			[\hat{C}_i]_{a,j}
			\right]
			\right)
			\left(
			\mathbf{1}_{(a,k) = (e,f)}
			-
			\mathbf{1}_{(a,k) = (f,e)}
			\right)
			\left(
			\mathbf{1}_{(j,k) = (g,h)}
			-
			\mathbf{1}_{(j,k) = (h,g)}
			\right)
			\\
			= &
			\frac{1}{K}
			\sum_{i=1}^K
			\sum_{j=1}^p
			\sum_{k,a = 1}^p
			\left( 
			-  4
			\right)
			\left(
			\mathbf{1}_{a=j}
			[C_i]_{j,j}^2
			+
			2
			\mathbf{1}_{a=j}
			\frac{1}{s^2} 
			\sum_{m,n=1}^s
			\left[
			D_{Z,i}^{(m,n)}
			\right]^2_{j,j}
			\right)
			\\
			&
			\left(
			\mathbf{1}_{(a,k) = (e,f)}
			-
			\mathbf{1}_{(a,k) = (f,e)}
			\right)
			\left(
			\mathbf{1}_{(j,k) = (g,h)}
			-
			\mathbf{1}_{(j,k) = (h,g)}
			\right)
			\\
			= &
			\mathbf{1}_{(e,f) = (g,h)}
			\frac{1}{K}
			\sum_{i=1}^K
			\left(
			- 
			4
			[C_i]_{e,e}^2
			-
			8
			\frac{1}{s^2} 
			\sum_{m,n=1}^s
			\left[
			D_{Z,i}^{(m,n)}
			\right]^2_{e,e}
			- 
			4
			[C_i]_{f,f}^2
			-
			8
			\frac{1}{s^2} 
			\sum_{m,n=1}^s
			\left[
			D_{Z,i}^{(m,n)}
			\right]^2_{f,f}
			\right).
		\end{align*}
		Then, we have
		\begin{align*}
			S_2  
			= &
			\frac{1}{K}
			\sum_{i=1}^K
			\sum_{j=1}^p
			\sum_{k,a = 1}^p
			\left( 
			4
			\mathbb{E} \left[
			[\hat{C}_i]_{k,k}
			[\hat{C}_i]_{a,j}
			\right]
			\right)
			\left(
			\mathbf{1}_{(a,k) = (e,f)}
			-
			\mathbf{1}_{(a,k) = (f,e)}
			\right)
			\left(
			\mathbf{1}_{(j,k) = (g,h)}
			-
			\mathbf{1}_{(j,k) = (h,g)}
			\right)
			\\
			= &
			\frac{1}{K}
			\sum_{i=1}^K
			\sum_{j=1}^p
			\sum_{k,a = 1}^p
			4
			\left(
			\mathbf{1}_{a=j}
			[C_i]_{k,k}
			[C_i]_{a,j}
			+2
			\mathbf{1}_{k=a}
			\mathbf{1}_{k=j}
			\frac{1}{s^2} 
			\sum_{m,n=1}^s
			\left[
			D_{Z,i}^{(m,n)}
			\right]^2_{k,k}
			\right)
			\\
			&
			\left(
			\mathbf{1}_{(a,k) = (e,f)}
			-
			\mathbf{1}_{(a,k) = (f,e)}
			\right)
			\left(
			\mathbf{1}_{(j,k) = (g,h)}
			-
			\mathbf{1}_{(j,k) = (h,g)}
			\right)
			\\
			= &
			\frac{1}{K}
			\sum_{i=1}^K
			\sum_{j=1}^p
			\sum_{k = 1}^p
			4
			\left(
			[C_i]_{k,k}
			[C_i]_{j,j}
			\right)
			\left(
			\mathbf{1}_{(j,k) = (e,f)}
			-
			\mathbf{1}_{(j,k) = (f,e)}
			\right)
			\left(
			\mathbf{1}_{(j,k) = (g,h)}
			-
			\mathbf{1}_{(j,k) = (h,g)}
			\right)
			\\
			= &
			\mathbf{1}_{(e,f) = (g,h)}
			8
			\frac{1}{K}
			\sum_{i=1}^K
			[C_i]_{e,e}
			[C_i]_{f,f}.
		\end{align*}
		
		We have
		\begin{align*}
			S_3  
			= &
			\frac{1}{K}
			\sum_{i=1}^K
			\sum_{j=1}^p
			\sum_{k,a = 1}^p
			\left( 
			8
			\mathbb{E} \left[
			[\hat{C}_i]_{k,a}
			[\hat{C}_i]_{j,k}
			\right]
			\right)
			\left(
			\mathbf{1}_{(a,k) = (e,f)}
			-
			\mathbf{1}_{(a,k) = (f,e)}
			\right)
			\left(
			\mathbf{1}_{(j,k) = (g,h)}
			-
			\mathbf{1}_{(j,k) = (h,g)}
			\right)
			\\
			= &
			\frac{1}{K}
			\sum_{i=1}^K
			\sum_{j=1}^p
			\sum_{k,a = 1}^p
			8
			\left(
			[C_i]_{k,a}
			[C_i]_{j,k}
			+
			\mathbf{1}_{k=j}
			\mathbf{1}_{a=k}
			\frac{1}{s^2} 
			\sum_{m,n=1}^s
			\left[
			D_{Z,i}^{(m,n)}
			\right]_{k,k}
			\left[
			D_{Z,i}^{(m,n)}
			\right]_{j,j}
			\right.
			\\
			&
			\left.
			+
			\mathbf{1}_{k=k}
			\mathbf{1}_{a=j}
			\frac{1}{s^2} 
			\sum_{m,n=1}^s
			\left[
			D_{Z,i}^{(m,n)}
			\right]_{k,k}
			\left[
			D_{Z,i}^{(m,n)}
			\right]_{a,a}
			\right)
			\\
			&
			\left(
			\mathbf{1}_{(a,k) = (e,f)}
			-
			\mathbf{1}_{(a,k) = (f,e)}
			\right)
			\left(
			\mathbf{1}_{(j,k) = (g,h)}
			-
			\mathbf{1}_{(j,k) = (h,g)}
			\right)
			\\
			= &
			\frac{1}{K}
			\sum_{i=1}^K
			\sum_{j=1}^p
			\sum_{k = 1}^p
			8
			\left(
			\frac{1}{s^2} 
			\sum_{m,n=1}^s
			\left[
			D_{Z,i}^{(m,n)}
			\right]_{k,k}
			\left[
			D_{Z,i}^{(m,n)}
			\right]_{j,j}
			\right)
			\\
			&
			\left(
			\mathbf{1}_{(j,k) = (e,f)}
			-
			\mathbf{1}_{(j,k) = (f,e)}
			\right)
			\left(
			\mathbf{1}_{(j,k) = (g,h)}
			-
			\mathbf{1}_{(j,k) = (h,g)}
			\right)
			\\
			= &
			\mathbf{1}_{(e,f) = (g,h)}
			16
			\frac{1}{K}
			\sum_{i=1}^K
			\frac{1}{s^2} 
			\sum_{m,n=1}^s
			\left[
			D_{Z,i}^{(m,n)}
			\right]_{e,e}
			\left[
			D_{Z,i}^{(m,n)}
			\right]_{f,f}.
		\end{align*}
		
		Putting together the expressions of $S_1$, $S_2$ and $S_3$, we obtain, for $1 \leq e < f \leq p$ and $1 \leq g < h \leq p$,
		\begin{align*}
			E_{e,f,g,h}
			= &
			o_p(1) - 
			4
			\mathbf{1}_{(e,f) = (g,h)}
			\frac{1}{K}
			\sum_{i=1}^K
			\left(
			[C_i]_{e,e}
			-
			[C_i]_{f,f}
			\right)^2
			\\
			& 
			- 
			8
			\mathbf{1}_{(e,f) = (g,h)}
			\frac{1}{K}
			\sum_{i=1}^K
			\frac{1}{s^2} 
			\sum_{m,n=1}^s
			\left(
			\left[
			D_{Z,i}^{(m,n)}
			\right]_{e,e}
			-
			\left[
			D_{Z,i}^{(m,n)}
			\right]_{f,f}
			\right)^2.
		\end{align*}
	\end{proof}
	
	\begin{lem} \label{supp:lem:third:derivative:bounded}
		Assume that conditions  \ref{supp:cond:block:independence}, \ref{supp:cond:max:variance} and \ref{supp:cond:asymptotically:distinct:eigenvalues} hold.
		Let us write, for $1 \leq e < f \leq p$, $1 \leq g < h \leq p$ and $1 \leq k < l \leq p$ and for $V \in \mathcal{U}_p$,
		\[
		F_{e,f,g,h,k,l}(V) =
		\frac{ \partial^3 }{ \partial V_{e,f} \partial V_{g,h}
			\partial V_{k,l}
		}
		\left. \hat{\Delta}_K( \exp(S(V)) ) \right|_{V},
		\]
		that is, $F_{e,f,g,h,k,l}(V)$ is one of the third order partial derivatives of the function $ V \to \hat{\Delta}_{K}(\exp( S( V ) ) )$, evaluated at $V \in \mathcal{U}_p$. Then, there exists $\epsilon >0$ such that
		\[
		\max_{ \substack{
				1 \leq e < f \leq p \\ 
				1 \leq g < h \leq p \\
				1 \leq k < l \leq p
		} }
		\sup_{||S(V)|| \leq \epsilon}
		\left| F_{e,f,g,h,k,l}(V) \right|
		=
		O_p(1).
		\]

	\end{lem}
	
	\begin{proof}
		
		We have, with the notation of Lemma \ref{supp:lem:equi:gradient},
		\begin{align*}
			\hat{\Delta}_K(\exp(S(V)))
			= &
			\frac{1}{K}
			\sum_{i=1}^K
			\sum_{j=1}^p
			\left(
			\exp(S(V))_j'
			\hat{\bar{\mathrm{cov}}}_{Z,K}^{-1/2}
			\hat{\mathrm{cov}}_{Z,i}
			\hat{\bar{\mathrm{cov}}}_{Z,K}^{-1/2}
			\exp(S(V))_j
			\right)^2
			\\
			= &
			\frac{1}{K}
			\sum_{i=1}^K
			\sum_{j=1}^p
			\sum_{a,b,c,d=1}^p
			\exp(S(V))_{j,a}
			\left[
			\hat{\bar{C}}_i
			\right]_{a,b}
			\exp(S(V))_{j,b}
			\exp(S(V))_{j,c}
			\left[
			\hat{\bar{C}}_i
			\right]_{c,d}
			\exp(S(V))_{j,d}.
		\end{align*}
		Hence, it is sufficient to show that, for any $j,a,b,c,d  =1,...,p$ and any $1 \leq e < f \leq p$, $1 \leq g < h \leq p$ and $1 \leq k < l \leq p$,
		\begin{flalign*}
			&
			\sup_{||S(V)|| \leq \epsilon}
			\left|
			\frac{1}{K}
			\sum_{i=1}^K
			\frac{ \partial^3 }{ \partial V_{e,f} \partial V_{g,h}
				\partial V_{k,l}
			}
			\left.
			\left(
			\exp(S(V))_{j,a}
			\left[
			\hat{\bar{C}}_i
			\right]_{a,b}
			\exp(S(V))_{j,b}
			\exp(S(V))_{j,c}
			\left[
			\hat{\bar{C}}_i
			\right]_{c,d}
			\exp(S(V))_{j,d}
			\right)
			\right|_{V}
			\right|
			&
			\\
			&
			= O_p(1).
			&
		\end{flalign*}
		We let 
		\[
		m(V) = \exp(S(V))_{j,a} \exp(S(V))_{j,b} \exp(S(V))_{j,c} \exp(S(V))_{j,d}.
		\]
		From e.g. Proposition 3.2.1 in \cite{hilgert2011structure}, the function $m$ is infinitely differentiable and so it is sufficient to show
		\[
		\sup_{||S(V)|| \leq \epsilon}
		\left|
		\frac{ \partial^3 }{ \partial V_{e,f} \partial V_{g,h}
			\partial V_{k,l}
		}
		m(V)|_V 
		\right|
		\left|
		\frac{1}{K}
		\sum_{i=1}^K
		\left[
		\hat{\bar{C}}_i
		\right]_{a,b}
		\left[
		\hat{\bar{C}}_i
		\right]_{c,d}
		\right|
		= O_p(1).
		\]
		The last display holds true because of Lemma \ref{supp:lem:basic:bounds}.
	\end{proof}
	
	\begin{lem} \label{supp:lem:non:zero:hessian}
		Assume that Condition \ref{supp:cond:asymptotically:distinct:eigenvalues} holds.
		With the notation of Lemma \ref{supp:lem:Hessian:equi}, let, for any $e,f=1,...,p$, $e \neq f$, 
		\begin{align*}
			H_{e,f}
			=
			4
			\frac{1}{K}
			\sum_{i=1}^K
			\left(
			[C_i]_{e,e}
			-
			[C_i]_{f,f}
			\right)^2
			+
			8
			\frac{1}{K}
			\sum_{i=1}^K
			\frac{1}{s^2} 
			\sum_{m,n=1}^s
			\left(
			\left[
			D_{Z,i}^{(m,n)}
			\right]_{e,e}
			-
			\left[
			D_{Z,i}^{(m,n)}
			\right]_{f,f}
			\right)^2.
		\end{align*}
		Then we have, for any $e,f=1,...,p$, $e \neq f$,
		\[
		\liminf_{K \to \infty}
		H_{e,f}
		>0.
		\]
	\end{lem}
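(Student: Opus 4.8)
The plan is to observe that $H_{e,f}$ is a sum of two terms, each an average over $i=1,\ldots,K$ of non-negative quantities, so it suffices to bound the first term from below and simply discard the second. Concretely, for every $K$ we have
\[
H_{e,f} \geq \frac{4}{K}\sum_{i=1}^K \left([C_i]_{e,e}-[C_i]_{f,f}\right)^2,
\]
and the task reduces to showing that the $\liminf$ of the right-hand side is strictly positive.

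To control this average, the key idea is to restrict the sum to the subsequence of blocks singled out by Condition~\ref{supp:cond:asymptotically:distinct:eigenvalues}. That condition supplies a strictly increasing integer sequence $(i_k)_{k\in\mathbb{N}}$ and a fixed $\delta>0$ such that, for every $k$, the diagonal entries of $\mathrm{cov}_{Z,i_k}=C_{i_k}$ are pairwise separated by at least $\delta$; in particular, since $e\neq f$, we get $([C_{i_k}]_{e,e}-[C_{i_k}]_{f,f})^2\geq\delta^2$ for all $k$. Keeping in the sum only the indices $i$ that equal some $i_k\leq K$ (a valid restriction, as these are distinct integers in $\{1,\ldots,K\}$) and bounding each retained summand below by $\delta^2$ gives
\[
\frac{1}{K}\sum_{i=1}^K \left([C_i]_{e,e}-[C_i]_{f,f}\right)^2 \geq \frac{1}{K}\sum_{k:\, i_k\leq K}\delta^2 = \frac{N_K}{K}\,\delta^2,
\]
where $N_K=\#\{k; i_k\leq K\}$ as in Condition~\ref{supp:cond:asymptotically:distinct:eigenvalues}.

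Finally I would take the $\liminf$ as $K\to\infty$. Combining the two displays yields $H_{e,f}\geq 4\delta^2\,N_K/K$, and since Condition~\ref{supp:cond:asymptotically:distinct:eigenvalues} guarantees $\liminf_{K\to\infty} N_K/K>0$, we obtain $\liminf_{K\to\infty}H_{e,f}\geq 4\delta^2\liminf_{K\to\infty}N_K/K>0$, as desired. The argument is essentially a single substitution, so there is no serious obstacle; the only point requiring care is to apply the separation bound to the correct term (the variance-difference term, which is exactly what Condition~\ref{supp:cond:asymptotically:distinct:eigenvalues} controls) and to note that the second, innovation-type term enters only with a positive sign and may therefore simply be dropped.
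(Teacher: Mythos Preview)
Your argument is correct and is exactly the ``direct consequence of Condition~\ref{supp:cond:asymptotically:distinct:eigenvalues}'' that the paper invokes without further detail: drop the non-negative second term, restrict the first average to the subsequence $(i_k)$, use the uniform $\delta$-separation to get $H_{e,f}\geq 4\delta^2 N_K/K$, and conclude via $\liminf N_K/K>0$.
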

	\begin{proof}
		The lemma is a direct consequence of Condition \ref{supp:cond:asymptotically:distinct:eigenvalues}.
	\end{proof}
	
	\begin{thm} \label{supp:thm:TCL}
		
		Assume that conditions  \ref{supp:cond:block:independence}, \ref{supp:cond:max:variance} and \ref{supp:cond:asymptotically:distinct:eigenvalues} hold.
		Let $\Sigma_{\nabla}$ be as in Lemma \ref{supp:lem:TCL:gradient}. Let $\Sigma_{\hat{U}}$ be the $p^2 \times p^2$ covariance matrix defined by, for $ e,f,g,h =1,...,p$,
		\begin{equation*}
			[\Sigma_{\hat{U}}]_{(e,f),(g,h)}
			=
			\begin{cases}
				0
				&
				~ ~ \mbox{if} ~ ~
				e=f ~ \mbox{or} ~ g=h
				\\
				\frac{1}{H_{e,f} H_{g,h}}
				[\Sigma_{\nabla}]_{(e,f),(g,h)}
				&
				~ ~ \mbox{if} ~ ~
				e < f \; , \; g < h
				\\
				\frac{1}{H_{e,f} H_{g,h}}
				\left( - [\Sigma_{\nabla}]_{(e,f),(h,g)} \right)
				&
				~ ~ \mbox{if} ~ ~
				e < f \; , \; g > h
				\\
				\frac{1}{H_{e,f} H_{g,h}}
				\left( - [\Sigma_{\nabla}]_{(f,e),(g,h)} \right)
				&
				~ ~ \mbox{if} ~ ~
				e > f \; , \; g < h
				\\
				\frac{1}{H_{e,f} H_{g,h}}
				[\Sigma_{\nabla}]_{(f,e),(h,g)}
				&
				~ ~ \mbox{if} ~ ~
				e > f \; , \; g > h.
			\end{cases}
		\end{equation*}
		Then for any sequence $\hat{U}_{Z,K}$ in \eqref{supp:eq:ref:hatUZK}, there exists a sequence $\hat{G}_K \in \mathcal{G}_p$ such that, with $Q_{\hat{U}}$ the distribution of $\sqrt{K} ( \hat{G}_K \hat{U}_{Z,K} - I_p )$, we have
		\[
		d_w 
		\left( 
		Q_{\hat{U}} , \mathcal{N}(0,\Sigma_{\hat{U}}) 
		\right)
		\underset{K \to \infty}{\overset{}{\to}} 0.
		\]
		Furthermore, the matrix $\Sigma_{\hat{U}}$ is bounded as $K \to \infty$. 
	\end{thm}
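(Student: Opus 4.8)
The plan is to exploit the exponential parametrization $U = \exp(S(V))$ together with the fact, established in Lemma \ref{supp:lem:exp:matrix}, that after multiplication by a suitable $\hat{G}_K \in \mathcal{G}_p$ we may write $\hat{G}_K \hat{U}_{Z,K} = \exp(S(\hat{V}_{Z,K}))$ with $\hat{V}_{Z,K} \overset{p}{\to} 0$ and with the gradient $\nabla_{\hat{V}_{Z,K}}$ of $V \mapsto \hat{\Delta}_K(\exp(S(V)))$ vanishing with probability tending to one. This reduces the problem to a standard $M$-estimation Taylor expansion, carried out on the flat coordinate space $\mathcal{U}_p$ rather than on the manifold $\mathcal{O}_p$, for which all the analytic ingredients are supplied by the preceding lemmas.

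First I would Taylor-expand the vanishing gradient. Using the fundamental theorem of calculus along the segment joining $0$ and $\hat{V}_{Z,K}$, write $0 = \nabla_0 + \bar{\mathcal{H}}\,\hat{V}_{Z,K}$, where $\bar{\mathcal{H}} = \int_0^1 \mathcal{H}(t\hat{V}_{Z,K})\,dt$ is the averaged Hessian of $V \mapsto \hat{\Delta}_K(\exp(S(V)))$. The third-derivative bound of Lemma \ref{supp:lem:third:derivative:bounded}, valid on an $\epsilon$-neighborhood of $0$, together with $\hat{V}_{Z,K}\overset{p}{\to}0$, gives $\bar{\mathcal{H}} = \mathcal{H}(0) + o_p(1)$; and Lemma \ref{supp:lem:Hessian:equi} identifies $\mathcal{H}(0)$ as $-\mathrm{diag}((H_{e,f})_{e<f}) + o_p(1)$, diagonal in the $\mathcal{U}_p$ index. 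Since Lemma \ref{supp:lem:non:zero:hessian} yields $\liminf_K H_{e,f} > 0$ for all $e \neq f$, with probability tending to one $\bar{\mathcal{H}}$ is invertible and $\bar{\mathcal{H}}^{-1} = -\mathrm{diag}((1/H_{e,f})_{e<f}) + o_p(1)$. Solving gives $\hat{V}_{Z,K} = -\bar{\mathcal{H}}^{-1}\nabla_0$; combined with $\nabla_0 = \bar{\nabla}_0 + O_p(1/K)$ from Lemma \ref{supp:lem:equi:gradient} and $\sqrt{K}\bar{\nabla}_0 = O_p(1)$ (which follows from the tightness implied by Lemma \ref{supp:lem:TCL:gradient} and the boundedness of $\Sigma_\nabla$), this gives first $\hat{V}_{Z,K} = O_p(1/\sqrt{K})$ and then the linearization
\[
\sqrt{K}\,\hat{V}_{Z,K} = \mathrm{diag}\big((1/H_{e,f})_{e<f}\big)\,\sqrt{K}\,\bar{\nabla}_0 + o_p(1).
\]
Transferring back through $\exp(S(V)) = I_p + S(V) + O(\|V\|^2)$ with $S$ linear, and using $\sqrt{K}\|\hat{V}_{Z,K}\|^2 = O_p(1/\sqrt{K}) = o_p(1)$, yields
\[
\sqrt{K}\big(\hat{G}_K \hat{U}_{Z,K} - I_p\big) = S\big(\sqrt{K}\,\hat{V}_{Z,K}\big) + o_p(1).
\]
A direct bookkeeping of the entries of $S(\cdot)$ against the five cases defining $\Sigma_{\hat{U}}$, using $H_{e,f} = H_{f,e}$ and the sign changes produced by $S$, shows that the covariance of the leading term is exactly $\Sigma_{\hat{U}}$.

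Finally, the moving-target nature of the limit, namely that $\Sigma_\nabla$, the $H_{e,f}$, and hence $\Sigma_{\hat{U}}$ all depend on $K$, must be handled through the $d_w$ metric by a subsequence argument mirroring the proof of Lemma \ref{supp:lem:TCL:gradient}. It suffices to show that every subsequence admits a further subsequence along which $d_w(Q_{\hat{U}}, \mathcal{N}(0,\Sigma_{\hat{U}})) \to 0$. Along any subsequence, boundedness of $\Sigma_\nabla$ and of the $H_{e,f}$ (bounded below by Lemma \ref{supp:lem:non:zero:hessian}) lets me extract a further subsequence on which $\Sigma_\nabla \to \Sigma_\nabla^\ast$ and $H_{e,f}\to H_{e,f}^\ast > 0$, so that $\Sigma_{\hat{U}} \to \Sigma_{\hat{U}}^\ast$. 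On it, Lemma \ref{supp:lem:TCL:gradient} gives $\sqrt{K}\bar{\nabla}_0 \overset{d}{\to}\mathcal{N}(0,\Sigma_\nabla^\ast)$, and the linearization above with Slutsky's lemma and the continuity of $S$ gives $\sqrt{K}(\hat{G}_K\hat{U}_{Z,K}-I_p)\overset{d}{\to}\mathcal{N}(0,\Sigma_{\hat{U}}^\ast)$; since also $\mathcal{N}(0,\Sigma_{\hat{U}})\overset{d}{\to}\mathcal{N}(0,\Sigma_{\hat{U}}^\ast)$, the triangle inequality for $d_w$ closes the argument, and boundedness of $\Sigma_{\hat{U}}$ follows from the same estimates. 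The main obstacle I anticipate is not any single computation — these are delegated to the lemmas — but rather threading the $M$-estimation linearization cleanly through the subsequence/$d_w$ framework, i.e. ensuring that the $o_p(1)$ terms in $\bar{\mathcal{H}}^{-1}$ and in the exponential remainder remain negligible uniformly as the $K$-dependent covariance $\Sigma_{\hat U}$ drifts, rather than merely pointwise in $K$.
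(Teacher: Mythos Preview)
Your proposal is correct and follows essentially the same approach as the paper: invoke Lemma~\ref{supp:lem:exp:matrix} to work in the flat coordinates $\mathcal{U}_p$, Taylor-expand the vanishing gradient, identify the Hessian via Lemmas~\ref{supp:lem:Hessian:equi} and~\ref{supp:lem:non:zero:hessian}, control the remainder via Lemma~\ref{supp:lem:third:derivative:bounded}, and conclude from Lemma~\ref{supp:lem:TCL:gradient}. The only cosmetic difference is that you write the expansion as $0=\nabla_0+\bar{\mathcal{H}}\,\hat V_{Z,K}$ with $\bar{\mathcal{H}}=\int_0^1\mathcal{H}(t\hat V_{Z,K})\,dt$, whereas the paper writes $0=\nabla_0+\mathcal{E}[\hat V_{Z,K}]+\tfrac12\mathcal{F}[\hat V_{Z,K}]$ with a quadratic Lagrange remainder; your integral form is slightly cleaner since it gives $\hat V_{Z,K}=O_p(K^{-1/2})$ directly without the small bootstrap step the paper implicitly uses, and your explicit subsequence argument for the $d_w$ convergence makes precise what the paper leaves to the reader.
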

	
	\begin{proof}
		Because of Lemma \ref{supp:lem:non:zero:hessian} and of the fact that the matrix $\Sigma_{\nabla}$ is bounded, the matrix $\Sigma_{\hat{U}}$ is bounded. 
		With the notation of Lemma \ref{supp:lem:exp:matrix}, we have
		\[
		\nabla_{ \hat{V}_{Z,K} } = 0,
		\]
		with probability going to one as $K \to \infty$, where  $\hat{V}_{Z,K} \to 0$ in probability as $K \to \infty$. 
		We consider the event $\nabla_{ \hat{V}_{Z,K} } = 0$ in the rest of the proof.
		We will use a Taylor expansion argument that is classical in M-estimation, but that is here somehow technical to write because we manipulate matrices.
		There exist $p(p-1)/2$ elements of $\mathcal{U}_p$ of the form
		\[
		\{ \tilde{V}_{Z,K,i,j} , 1 \leq i < j <p \},
		\]
		such that each of these elements belongs to the segment with endpoints $0$ and $\hat{V}_{Z,K}$, and such that
		\begin{equation} \label{supp:eq:taylor:hat:v}
			0 = \nabla_0 + \mathcal{E} \left[   \hat{V}_{Z,K}  \right]
			+ \frac{1}{2} \mathcal{F}[ \hat{V}_{Z,K}  ],
		\end{equation}
		where $\mathcal{E}$ is the linear application on $\mathcal{U}_p$ defined by
		\[
		[ \mathcal{E}(V) ]_{a,b}
		=
		\sum_{ \substack{ e,f = 1, \ldots ,p \\ e<f }}
		E_{a,b,e,f}
		V_{e,f},
		\]
		for $1 \leq a < b \leq p$
		with $E$ as in Lemma \ref{supp:lem:Hessian:equi} and where $ \mathcal{F}$ is the quadratic application from 
		$\mathcal{U}_p$ to $\mathcal{U}_p$ such that, for $1 \leq i < j \leq p$, we have
		\[
		(\mathcal{F}[V] )_{i,j}
		=
		\sum_{ \substack{ e,f = 1, \ldots ,p \\ e<f }}
		\sum_{ \substack{ e',f' = 1, \ldots ,p \\ e'<f' }}
		F_{e,f,e',f',i,j}( \tilde{V}_{Z,K,i,j} )
		V_{e,f}
		V_{e',f'},
		\]
		with $F$ as in Lemma \ref{supp:lem:third:derivative:bounded}.
		Remark that, from Lemma \ref{supp:lem:Hessian:equi} and with the notation of Lemma \ref{supp:lem:non:zero:hessian}, we have 
		$\mathcal{E} = - \bar{\mathcal{E}} + o_p(1)$ where $\bar{\mathcal{E}}$ is the linear application on $\mathcal{U}_0$ defined by,
		for $1 \leq e < f \leq p$ 
		\[
		(\bar{\mathcal{E}}[V])_{e,f}
		=
		H_{e,f} V_{e,f}.
		\] 
		Furthermore, from  Lemma \ref{supp:lem:non:zero:hessian}, $\bar{\mathcal{E}}$ is invertible for $K$ large enough and we have from \eqref{supp:eq:taylor:hat:v} that
		\[
		\left[ \bar{\mathcal{E}}^{-1} + \mathcal{R}_1 \right]
		\left[ \nabla_0 \right] = \hat{V}_{Z,K}
		-
		\frac{1}{2}
		\left[ \bar{\mathcal{E}}^{-1} + \mathcal{R}_1 \right]
		\left[
		\mathcal{F}[ \hat{V}_{Z,K}  ]
		\right],
		\]
		where $\mathcal{R}_1 = o_p( 1 )$.
		Furthermore, we can let 
		\[
		\mathcal{R}_2 = 
		- \frac{1}{2}
		\left[ \bar{\mathcal{E}}^{-1} + \mathcal{R}_1 \right]
		\left[
		\mathcal{F}[ \hat{V}_{Z,K}  ]
		\right]
		\]
		and from Lemma \ref{supp:lem:third:derivative:bounded} and because $\hat{V}_{Z,K} \to 0$, we obtain $\mathcal{R}_2 = O_p( || \hat{V}_{Z,K} ||^2 )$. Hence, we obtain
		\begin{equation} \label{supp:eq:in:proof:TCL:Z:U:taylor}
			\sqrt{K}
			\hat{V}_{Z,K}
			+
			\sqrt{K}
			\mathcal{R}_2
			=
			\left[ \bar{\mathcal{E}}^{-1} + \mathcal{R}_1 \right]
			\left[ \sqrt{K} \nabla_0 \right].
		\end{equation}
		We have $ K^{1/2} \nabla_0 = K^{1/2} \bar{\nabla}_0 + o_p(1) = O_p(1)$ from Lemma \ref{supp:lem:equi:gradient}. Indeed, Lemma \ref{supp:lem:TCL:gradient} implies that $ K^{1/2} \bar{\nabla}_0 = O_p(1)$. Coming back to \eqref{supp:eq:in:proof:TCL:Z:U:taylor}, this implies
		\[
		\sqrt{K}
		\hat{V}_{Z,K}
		=
		\bar{\mathcal{E}}^{-1}
		\left[
		\sqrt{K} \bar{\nabla}_0 
		\right]
		+
		o_p(1).
		\]
		Then from Lemma \ref{supp:lem:exp:matrix}, there exists $\hat{G}_K \in \mathcal{G}_p$ such that
		\[
		\sqrt{K}
		\hat{G}_K
		\hat{U}_{Z,K}
		=
		\sqrt{K}
		\exp 
		\left(
		S
		\left(
		\bar{\mathcal{E}}^{-1}
		\left[
		\bar{\nabla}_0 
		\right]
		+
		o_p(K^{-1/2})
		\right)
		\right).
		\]
		Because the differential of $\exp$ around $0$ is identity, we obtain
		\begin{equation} \label{supp:eq:taylor:final:hat:v}
			\sqrt{K}
			\left(
			\hat{G}_K
			\hat{U}_{Z,K}
			-
			I_p
			\right)
			=
			S
			\left(
			\bar{\mathcal{E}}^{-1}
			\left[
			\sqrt{K} \bar{\nabla}_0 
			\right]
			\right)
			+
			o_p(1).
		\end{equation}
		Thus, the proof is concluded from the central limit theorem on $\sqrt{K} \bar{\nabla}_0 $ obtained in Lemma \ref{supp:lem:TCL:gradient}.
	\end{proof}
	
	\begin{thm} \label{supp:thm:joint:CLT:Z}
		Assume that conditions  \ref{supp:cond:block:independence}, \ref{supp:cond:max:variance} and \ref{supp:cond:asymptotically:distinct:eigenvalues} hold. 
		Let $\Sigma_{\hat{\bar{\mathrm{cov}}}_{Z,K}}$ be the $p^2 \times p^2$ covariance matrix of $ K^{1/2}( \hat{\bar{\mathrm{cov}}}_{Z,K} - I_p)$.  Let $\Sigma_{\text{cross}}$ be the $p^2 \times p^2$ cross covariance matrix between $S
		\left(
		\bar{\mathcal{E}}^{-1}
		\left[
		\sqrt{K} \bar{\nabla}_0 
		\right]
		\right)$ and $ K^{1/2} (\hat{\bar{\mathrm{cov}}}_{Z,K} - I_p)$. 
		Let
		\[
		\Sigma_{ \hat{W}_{Z,K} }
		=
		\Sigma_{ \hat{U} }
		+
		\frac{1}{4}
		\Sigma_{\hat{\bar{\mathrm{cov}}}_{Z,K}}
		-
		\Sigma_{\text{cross}}.
		\] 
		Then for any sequence $\hat{U}_{Z,K}$ in \eqref{supp:eq:ref:hatUZK}, there exists a sequence $\hat{G}_K \in \mathcal{G}_p$ such that, with $Q_{\hat{W}_{Z,K} }$ the distribution of $ K^{1/2} ( \hat{G}_K \hat{W}_{Z,K} - I_p )$, we have
		\[
		d_w 
		\left( 
		Q_{\hat{W}_{Z,K} } , \mathcal{N}(0,\Sigma_{ \hat{W}_{Z,K} }) 
		\right)
		\underset{K \to \infty}{\overset{}{\to}} 0.
		\]
		Furthermore, the matrix $\Sigma_{ \hat{W}_{Z,K} }$ is bounded as $K \to \infty$.
	\end{thm}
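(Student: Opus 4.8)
The plan is to reduce the asymptotic distribution of $\hat{W}_{Z,K} = \hat{U}_{Z,K}\hat{\bar{\mathrm{cov}}}_{Z,K}^{-1/2}$ to that of a fixed linear combination of two quantities whose joint fluctuations are already controlled: the gradient $\sqrt{K}\bar{\nabla}_0$ driving $\hat{U}_{Z,K}$, and the whitening error $\sqrt{K}(\hat{\bar{\mathrm{cov}}}_{Z,K}-I_p)$. I would fix the sequence $\hat{G}_K \in \mathcal{G}_p$ supplied by Theorem \ref{supp:thm:TCL}, so that \eqref{supp:eq:taylor:final:hat:v} holds. First I would linearise the whitening factor: by \eqref{supp:eq:lem:bound:trois} we have $\hat{\bar{\mathrm{cov}}}_{Z,K}-I_p = O_p(K^{-1/2})$, and since $M \mapsto M^{-1/2}$ is differentiable at $I_p$ with differential $H \mapsto -\tfrac12 H$, a delta-method expansion gives
\[
\sqrt{K}\left(\hat{\bar{\mathrm{cov}}}_{Z,K}^{-1/2}-I_p\right) = -\tfrac12\sqrt{K}\left(\hat{\bar{\mathrm{cov}}}_{Z,K}-I_p\right)+o_p(1).
\]

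Next I would combine this with \eqref{supp:eq:taylor:final:hat:v}. Writing $\hat{G}_K\hat{U}_{Z,K}=I_p+A_K$ and $\hat{\bar{\mathrm{cov}}}_{Z,K}^{-1/2}=I_p+B_K$ with $A_K,B_K=O_p(K^{-1/2})$, the product satisfies $\hat{G}_K\hat{W}_{Z,K}-I_p = A_K+B_K+A_KB_K$, and the cross term is negligible since $\sqrt{K}A_KB_K=o_p(1)$. Substituting the two expansions yields the key representation
\[
\sqrt{K}\left(\hat{G}_K\hat{W}_{Z,K}-I_p\right) = S\!\left(\bar{\mathcal{E}}^{-1}\!\left[\sqrt{K}\bar{\nabla}_0\right]\right) - \tfrac12\sqrt{K}\left(\hat{\bar{\mathrm{cov}}}_{Z,K}-I_p\right)+o_p(1).
\]
Both leading terms are continuous (indeed linear) images of the pair $\big(\sqrt{K}\bar{\nabla}_0,\ \sqrt{K}(\hat{\bar{\mathrm{cov}}}_{Z,K}-I_p)\big)$, so it remains to establish a joint central limit theorem for this pair and then read off the covariance.

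For the joint CLT I would use the structural observation, already recorded around \eqref{supp:eq:for:CLT:dependent} and in the identity $\hat{\bar{\mathrm{cov}}}_{Z,K}-I_p = K^{-1}\sum_i(\hat{C}_i-\mathbb{E}[\hat{C}_i])$, that both coordinates are normalized averages built from the same blockwise quantities $\hat{C}_i$, which are $L$-dependent with uniformly bounded moments of every order by Lemma \ref{supp:lem:basic:bounds}. By the Cram\'er--Wold device, any fixed linear functional of the stacked vector is again a scalar average of $L$-dependent, bounded-moment variables, to which Theorem 2.1 in \cite{Neumann13} applies. As in the proof of Lemma \ref{supp:lem:TCL:gradient}, I would first pass to a subsequence along which the (bounded) covariance matrices converge, apply the CLT there, and conclude that the $d_w$ distance of the pair to the corresponding centred Gaussian tends to zero; boundedness of the covariances is what allows this to be phrased through $d_w$ without extracting a subsequence in the final statement.

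Finally, applying the fixed linear map of the above representation to the jointly Gaussian limit, $\sqrt{K}(\hat{G}_K\hat{W}_{Z,K}-I_p)$ is asymptotically Gaussian (in the $d_w$ sense) with covariance equal to that of $S(\bar{\mathcal{E}}^{-1}[\sqrt{K}\bar{\nabla}_0]) - \tfrac12\sqrt{K}(\hat{\bar{\mathrm{cov}}}_{Z,K}-I_p)$. Expanding the covariance of this difference produces exactly the three contributions $\Sigma_{\hat{U}}+\tfrac14\Sigma_{\hat{\bar{\mathrm{cov}}}_{Z,K}}-\Sigma_{\text{cross}}$, where $\Sigma_{\hat{U}}$ is the variance of the first term (identified in Theorem \ref{supp:thm:TCL}), the factor $\tfrac14$ arises from the $-\tfrac12$ prefactor, and $\Sigma_{\text{cross}}$ collects the cross-covariance term; each is bounded by the previous remarks together with Cauchy--Schwarz, giving boundedness of $\Sigma_{\hat{W}_{Z,K}}$. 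The main obstacle I anticipate is bookkeeping rather than conceptual: ensuring that the joint (not merely marginal) convergence is legitimate — which the common $\hat{C}_i$ representation and Cram\'er--Wold handle cleanly — and correctly matching the vectorized cross-covariance block, including the index transpositions between the $(e,f)$ and $(f,e)$ conventions, to the stated $\Sigma_{\text{cross}}$.
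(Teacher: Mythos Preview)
Your proposal is correct and follows essentially the same approach as the paper: fix $\hat{G}_K$ so that \eqref{supp:eq:taylor:final:hat:v} holds, linearise both factors of $\hat{W}_{Z,K}=\hat{U}_{Z,K}\hat{\bar{\mathrm{cov}}}_{Z,K}^{-1/2}$ to arrive at the representation $\sqrt{K}(\hat{G}_K\hat{W}_{Z,K}-I_p)=S(\bar{\mathcal{E}}^{-1}[\sqrt{K}\bar{\nabla}_0])-\tfrac12\sqrt{K}(\hat{\bar{\mathrm{cov}}}_{Z,K}-I_p)+o_p(1)$, and then apply a joint CLT to the pair $(\sqrt{K}\bar{\nabla}_0,\sqrt{K}(\hat{\bar{\mathrm{cov}}}_{Z,K}-I_p))$ by the same $L$-dependent average argument as in Lemma \ref{supp:lem:TCL:gradient}. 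The only cosmetic differences are that you make the product expansion $(I_p+A_K)(I_p+B_K)$ and the Cram\'er--Wold reduction explicit, whereas the paper writes the decomposition as $\sqrt{K}\hat{G}_K\hat{U}_{Z,K}(\hat{\bar{\mathrm{cov}}}_{Z,K}^{-1/2}-I_p)+\sqrt{K}(\hat{G}_K\hat{U}_{Z,K}-I_p)$ and invokes consistency of $\hat{G}_K\hat{U}_{Z,K}$ to kill the cross term.
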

	
	\begin{proof}
		The matrix $\Sigma_{ \hat{U} }$ is bounded from Theorem \ref{supp:thm:TCL}. The matrix $\Sigma_{\hat{\bar{\mathrm{cov}}}_{Z,K}}$ is bounded because $\hat{\bar{\mathrm{cov}}}_{Z,K}$ is an average of random matrices with bounded moments from Lemma \ref{supp:lem:basic:bounds} and such that two of them are independent if their index difference is larger or equal to $L$. Hence, the matrix $\Sigma_{ \hat{W}_{Z,K} }$ is bounded.

		Let $\hat{G}_K \in \mathcal{G}_p$ be such that \eqref{supp:eq:taylor:final:hat:v} holds.
		We consider the event where \eqref{supp:eq:taylor:final:hat:v} holds for the rest of the proof, which probability goes to $1$ as $K \to \infty$.
		We have
		\begin{align*}
			\sqrt{K} ( \hat{G}_K \hat{W}_{Z,K} - I_p )
			& =
			\sqrt{K} ( \hat{G}_K \hat{U}_{Z,K}  \hat{\bar{\mathrm{cov}}}_{Z,K}^{-1/2} - I_p )
			\\
			& = 
			\sqrt{K} \hat{G}_K \hat{U}_{Z,K} (\hat{\bar{\mathrm{cov}}}_{Z,K}^{-1/2} - I_p)
			+  
			\sqrt{K} (
			\hat{G}_K \hat{U}_{Z,K}
			-
			I_p      ).
		\end{align*} 
		From \eqref{supp:eq:taylor:final:hat:v}, Lemma \ref{supp:lem:basic:bounds} and Theorem \ref{supp:thm:consistency}, this yields
		\[
		\sqrt{K} ( \hat{G}_K \hat{W}_{Z,K} - I_p )
		=
		-\frac{1}{2}
		\sqrt{K}  (\hat{\bar{\mathrm{cov}}}_{Z,K} - I_p)
		+
		\sqrt{K} (
		\hat{G}_K \hat{U}_{Z,K}
		-
		I_p      )
		+o_p(1).
		\]
		From \eqref{supp:eq:taylor:final:hat:v}, this yields
		\begin{align} \label{supp:eq:for:joint:CLT}
			\sqrt{K} ( \hat{G}_K \hat{W}_{Z,K} - I_p )
			& =
			-\frac{1}{2}
			\sqrt{K}  (\hat{\bar{\mathrm{cov}}}_{Z,K} - I_p)
			+ 
			S
			\left(
			\bar{\mathcal{E}}^{-1}
			\left[
			\sqrt{K} \bar{\nabla}_0 
			\right]
			\right)
			+o_p(1).
		\end{align}
		Then, the quantity in \eqref{supp:eq:for:joint:CLT} is a linear function with bounded coefficients of the pair
		\[
		\begin{pmatrix}
			\sqrt{K} (\hat{\bar{\mathrm{cov}}}_{Z,K} - I_p  )
			\\
			\sqrt{K} \bar{\nabla}_0 
		\end{pmatrix}.
		\]
		We can show that this vector (in dimension $p^2 + p(p-1)/2$) is asymptotically Gaussian, exactly as in the proof of Lemma \ref{supp:lem:TCL:gradient}. Indeed, with the notation of this lemma, the term $(\hat{\bar{\mathrm{cov}}}_{Z,K} - I_p  )$ will contribute to the quantities $V_i^{(j,k)}$ (which definition can be extended also to the case $j \geq k$). Then the quantity in \eqref{supp:eq:for:joint:CLT} is also asymptotically Gaussian. Furthermore the mean vectors of the first two summands on the right hand side of  \eqref{supp:eq:for:joint:CLT} are zero from the proof of Lemma \ref{supp:lem:TCL:gradient} and since $\hat{\bar{\mathrm{cov}}}_{Z,K}$ has mean $I_p$. The covariance matrix of the sum of these two summands is given by $\Sigma_{\hat{W}_{Z,K}}$, which concludes the proof.
	\end{proof}
	
	\section{Extension of the results from $Z$ to $X$}
	
	We recall that we have the definitions
	\begin{equation} \label{supp:eq:ref:hatUXK}
		\hat{U}_{X,K} 
		\in
		\mathrm{argmax}_{ U \in \mathcal{O}_p }
		\sum_{i=1}^K
		||  
		\mathrm{diag}
		\left(
		U
		\hat{\bar{\mathrm{cov}}}_{X,K}^{-1/2}
		\hat{\mathrm{cov}}_{X,i}
		\hat{\bar{\mathrm{cov}}}_{X,K}^{-1/2}
		U'
		\right)
		||^2.
	\end{equation}
	We recall 
	\[
	\hat{W}_{X,K} =
	\hat{U}_{X,K} 
	\hat{\bar{\mathrm{cov}}}_{X,K}^{-1/2}.
	\]
	
	\begin{cor}\label{supp:cor:consistency:X}
		Assume that conditions  \ref{supp:cond:block:independence}, \ref{supp:cond:max:variance} and \ref{supp:cond:asymptotically:distinct:eigenvalues} hold.
		Then for any sequence $\hat{U}_{X,K}$ in \eqref{supp:eq:ref:hatUXK}, there exists a sequence $\hat{G}_K \in \mathcal{G}_p$ such that
		\[
		\hat{G}_K \hat{W}_{X,K}
		\underset{K \to \infty}{\overset{p}{\to}}
		A^{-1}.
		\] 
	\end{cor}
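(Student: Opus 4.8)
The plan is to deduce this corollary directly from the equivariance relation of Lemma~\ref{supp:lem:equivariance} together with the consistency result for $Z$ already established in Theorem~\ref{supp:thm:consistency}. The essential observation is that the NSS-JD procedure applied to $X_t = A Z_t$ is tied to the procedure applied to $Z_t$ through the exact algebraic identity $\hat{W}_{X,K} = \hat{W}_{Z,K} A^{-1}$, so that convergence of $\hat{W}_{Z,K}$ (up to a signed permutation) to $I_p$ transfers immediately to convergence of $\hat{W}_{X,K}$ to $A^{-1}$.

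Concretely, I would first fix an arbitrary sequence $\hat{U}_{X,K}$ maximizing \eqref{supp:eq:ref:hatUXK}, with associated $\hat{W}_{X,K} = \hat{U}_{X,K} \hat{\bar{\mathrm{cov}}}_{X,K}^{-1/2}$. By Lemma~\ref{supp:lem:equivariance}, there is a corresponding choice of $\hat{U}_{Z,K}$ in \eqref{supp:eq:ref:hatUZK}, with $\hat{W}_{Z,K} = \hat{U}_{Z,K} \hat{\bar{\mathrm{cov}}}_{Z,K}^{-1/2}$, such that
\[
\hat{W}_{X,K} = \hat{W}_{Z,K} A^{-1}.
\]
Next I would apply Theorem~\ref{supp:thm:consistency} to this sequence $\hat{U}_{Z,K}$, which yields a sequence $\hat{G}_K \in \mathcal{G}_p$ with $\hat{G}_K \hat{W}_{Z,K} \overset{p}{\to} I_p$ as $K \to \infty$.

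Finally, combining the two displays gives
\[
\hat{G}_K \hat{W}_{X,K}
= \hat{G}_K \hat{W}_{Z,K} A^{-1}
\overset{p}{\to}
I_p A^{-1} = A^{-1},
\]
where the convergence follows from the continuous mapping theorem, since right multiplication by the fixed matrix $A^{-1}$ is continuous. This completes the argument.

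There is no substantial obstacle in this corollary; it is a short bookkeeping step once the two supporting results are available. The only point requiring a little care is ensuring that the \emph{same} sequence $\hat{G}_K$ produced by Theorem~\ref{supp:thm:consistency} for $\hat{W}_{Z,K}$ is the one used for $\hat{W}_{X,K}$, which is exactly what the identity $\hat{W}_{X,K} = \hat{W}_{Z,K} A^{-1}$ guarantees: left multiplication by $\hat{G}_K$ acts on the common factor $\hat{W}_{Z,K}$ and leaves the fixed factor $A^{-1}$ untouched. All the genuine analytic work (uniform convergence of the diagonalization criterion, identifiability via Condition~\ref{supp:cond:asymptotically:distinct:eigenvalues}, control of the whitening matrix) has already been carried out in the $Z$-setting, so the extension to $X$ is purely equivariance-driven.
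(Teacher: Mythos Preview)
Your proof is correct and follows essentially the same approach as the paper: invoke Lemma~\ref{supp:lem:equivariance} to get $\hat{W}_{X,K} = \hat{W}_{Z,K} A^{-1}$, apply Theorem~\ref{supp:thm:consistency} to obtain $\hat{G}_K$ with $\hat{G}_K \hat{W}_{Z,K} \overset{p}{\to} I_p$, and conclude by right-multiplying by the fixed matrix $A^{-1}$.
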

	
	\begin{proof}
		Let $\hat{U}_{X,K}$ in \eqref{supp:eq:ref:hatUXK}. Then, from Lemma \ref{supp:lem:equivariance}, there exists $\hat{U}_{Z,K}$ in \eqref{supp:eq:ref:hatUZK} such that
		\[
		\hat{W}_{X,K} = \hat{W}_{Z,K} A^{-1}.
		\]
		Furthermore, let $\hat{G}_K \in \mathcal{G}_p$ be such that the conclusion of Theorem \ref{supp:thm:consistency} holds. We have
		\begin{align*}
			\hat{G}_K  \hat{W}_{X,K}
			& =
			\left( \hat{G}_K  \hat{W}_{Z,K} \right) A^{-1}
			\\
			& \overset{p}{\to} I_p A^{-1}
		\end{align*}
		as $K \to \infty$.
	\end{proof}
	
	\begin{cor} \label{supp:cor:CLT:X}
		Assume that conditions  \ref{supp:cond:block:independence}, \ref{supp:cond:max:variance} and \ref{supp:cond:asymptotically:distinct:eigenvalues} hold. Let $\Sigma_{\hat{W}_{X,K}}$ be the $p^2 \times p^2$ covariance matrix of the random matrix 
		\[
		M A^{-1},
		\]
		where $M$ is a $p \times p$ random matrix with $p^2 \times p^2$ covariance matrix $\Sigma_{\hat{W}_{Z,K}}$, with the notation of Theorem \ref{supp:thm:joint:CLT:Z}. 
		Then for any sequence $\hat{U}_{X,K}$ in \eqref{supp:eq:ref:hatUXK}, there exists a sequence $\hat{G}_K \in \mathcal{G}_p$ such that, with $Q_{\hat{W}_{X,K} }$ the distribution of $ K^{1/2} ( \hat{G}_K \hat{W}_{X,K} - A^{-1} )$, we have
		\[
		d_w 
		\left( 
		Q_{\hat{W}_{X,K} } , \mathcal{N}(0,\Sigma_{ \hat{W}_{X,K} }) 
		\right)
		\underset{K \to \infty}{\overset{}{\to}} 0.
		\]
		Furthermore, the matrix $\Sigma_{ \hat{W}_{X,K} }$ is bounded as $K \to \infty$.
	\end{cor}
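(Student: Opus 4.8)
The plan is to reduce everything to the already-established joint central limit theorem for $Z$ (Theorem \ref{supp:thm:joint:CLT:Z}) through the equivariance relation of Lemma \ref{supp:lem:equivariance}. First I would fix an arbitrary sequence $\hat{U}_{X,K}$ of maximizers in \eqref{supp:eq:ref:hatUXK}. By Lemma \ref{supp:lem:equivariance} there is a corresponding sequence $\hat{U}_{Z,K}$ of maximizers in \eqref{supp:eq:ref:hatUZK} with $\hat{W}_{X,K} = \hat{W}_{Z,K} A^{-1}$ holding exactly. Applying Theorem \ref{supp:thm:joint:CLT:Z} to this $\hat{U}_{Z,K}$ then furnishes a sequence $\hat{G}_K \in \mathcal{G}_p$ with $d_w( Q_{\hat{W}_{Z,K}} , \mathcal{N}(0, \Sigma_{ \hat{W}_{Z,K} }) ) \to 0$, where $Q_{\hat{W}_{Z,K}}$ is the law of $K^{1/2}( \hat{G}_K \hat{W}_{Z,K} - I_p )$.

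I would then exploit the exact identity
\[
K^{1/2}( \hat{G}_K \hat{W}_{X,K} - A^{-1} ) = K^{1/2}( \hat{G}_K \hat{W}_{Z,K} - I_p ) A^{-1},
\]
which follows from $\hat{W}_{X,K} = \hat{W}_{Z,K} A^{-1}$. Writing $\phi$ for the fixed linear bijection of $\mathbb{R}^{p^2}$ induced by right multiplication by $A^{-1}$ on vectorized $p \times p$ matrices, this identity states that $Q_{\hat{W}_{X,K}} = \phi_{\#} Q_{\hat{W}_{Z,K}}$, and, by the very definition of $\Sigma_{ \hat{W}_{X,K} }$ as the covariance of $M A^{-1}$, also $\mathcal{N}(0, \Sigma_{ \hat{W}_{X,K} }) = \phi_{\#} \mathcal{N}(0, \Sigma_{ \hat{W}_{Z,K} })$. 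Boundedness of $\Sigma_{ \hat{W}_{X,K} }$ is immediate at this point, since $\Sigma_{ \hat{W}_{X,K} } = \phi \Sigma_{ \hat{W}_{Z,K} } \phi'$ with $\phi$ fixed and $\Sigma_{ \hat{W}_{Z,K} }$ bounded by Theorem \ref{supp:thm:joint:CLT:Z}.

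The only genuinely delicate step — and thus the main obstacle — is transporting the convergence $d_w \to 0$ through the pushforward $\phi_{\#}$, complicated by the fact that the Gaussian targets $\mathcal{N}(0, \Sigma_{ \hat{W}_{Z,K} })$ drift with $K$, which is precisely why the statement is phrased with $d_w$ rather than ordinary convergence in distribution. Since $\phi_{\#}$ need not be a $d_w$-contraction in general, I would argue by subsequences. Given any subsequence, the boundedness of $\Sigma_{ \hat{W}_{Z,K} }$ lets me extract a further subsequence along which $\Sigma_{ \hat{W}_{Z,K} } \to \Sigma_{\ast}$ for some limit $\Sigma_{\ast}$; then $\mathcal{N}(0, \Sigma_{ \hat{W}_{Z,K} }) \to \mathcal{N}(0, \Sigma_{\ast})$ weakly, and hence $Q_{\hat{W}_{Z,K}} \to \mathcal{N}(0, \Sigma_{\ast})$ weakly as well, because $d_w$ metrizes weak convergence. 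The continuous mapping theorem applied to the continuous map $\phi$ shows that both $\phi_{\#} Q_{\hat{W}_{Z,K}} = Q_{\hat{W}_{X,K}}$ and $\phi_{\#} \mathcal{N}(0, \Sigma_{ \hat{W}_{Z,K} }) = \mathcal{N}(0, \Sigma_{ \hat{W}_{X,K} })$ converge weakly to the common limit $\mathcal{N}(0, \phi \Sigma_{\ast} \phi')$, so that $d_w( Q_{\hat{W}_{X,K}} , \mathcal{N}(0, \Sigma_{ \hat{W}_{X,K} }) ) \to 0$ along that subsequence. As every subsequence admits such a further subsequence converging to $0$, the full sequence converges to $0$, which is exactly the claim.
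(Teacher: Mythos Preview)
Your proposal is correct and follows the same approach as the paper: invoke Lemma \ref{supp:lem:equivariance} to write $\hat{W}_{X,K}=\hat{W}_{Z,K}A^{-1}$, apply Theorem \ref{supp:thm:joint:CLT:Z} to obtain $\hat{G}_K$, and then use the exact identity $K^{1/2}(\hat{G}_K\hat{W}_{X,K}-A^{-1})=K^{1/2}(\hat{G}_K\hat{W}_{Z,K}-I_p)A^{-1}$. The paper's own proof simply asserts at this point that the right-hand side ``follows the asymptotic Gaussian distribution given in the statement,'' whereas you supply a careful subsequence argument (via boundedness of $\Sigma_{\hat{W}_{Z,K}}$ and the continuous mapping theorem) to justify that the $d_w$-convergence with moving Gaussian targets is preserved under the fixed linear pushforward; this extra step is legitimate and arguably fills a small gap the paper leaves implicit.
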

	\begin{proof}
		The matrix $\Sigma_{ \hat{W}_{X,K} }$ is bounded because $A^{-1}$ is fixed and the matrix $\Sigma_{ \hat{W}_{Z,K} }$ is bounded.
		Let $\hat{U}_{X,K}$ in \eqref{supp:eq:ref:hatUXK}. Then, from Lemma \ref{supp:lem:equivariance}, there exists $\hat{U}_{Z,K}$ in \eqref{supp:eq:ref:hatUZK} such that
		\[
		\hat{W}_{X,K} = \hat{W}_{Z,K} A^{-1}.
		\]
		Furthermore, let $\hat{G}_K \in \mathcal{G}_p$ be such that the conclusion of Theorem \ref{supp:thm:joint:CLT:Z} holds. We have
		\begin{align*}
			\sqrt{K} \left( \hat{G}_K  \hat{W}_{X,K}
			- A^{-1} \right)
			& =
			\sqrt{K} \left( \hat{G}_K  \hat{W}_{Z,K}  A^{-1}
			- A^{-1} \right)
			\\
			& = 
			\sqrt{K} \left( \hat{G}_K  \hat{W}_{Z,K} - I_p \right) A^{-1}.
		\end{align*}
		This last quantity follows the asymptotic Gaussian distribution given in the statement of the corollary, which concludes the proof.
	\end{proof}

	\section{Extension to non-zero mean and to empirical centering}
	\label{supp:section:non-zero:mean}
	
	In this section, we consider that $Z$ has a non-zero mean function, that is constant within the time blocks (Condition~\ref{cond:mean:function}).
	\begin{cond} \label{supp:cond:mean:function}
		For any $i \in \{1,\ldots,K\}$ and $j \in \{1,\ldots,s\}$,  the mean vector of $Z_{(i-1)s+j}$ depends only on $i$ and is written $m_i$.
	\end{cond}
	
	In this section, we let, for $i = 1 ,\ldots , K$,
	\[
	\bar{Z}_i
	=
	\frac{1}{s}
	\sum_{j=1}^s
	Z_{(i-1)s+j}.
	\] 
	We also let $\bar{Z}_i = (\bar{Z}_i^{(1)} , \ldots , \bar{Z}_i^{(p)})'$.
	We let for $i \in \{1,\ldots,K\}$,
	\begin{equation} \label{supp:eq:hat:cov:Z:i:centered}
		\hat{\mathrm{cov}}_{Z,i}
		= \frac{1}{s} \sum_{j=1}^s ( Z_{s(i-1)+j} - \bar{Z}_i ) (  Z_{s(i-1)+j} - \bar{Z}_i )' 
	\end{equation}
	and 
	\[
	\mathrm{cov}_{Z,i}
	= \mathbb{E} \left(  \hat{\mathrm{cov}}_{Z,i} \right)
	=
	\frac{1}{s} \sum_{j=1}^s
	\mathrm{Cov}( Z_{s(i-1)+j} - \bar{Z}_i  ),
	\]
	from Condition \ref{supp:cond:mean:function}.
	We also let
	\[
	\bar{\mathrm{cov}}_{Z,K} = \frac{1}{K} \sum_{i=1}^K \mathrm{cov}_{Z,i}
	\]
	and
	\[
	\hat{\bar{\mathrm{cov}}}_{Z,K} = \frac{1}{K} \sum_{i=1}^K \hat{\mathrm{cov}}_{Z,i}.
	\]
	We recall that we assume $\bar{\mathrm{cov}}_{Z,K}  = I_p$, which can always be done by multiplying each component $Z^{(k)}$ by a constant. This is necessary to obtain the identifiability of $A^{-1}$ up to permutations and sign changes of the rows.
	
	We define $\hat{\mathrm{cov}}_{X,i}$, $\mathrm{cov}_{X,i}$, $\bar{\mathrm{cov}}_{X,K}$ and $\hat{\bar{\mathrm{cov}}}_{X,K}$ similarly as $\hat{\mathrm{cov}}_{Z,i}$, $\mathrm{cov}_{Z,i}$, $\bar{\mathrm{cov}}_{Z,K}$ and $\hat{\bar{\mathrm{cov}}}_{Z,K}$ but with $Z$ replaced by $X$.
	Then, $\hat{U}_{Z,K}$, $\hat{W}_{Z,K}$, $\hat{U}_{X,K}$ and $\hat{W}_{X,K}$ are defined as in Section \ref{supp:section:setting:notation}, but with the new definitions of $\hat{\mathrm{cov}}_{X,i}$, $\hat{\bar{\mathrm{cov}}}_{X,K}$, $\hat{\mathrm{cov}}_{Z,i}$ and $\hat{\bar{\mathrm{cov}}}_{Z,K}$ given here.

	We still assume that Condition \ref{supp:cond:block:independence} holds. We assume that the following condition holds, which is a minor change to Condition \ref{supp:cond:max:variance}.
	
	\begin{cond} \label{supp:cond:max:variance:bis}
		We have
		\[
		\sup_{i \in \mathbb{N}}
		\max_{ j=1,...,p }
		\mathrm{Var}
		\left(
		Z_{i}^{(j)}
		\right)
		\leq
		C_{sup}.
		\]
	\end{cond}
	
	We also update some notation from Lemma \ref{supp:lem:calcul:esp:cost}. We let 
	for $i=1,...,K$ and $a,b=1,...,s$, $D_{Z,i}^{(a,b)}$ be the $p \times p$ diagonal matrix defined by
	\[
	\left[
	D_{Z,i}^{(a,b)}
	\right]_{k,k}
	=
	\mathbb{E}
	\left(
	(Z_{(i-1)s+a}^{(k)} - \bar{Z}^{(k)}_i)
	(Z_{(i-1)s+b}^{(k)} - \bar{Z}^{(k)}_i) 
	\right).
	\]
	
	We assume that Condition \ref{supp:cond:asymptotically:distinct:eigenvalues} holds, with the new definition of $\mathrm{cov}_{Z,i}$.
	We let $H$ be defined as in Lemma \ref{supp:lem:non:zero:hessian} but with respect to the new definition of 
	$\mathrm{cov}_{Z,i}$ and $D_{Z,i}^{(a,b)}$.
	
	Under these assumptions, the consistency and the central limit theorem for $\hat{W}_{Z,K}$ and $\hat{W}_{X,K}$ can be extended from the zero-mean and no-centering case, to the case of Condition \ref{supp:cond:mean:function} and of \eqref{supp:eq:hat:cov:Z:i:centered}.
	
	\begin{thm} \label{supp:thm:CLT:non:zero:mean}
		Under the conditions of Section \ref{supp:section:non-zero:mean}, the same conclusions as in Theorems \ref{supp:thm:consistency} and \ref{supp:thm:joint:CLT:Z} and in Corollaries \ref{supp:cor:consistency:X} and \ref{supp:cor:CLT:X} hold, where the definitions of $\Sigma_{\hat{W}_{Z,K}}$ and $\Sigma_{\hat{W}_{X,K}}$ are updated according to the new definitions of $\hat{\mathrm{cov}}_{Z,i}$, $\mathrm{cov}_{Z,i}$ and $D_{Z,i}^{(a,b)}$.
	\end{thm}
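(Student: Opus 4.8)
The plan is to reduce the non-zero-mean, empirically centered setting to the zero-mean, non-centered setting already treated, by passing to the block-wise centered process. Define, for $t$ in the $i$-th block, $W_t = Z_t - \bar{Z}_i$. By Condition~\ref{supp:cond:mean:function} the mean $m_i$ is constant across the block, so it cancels in $W_t = (Z_t - m_i) - (\bar{Z}_i - m_i)$; hence $W$ is a zero-mean, jointly Gaussian process, and the empirically centered block covariance of $Z$ in \eqref{supp:eq:hat:cov:Z:i:centered} coincides exactly with the \emph{non-centered} block second-moment matrix of $W$,
\[
\hat{\mathrm{cov}}_{Z,i} = \frac{1}{s} \sum_{j=1}^s W_{s(i-1)+j} W_{s(i-1)+j}'.
\]
Consequently $\hat{U}_{Z,K}, \hat{W}_{Z,K}$ (and, since $\bar{X}_i = A \bar{Z}_i$ yields $\hat{\mathrm{cov}}_{X,i} = A \hat{\mathrm{cov}}_{Z,i} A'$ so that the equivariance of Lemma~\ref{supp:lem:equivariance} persists, also $\hat{U}_{X,K}, \hat{W}_{X,K}$) computed here are literally the estimators of the simplified setting applied to $W$. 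It therefore suffices to check that $W$ satisfies the hypotheses of Section~\ref{supp:section:setting:notation} and to invoke the results already proved there.

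First I would verify these hypotheses for $W$. Joint Gaussianity and cross-component independence (Condition~\ref{cond:gaussian:independent}) are inherited, because the centering is performed componentwise and thus $W^{(k)}$ is a function of $Z^{(k)}$ alone. The finite dependence Condition~\ref{supp:cond:block:independence} holds for $W$ with the same $L$, since $W_t$ for $t$ in block $i$ is a function of the $Z$-values in block $i$ only. The bounded second-moment Condition~\ref{supp:cond:max:variance} for $W$ follows from the bounded variance Condition~\ref{supp:cond:max:variance:bis} for $Z$: once the mean cancels, $\mathbb{E}[(W_t^{(k)})^2]$ is a fixed quadratic form (in the fixed dimension $s$) of the within-block covariances of the centered $Z^{(k)}$, each bounded by $C_{sup}$ via Cauchy--Schwarz. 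Finally, with the updated definitions of this section, $\mathrm{cov}_{Z,i} = \mathbb{E}[\hat{\mathrm{cov}}_{Z,i}]$ and $D_{Z,i}^{(a,b)}$ are precisely the population block covariance and cross-lag second-moment matrices of $W$, so Condition~\ref{supp:cond:asymptotically:distinct:eigenvalues} is the verbatim statement of that condition for $W$.

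With all hypotheses in force for $W$, I would apply Theorem~\ref{supp:thm:consistency} and Theorem~\ref{supp:thm:joint:CLT:Z} to obtain the consistency and asymptotic normality of $\hat{W}_{Z,K}$ towards $I_p$, and then Corollaries~\ref{supp:cor:consistency:X} and \ref{supp:cor:CLT:X}, whose proofs use only the equivariance $\hat{W}_{X,K} = \hat{W}_{Z,K} A^{-1}$ recalled above, to obtain the analogous statements for $\hat{W}_{X,K}$ towards $A^{-1}$. Since the limiting covariance matrices $\Sigma_{\hat{W}_{Z,K}}$ and $\Sigma_{\hat{W}_{X,K}}$ are assembled (through $\Sigma_{\nabla}$, $\Sigma_{\hat{U}}$, the $H_{e,f}$, and $\Sigma_{\hat{\bar{\mathrm{cov}}}_{Z,K}}$) entirely from $\mathrm{cov}_{Z,i}$ and $D_{Z,i}^{(a,b)}$, feeding in the centered versions reproduces exactly the updated matrices claimed in the statement.

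The only genuinely delicate point, and the one where Condition~\ref{supp:cond:mean:function} is indispensable, is the moment bound in the second step: it is the block-constancy of the mean that makes $m_i$ cancel exactly in $W_t$, leaving a zero-mean process whose raw second moments are controlled by the variance of $Z$ alone. Without block-constant means, empirical centering would leave residual, and possibly unbounded, mean contributions, so that Condition~\ref{supp:cond:max:variance} for $W$ could fail and the moment bounds underlying Lemma~\ref{supp:lem:basic:bounds} would break down. The remaining steps are routine bookkeeping confirming that the reduction to $W$ is exact.
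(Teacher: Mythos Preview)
Your proposal is correct and follows essentially the same approach as the paper: reduce to the zero-mean, non-centered setting by passing to the block-wise centered process (which the paper calls $Y$, you call $W$), observe that the empirically centered block covariances of $Z$ are precisely the raw block second-moment matrices of this centered process, and then invoke the already-proved results. If anything, you are more thorough than the paper, which simply asserts ``one can check that they satisfy the conditions,'' whereas you explicitly verify each hypothesis and correctly isolate where Condition~\ref{supp:cond:mean:function} is essential.
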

	\begin{proof}
		We consider the multivariate time series $Y$ and $W$, defined by, for $i \in \{1,\ldots,K\}$ and $j\in \{1,\ldots,s\}$
		\[
		Y_{(i-1)s+j} = Z_{(i-1)s + j} - \bar{Z}_i
		\]  
		and 
		\[
		W = A Y.
		\]
		We remark that for $i \in \{1,\ldots,K\}$ and $j \in \{ 1, \ldots , s \}$ we have
		\[
		W_{(i-1)s+j}
		=
		X_{(i-1)s+j}
		-
		\bar{X}_i,
		\]
		where we let 
		\[
		\bar{X}_i
		=
		\frac{1}{s}
		\sum_{j=1}^s
		X_{(i-1)s+j}. 
		\]
		These multivariate time series have mean zero from Condition \ref{supp:cond:mean:function} and they are Gaussian. One can check that they satisfy the conditions of Theorems \ref{supp:thm:consistency} and \ref{supp:thm:joint:CLT:Z}. Furthermore, we have
		\[
		\hat{\mathrm{cov}}_{Z,i}
		= \frac{1}{s} \sum_{j=1}^s Y_{s(i-1)+j}  Y_{s(i-1)+j}' 
		\]
		and 
		\[
		\mathrm{cov}_{Z,i}
		=
		\frac{1}{s} \sum_{j=1}^s 
		\mathbb{E}
		\left(
		Y_{s(i-1)+j}  Y_{s(i-1)+j} ' 
		\right).
		\]
		Hence, the conclusion of Theorems \ref{supp:thm:consistency} and \ref{supp:thm:joint:CLT:Z} and of Corollaries \ref{supp:cor:consistency:X} and \ref{supp:cor:CLT:X} applied to $Y$ and $W$ imply Theorem \ref{supp:thm:CLT:non:zero:mean}.
	\end{proof}

\end{document}